\documentclass[reqno]{amsart} 
\usepackage{amssymb}
\usepackage{amsmath}
\usepackage{mathtools}
\usepackage{epic}
\usepackage{accents}
\usepackage{enumitem, dsfont}
\usepackage{comment, csquotes}
\usepackage{physics}
\usepackage{mathrsfs}
\newtheorem{maintheorem}{Theorem}

\newtheorem{theorem}{Theorem}[section]
\newtheorem{lemma}[theorem]{Lemma}
\newtheorem{conjecture}[theorem]{Conjecture}
\newtheorem{question}[theorem]{Question}
\newtheorem{proposition}[theorem]{Proposition}

\newtheorem{corollary}[theorem]{Corollary}

\newtheorem{definition}[theorem]{Definition}

\newtheorem{remark}[theorem]{Remark}

\DeclareSymbolFont{bbold}{U}{bbold}{m}{n}
\DeclareSymbolFontAlphabet{\mathbbold}{bbold}

\def \C{\mathbb{C}}
\def \Z{\mathbb{Z}}
\def \Q{\mathbb{Q}}
\def \N{\mathbb{N}}
\def \R{\mathbb{R}}
\def \K{\mathbb{K}}

\def \ge{\geqslant}
\def \le{\leqslant}

\newcommand{\cC}{\mathcal{C}}

\newcommand{\cF}{\mathcal{F}}
\newcommand{\cG}{\mathcal{G}}

\newcommand{\an}{\mathrm{an}}

\newcommand{\cCexp}{\cC^{\mathrm{exp}}}

\renewcommand{\Re}{\operatorname{Re}}

\title[Approximate suprema of power-constructible functions]{Approximating parametric suprema for constructible and power-constructible functions}

\author[Buggenhout]{Tijs Buggenhout}
\address{Department of Mathematics, KU Leuven, Belgium}
\email{tijs.buggenhout@kuleuven.be }

\author[Stout]{Mathias Stout}	
\address{Department of Mathematics and Statistics, McMaster University, Canada \&
	The Fields Institute, Canada}
\email{stoutm1@mcmaster.ca}

\author[Vandebrouck]{Lisa Vandebrouck}
\address{Department of Mathematics, KU Leuven, Belgium}
\email{lisa.vandebrouck@kuleuven.be}

\begin{document}
	
	\begin{abstract}
		We prove that one may approximate parametric suprema of constructible and power-constructible functions using functions within the same class.
		This resolves a conjecture by Adiceam and Cluckers, which was posited in a paper by Adiceam and Marmon (2023) after studying a question posed by Sarnak (1997).
		We apply our result to prove that a certain subclass of $\mathcal{C}^{\exp}$-class distributions is tempered, partially answering a question by Aizenbud, Cluckers, Raibaut and Servi (2023), and to make uniform a bound concerning pushforward measures, as mentioned in work by Glazer, Hendel and Sodin (2024).
%
	\end{abstract}

	\maketitle
	

	

\section{Introduction}

	In real analysis, functions can classically be very pathological despite having several desirable properties.
	Nevertheless, in applications many of the studied functions behave rather tamely.
	Through the use of more specialized tools, like o-minimal geometry and real analytic geometry, stronger results can be acquired.
	Such geometries are generally called \emph{tame}.
	Currently, tame geometry is successfully being applied in other areas of mathematics and even physics; see for example Frésan's exposition on the use of o-minimality in Hodge theory \cite{F2020} or Grimm's recent work in physics, like \cite{G2026}.
	
	Our focus will be on the theory of \textit{constructible functions}, which arise as integrals of globally subanalytic functions.
	This study was started by Lion and Rolin in 1998 \cite{LR1998} and further expanded upon by, among others, Cluckers, Comte, Lion, Miller, Rolin (e.g. in \cite{CLR2000}, \cite{CM2011}, \cite{CM2013}).	
	The development of the theory of constructible functions happened in parallel with a non-archimedean analogue which was started by Denef in 1984 \cite{D1984} and expanded upon by, among others, Cluckers, Gordon and Halupczok (e.g. in \cite{CGH2014}, \cite{CGH2018}, \cite{CH2018}).	
	In both the real and the $p$-adic case, the main distinguishing property of the algebra of constructible functions is its closedness under parametric integration, as shown in \cite{CM2011}, \cite{CM2012}, \cite{CL2008}.
	
  In fact, we will be looking at an algebra of functions which are a bit more general, called \emph{power-constructible} functions, as introduced in \cite{CCRS24}.
  For a given subfield $\K \subset  \C$, we roughly consider the algebra of constructible functions enriched by the maps $x \mapsto x^r$ for $r \in \K$.
  In the same paper, it is proven that this algebra is stable under parametric integration as well.
  
In this paper, we will show that one can approximate parametric suprema of $\K$-constructible functions using other $\K$-constructible functions, in a suitable sense.
A $p$-adic analogue of this result has been shown for the algebra of constructible functions in \cite{CGH2018}. Studying approximate suprema is inherently difficult, even for Presburger-definable functions (\cite[Lem.\,2.2.3,2.2.4]{CGH2018}).
Note that we can not get around this by using definability in $\mathbb{R}_{\an, \exp}$, as this would leave the studied class. 
Moreover, if $\K \not \subset \R$, then functions of $C^\K$-class do not even live in any o-minimal structure.
\begin{maintheorem}\label{th:main_intro}
		Let $\K \subset \C$ be a subfield and $\mathcal{C}^{\K}(Y)$ be the algebra of $\K$-power constructible functions $Y \to \R$, where $Y \subset \R^{m+n}$ is subanalytic.
		Let $X=\pi_m(Y)$ be the projection of $Y$ to the first $m$ coordinates.

		Let $f \in \cC^\K(Y)$ and suppose that for any $x$, the map $f_x: Y_x \to \R: y \mapsto f(x,y)$ is bounded (where $Y_x=\{y \in \R^n \mid (x,y) \in Y\}$).
		Then there exist finitely many maps $g_1, g_2, ..., g_k \in \mathcal{C}^{\K}(X)$ and a constant $C>0$ such that
		
		\[\frac1C \max\{\abs{g_1(x)}, ..., \abs{g_k(x)}\} \le \sup_{y \in Y_x} \abs{f(x,y)} \le C \max\{\abs{g_1(x)}, ..., \abs{g_k(x)}\}\]
		
		for all $x \in X$.
		
		Moreover, if $\K \subset \R$, then there exist subanalytic maps $a_1, ..., a_k$ such that we may take $g_i=f \circ a_i$.
		
	\end{maintheorem}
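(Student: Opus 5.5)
We argue by induction on $n$, the number of variables we take the supremum over. The case $n=1$ carries the substance: we take the supremum over one variable at a time, via $\sup_{(y',y_n)}\abs{f}=\sup_{y'}\sup_{y_n}\abs{f}$. This requires the conclusion for $n=1$ to produce functions again in $\cC^\K$. It also requires that a maximum of finitely many $\abs{g_i}$ be handled. For that we partition $X$ into subanalytic pieces on which one $\abs{g_i}$ dominates the others, and glue via characteristic functions; the pieces are subanalytic because $\abs{g_i}\ge\abs{g_j}$ is comparable up to constants to a subanalytic condition, using the preparation below. At the next step we then apply the induction hypothesis to a single function per piece. The constants multiply, which is harmless.

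For $n=1$, we use the preparation theorem for $\cC^\K$-functions from \cite{CCRS24}, the power-constructible analogue of Lion--Rolin/Cluckers--Miller preparation. After a finite subanalytic partition of $Y$ into cells $A$ over $X$ with centers $\theta(x)$, it writes
\[
f(x,y)=\sum_{j} T_j(x)\,\abs{y-\theta(x)}^{r_j}\,(\log\abs{y-\theta(x)})^{\ell_j}\,u_j(x,y),
\]
with $T_j\in\cC^\K(X)$, exponents $r_j\in\K$, integers $\ell_j$, and subanalytic units $u_j$ bounded between positive constants. On each cell the fibre is an interval $(a(x),b(x))$ in the variable $t=\abs{y-\theta(x)}$. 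The key estimate to prove is a one-variable statement. For a finite sum $F(t)=\sum_j c_j t^{r_j}(\log t)^{\ell_j}$ with complex exponents, bounded on $(a,b)$,
\[
\sup_{t\in(a,b)}\abs{F(t)} \asymp \max_j \max\bigl(\abs{c_j}\,a^{\Re r_j}\abs{\log a}^{\ell_j},\ \abs{c_j}\,b^{\Re r_j}\abs{\log b}^{\ell_j}\bigr),
\]
with constants uniform in the parameters. We may need to further refine the cell first, for instance so that $t$ stays away from $1$ or its logarithm has fixed size regime. Each term on the right is a $\cC^\K$ function of $x$. The needed moduli are obtained from $\abs{T_j}$ by taking $\Re$ and $\Im$, which are in $\cC^\K$ when $\K$ is closed under conjugation; otherwise we work with $\abs{\cdot}^2$ and take square roots only up to constants. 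Boundedness of $f_x$ forces the dangerous terms, those with the wrong sign of $\Re r_j$ at an endpoint $0$ or $\infty$, to have vanishing total coefficient. This ensures the endpoint values are finite.

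The main obstacle is the lower bound in that one-variable estimate. Cancellation among terms with distinct exponents, especially imaginary parts of $r_j$ producing oscillation $t^{i\beta}$, must not make $\sup\abs{F}$ much smaller than the largest coefficient times endpoint size. The approach is a Vandermonde/linear-independence argument. The functions $t^{r_j}(\log t)^{\ell_j}$, after rescaling $t=a s$ (or $t=bs$) with $s$ on a fixed compact interval like $[1,2]$, form a family whose coefficient vectors are controlled by sup norms on $[1,2]$. This uses finite-dimensionality together with compactness over the exponents, which are fixed, and over $\log a$ after normalizing the $\log$ factors. If $(a,b)$ is too short to contain $[a,2a]$, we instead refine the cell so that $b/a$ is bounded, and use a Taylor/mean-value argument. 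The upper bound is easier: away from the endpoints the terms are monotone-in-modulus up to constants, so the endpoint values dominate.

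For the ``moreover'' part, when $\K\subset\R$, definable choice in the o-minimal structure $\R_{\an,\exp}$ applies: $f$ and the sets $\{(x,y):\abs{f(x,y)}\ge \tfrac1{2}\abs{g_i(x)}\}$ are definable there. However, we need subanalytic witnesses. We instead take $a_i(x)$ to be the subanalytic point $\theta(x)\pm$ a fixed multiple of the endpoint $a(x)$ or $b(x)$ realizing the maximum in the compact-interval argument, e.g. $a_i(x)=\theta(x)+\lambda a(x)$ for a suitable $\lambda\in[1,2]$ chosen from a finite list. Then $\abs{f(x,a_i(x))}\gtrsim\abs{g_i(x)}$ on a further subanalytic partition. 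This is exactly where real exponents avoid oscillation and make a finite list of $\lambda$ suffice.
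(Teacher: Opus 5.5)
Your proposal has a genuine gap: the lower bound in your key one-variable estimate is false in general, and the window mechanism you propose for it cannot work. (The upper half is harmless.)

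After rescaling $t=as$ and normalizing by the top power of $\log a$, the functions $s^{r}(1+\lambda\log s)^{\ell}$, with $\lambda=1/\log a$, become linearly dependent at $\lambda=0$. This happens whenever several terms share the exponent $r$ with different powers of the logarithm, which is already the generic situation for $\K=\Q$. So compactness in $\lambda$ gives no uniform constant, and the real obstacle is cancellation within one exponent class, not between distinct exponents.

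Worse, the supremum can sit far from both endpoints. Take $f(x,y)=\log y\,(\log x-\log y)$ on $\{e<y<x\}$. Then $\sup_y\abs{f(x,y)}=(\log x)^2/4$, attained at $y=\sqrt{x}$. But $\abs{f(x,y)}\le(1+\log 2)\log x$ on $[e,2e]\cup[x/2,x]$, because the two terms cancel to leading order at $y=x$. Hence neither endpoint windows nor witnesses of the form $\theta(x)+\lambda a(x)$ (or $\theta(x)+\lambda b(x)$) can give the lower bound or the ``moreover'' part. With several parameters even your displayed estimate fails as stated: $\log y-\log x_1$ on $\{x_1<y<x_1x_2\}$ has supremum $\log x_2$, while your right-hand side is about $\log x_1+\log x_2$.

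The missing idea is to isolate the part with purely imaginary exponents, $\sum c_{\alpha\gamma}y^{\alpha i}(\log y)^{\gamma}$, and measure it in the logarithmic variable $u$ given by $y=a(b/a)^u$. Its supremum is comparable to the norm of the re-expanded coefficient vector $p(c)$ of Lemma~\ref{le:uniformUnboundedZero}. Up to constants it is attained in the middle range $[a^{3/4}b^{1/4},a^{1/4}b^{3/4}]$. For non-real exponents you need an averaging argument over long intervals (Riemann--Lebesgue) to decouple the frequencies. In the real case the witnesses must be interior points $a^{j/M}b^{1-j/M}$ (Lemma~\ref{le:uniformUnboundedZeroconstr}).

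A second gap: your estimate is for constant coefficients, but your prepared form carries $y$-dependent units $u_j(x,y)$. Being bounded between positive constants does not prevent cancellation between different terms, so no coefficient-level statement transfers to $f$. The paper handles this by expanding the units as power series in $x_{>l}^{q_1}y^{-1}$ and $x_{>l}^{q_2}y$ and regrouping (Theorem~\ref{th:PrepUnbalanced power constr}). On $C_{N,\varepsilon}$ this gives $f=f^-+f^0+f^+$. Here $f^0$ has genuinely $y$-independent coefficients, and $f^{\pm}$ are monomials times factors $1+O(z)$, close enough to $1$ to be absorbed (Lemma~\ref{le:uniformUnboundedNeg}). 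The strips $y<N$ and $y>a(x)/N$ become balanced cells. There your unspecified ``Taylor/mean-value argument'' is replaced by Lemma~\ref{veralgemening 4.12 Denef}, which yields fiberwise linearly independent $u^i(1+V_i)$, together with compactness and an o-minimal uniform bound on the number of zeros (Lemma~\ref{le:uniformBounded}).

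Finally, in the induction step you do not need, and in general cannot get, subanalytic pieces on which one $\abs{g_i}$ dominates up to constants. For example, no subanalytic $P$ satisfies $\{x_1\ge 2\log x_2\}\subseteq P\subseteq\{x_1\ge \frac12\log x_2\}$, since the boundary of its fibres would be a subanalytic function comparable to $\log x_2$. Simply use $\sup_{y'}\max_i\abs{g_i}=\max_i\sup_{y'}\abs{g_i}$ and apply the induction hypothesis to each $g_i$.
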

		
	
	The main motivation for our research question comes from a conjecture by Adiceam and Cluckers, see~\cite[Conj.\,6.9]{AM2023} (to appear in~\cite{AM2025}). 
	Corollary \ref{conj cluckers adiceam} below resolves it in the affirmative.
%
	\begin{corollary} \label{conj cluckers adiceam}
		Let $F: X \subset (0, +\infty) \times \R^n \to \R_{\ge 0}: (\varepsilon, x) \mapsto F(\varepsilon, x)$ be a non-negative constructible function such that $F_\varepsilon$ is bounded above for all sufficiently small $\varepsilon$.
		Then there exist a rational number $r$, an integer $l$ and a constant $C>0$ such that
		
		$$\frac1C \varepsilon^r \abs{\log \varepsilon}^l \le \sup_{x\in X_\varepsilon} F(x, \varepsilon) \le C \varepsilon^r \abs{\log \varepsilon}^l$$
		
		for all sufficiently small $\varepsilon$.
	\end{corollary}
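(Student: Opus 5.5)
We deduce the corollary from Theorem \ref{th:main_intro} applied with $\K=\Q$, where $\cC^{\Q}$ reduces to the usual constructible functions. First we restrict to the parameters where $F_\varepsilon$ is bounded. There is $\varepsilon_0>0$ such that $F_\varepsilon$ is bounded above for $0<\varepsilon<\varepsilon_0$. Put $Y=X\cap((0,\varepsilon_0)\times\R^n)$, which is still subanalytic, and let $f=F|_Y$, which is constructible. Since $F\ge 0$, we have $\sup_{x\in Y_\varepsilon}\abs{f(\varepsilon,x)}=\sup_{x\in X_\varepsilon}F(\varepsilon,x)$. Theorem \ref{th:main_intro} then gives constructible $g_1,\dots,g_k$ on $\pi_1(Y)\subset(0,\varepsilon_0)$ and a constant $C$ with
\[
\tfrac1C\max_i\abs{g_i(\varepsilon)}\le \sup_{x} F(\varepsilon,x)\le C\max_i\abs{g_i(\varepsilon)}.
\]
If $\pi_1(Y)$ does not contain an interval $(0,\delta)$, then by o-minimality of subanalytic sets in one variable it avoids some $(0,\delta)$. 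In that case the statement is vacuous on small $\varepsilon$, or we interpret it on the fibre domain. So assume $(0,\delta)\subset\pi_1(Y)$.

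Next we use the one-variable structure of constructible functions. A constructible function of one variable $\varepsilon$ on $(0,\delta)$, after shrinking $\delta$, has an asymptotic expansion of the form
\[
g(\varepsilon)=\sum_{j} c_j\,\varepsilon^{r_j}\abs{\log\varepsilon}^{l_j}\,u_j(\varepsilon),
\]
with finitely many terms, $r_j\in\Q$, $l_j\in\N$, and $u_j$ subanalytic units. This holds because subanalytic functions of one variable are given by convergent Puiseux series near $0$, and constructible functions are sums of products of subanalytic functions and logs of subanalytic functions. Each $\log$ of a subanalytic function behaves like $a\log\varepsilon+(\text{analytic in }\varepsilon^{1/N})$. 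Collecting terms, each $g_i$ is either identically $0$ near $0$ or has a leading term $c\,\varepsilon^{r}\abs{\log\varepsilon}^{l}$ with $c\neq 0$, ordered lexicographically: smaller $r$ dominates, and for equal $r$ larger $l$ dominates. Hence $\abs{g_i(\varepsilon)}\asymp\varepsilon^{r_i}\abs{\log\varepsilon}^{l_i}$ for small $\varepsilon$.

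Finally, the maximum of finitely many such monomials is comparable to the lexicographically dominant one, $\varepsilon^{r}\abs{\log\varepsilon}^{l}$. Plugging this into the two-sided bound gives the corollary with $l\ge 0$. If all $g_i$ vanish near $0$, the supremum is $0$. Then no choice of $r$ and $l$ gives a valid lower bound, so this degenerate case must be excluded, and we note it explicitly.

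The main step needing care is the asymptotic expansion in the second paragraph, in particular that no cancellation leaves a non-monomial leading behaviour. The point is that the functions $\varepsilon^{r}\abs{\log\varepsilon}^{l}$ for distinct pairs $(r,l)$ are asymptotically independent. After grouping terms by exponent pair, the coefficient of each pair near $0$ is a subanalytic function of $\varepsilon$, hence either identically zero or asymptotic to $c\,\varepsilon^{s}$. Adding $s$ to $r$ and regrouping, which terminates by finiteness, yields a genuine leading monomial.
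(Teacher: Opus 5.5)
Your proposal is correct and follows essentially the paper's route. The paper applies Theorem~\ref{th:main}, then (via Corollary~\ref{cor:CA-almost} and the substitution $x \mapsto 1/x$) prepares the univariate constructible function $\sum_i \abs{g_i}$ at infinity to extract the lexicographically dominant monomial $x^{r}(\log x)^{l}$. You instead work directly at $0^+$ and expand each $g_i$ separately; this is equivalent. You are also right to flag the degenerate case where the supremum vanishes identically near $0$ (e.g.\ $F\equiv 0$), which the statement as written does not exclude.
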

We briefly outline the context of this conjecture. 		
Denote $F = (F_1, \cdots, F_p)$, where $\{F_1(x), \cdots, F_p(x)\}$ is a set of real homogeneous forms of the same degree on $\R^n$, with $p \geq 1$ and $n \geq 2$, and denote $\operatorname{Vol}_n$ for the Lebesgue measure on $\R^n$.
The following theorem is proven in ~\cite[App.\,1]{S97}. 

\begin{theorem} [Sarnak, \cite{S97}] \label{thm sarnak intro}
	Assume that:
	\begin{itemize}
		\item The set $\{x \in K: F(x) = 0\}$ does not lie in an $(n-p)$-dimensional linear subspace.
		\item  For all $x \in K$, the gradient vectors $\nabla F_1(x), \cdots, \nabla F_p(x)$ are linearly independent.
		\item $K$ is \enquote{nice}.
	\end{itemize}
	Then the following statement holds: there exists a $\delta > 0$, such that \begin{equation} \label{eq Sarnak}
		\#  \bigl\{\{x \in T \cdot K: \norm{F(x)} \leq T^{d-\alpha}\} \cap \Z^n\bigr\}  \ll T^{n-p\alpha} + T^{n-p-\delta} .\end{equation}
\end{theorem}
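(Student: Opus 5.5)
The plan is to compare the lattice point count with the volume of the region and to control the discrepancy by harmonic analysis. After rescaling $x = Ty$, the region becomes $\{y \in K : \norm{F(y)} \le T^{-\alpha}\}$ dilated by $T$, since each $F_i$ is homogeneous of degree $d$. Its volume is $T^n \operatorname{Vol}_n\{y\in K : \norm{F(y)}\le T^{-\alpha}\}$.

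\textbf{Step 1: the main term.} Because $\nabla F_1,\dots,\nabla F_p$ are linearly independent on $K$ (and on a neighbourhood of it, by compactness, which is part of $K$ being \enquote{nice}), the map $F\colon K\to\R^p$ is a submersion. The coarea formula then gives
\[
\operatorname{Vol}_n\{y\in K: \norm{F(y)}\le \varepsilon\}\asymp \varepsilon^p
\]
uniformly for small $\varepsilon$. This produces the main term $T^{n-p\alpha}$.

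\textbf{Step 2: smoothing and Poisson summation.} I would majorize the indicator of the region by a smooth weight
\[
w_T(x)=\rho(x/T)\,\phi\bigl(T^{\alpha-d}F(x)\bigr),
\]
with $\rho\in C_c^\infty$ close to $\mathbf 1_K$ and $\phi\ge \mathbf 1_{\{\norm{u}\le1\}}$ a bump on $\R^p$. By Poisson summation,
\[
\sum_{m\in\Z^n} w_T(m)=\sum_{\xi\in\Z^n}\widehat{w_T}(\xi).
\]
The term $\xi=0$ is $\ll T^{n-p\alpha}$ by Step 1. For $\xi\ne0$, I write $\phi$ through its Fourier transform:
\[
\widehat{w_T}(\xi)=T^{n}\int_{\R^p}\hat\phi(\eta)\int \rho(y)\, e\bigl(\eta\cdot T^{\alpha}F(y)-T\,\xi\cdot y\bigr)\,dy\,d\eta .
\]
So everything reduces to bounding oscillatory integrals with phase $\Phi(y)=\lambda\cdot F(y)-\xi\cdot y$.

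\textbf{Step 3: the oscillatory integrals.} Where $\nabla(\lambda\cdot F)$ stays away from $\xi$, repeated integration by parts gives rapid decay in $T\abs{\xi}$. This disposes of all large $\xi$ and of most small ones.

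The remaining contribution comes from $\eta$ near the scale where stationary points exist, that is, essentially from the surface measure on $\{F=0\}\cap K$. Its Fourier transform must be shown to decay like $\abs{\xi}^{-\kappa}$ for some $\kappa>0$. This is exactly where the non-degeneracy hypothesis enters: since $\{F=0\}\cap K$ is not contained in an $(n-p)$-dimensional linear subspace, the analytic variety is not flat. Hence some directional derivative of the phase, of bounded order, is non-vanishing, and van der Corput's lemma (or a stationary phase argument of finite type) gives a power saving.

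Summing these bounds over $\xi$, with $\phi$ chosen at an optimized scale, yields an error $T^{n-p-\delta}$ with $\delta>0$ depending on $\kappa$. A matching minorant would give the asymptotic, but only the upper bound is needed here.

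\textbf{Main obstacle.} I expect the hardest step to be Step 3: extracting a uniform power decay of the Fourier transform of the surface measure from the purely geometric condition \enquote{not lying in a linear subspace}. The plan is to use analyticity and homogeneity of $F$ to get a finite-type (non-flat) condition on $\{F=0\}\cap K$. The decay rate must then be uniform over $\lambda$ in compact sets and over directions $\xi/\abs{\xi}$, and it is this uniformity that makes the smoothing parameters balance to give a positive $\delta$.
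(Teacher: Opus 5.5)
The paper gives no proof of this statement; it is quoted from \cite[App.\,1]{S97}, with \enquote{nice} made precise by (P1)--(P2). Judged on its own terms, your Poisson-summation outline has a genuine gap in Step~3, exactly where the geometric hypothesis is used.

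A bound $\abs{\hat\sigma(\zeta)}\ll\abs{\zeta}^{-\kappa}$, uniform in the direction of $\zeta$, for a smooth measure $\sigma$ on $\{F=0\}\cap K$ requires that no open piece of $\{F=0\}$ near $K$ lie in an affine hyperplane. Your hypothesis is global and much weaker: it only forbids the whole set from lying in one $(n-p)$-dimensional subspace. This leaves room for several failures.
\begin{itemize}
\item For $p\ge2$ the set may still lie in a hyperplane. For instance, take $F=(x_1x_4,\;x_1^2+x_2^2-x_3^2)$ near $(3,4,5,0)$. The gradients are independent there, but $\hat\sigma(0,0,0,t)$ is constant in $t$.
\item For $p=1$ the set may have several components, one of them flat (e.g.\ $F=x_nG$).
\item When $n-p=1$, every component near $K$ is a segment of a line through the origin, so decay in all directions never holds.
\end{itemize}

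In fact, with the hypothesis read literally the statement is false, so the gap cannot be closed without changing it. Take $n=2$, $p=1$, $F(x)=x_2(x_2-x_1)$, and let $K$ be the convex hull of the closed discs of radius $r<1$ around $(1,0)$ and $(1,1)$. Then $\nabla F$ vanishes only at $0$, $\{F=0\}\cap K$ meets two distinct lines, and (P1)--(P2) hold. Yet the points $(m,0)$ with $\abs{m-T}\le rT$ give $\gg T=T^{n-p}$ solutions of $F(m)=0$ in $T\cdot K$. This contradicts the bound for every $\alpha>1$.

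You therefore need a local hypothesis, e.g.\ that no connected component of $\{F=0\}$ meeting a neighbourhood of $K$ lies in a linear hyperplane. With that hypothesis your finite-type idea goes through, and homogeneity is what makes it work:
\begin{itemize}
\item A hyperplane tangent to the cone $\{F=0\}$ at $x_0$ contains the radial direction $x_0$, hence the origin. So infinite-order contact would, by analyticity, put a whole component inside a linear hyperplane.
\item Compactness then gives a uniform type $k$ and decay $\abs{\zeta}^{-1/k}$.
\item You need the same decay uniformly for the level sets $\{F=u\}$ with $u$ small, because your weight smears $\sigma$ over them.
\end{itemize}
Finally, make explicit that summing $T^{n-p\alpha}\abs{\hat\sigma(T\xi)}$ over $0<\abs{\xi}\lesssim T^{\alpha-1}$ saves a power only when $\alpha-1$ is small compared with $\kappa$. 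Larger $\alpha$ must be reduced to that range using monotonicity of the count in $\alpha$.
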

Sarnak does not explain what he means with \enquote{nice}, however \cite{AM2023} states that the following list of conditions on $K$ is sufficient to make the proof work.
	\begin{enumerate} [label=(P\arabic*)]
	\item $K$ is compact, equal to the closure of its interior, and it intersects the variety $\{x \in \R^n \mid F(x)=0\}$ non-trivially.
	\item $K$ is convex and has piecewise smooth boundary.
\end{enumerate}
Note that when $\alpha>1$, the right-hand-side of Equation \eqref{eq Sarnak} provides a non-trivial bound for the number of solutions $m \in T \cdot K$ to $F(m)=0$.
Sarnak then conjectures the following result at the end of his proof:
\begin{conjecture}[Sarnak, \cite{S97}] \label{conj: sarnak}
	Assume that:
	\begin{itemize}
		\item The set $\{x \in K \mid F(x) = 0\}$ does not lie in an $(n-p)$-dimensional linear subspace and is non-empty.
		\item $K$ satisfies (P1).
	\end{itemize}
	Then the following statement holds: there exists a $\delta > 0$, such that
\begin{equation}\label{eq:conjecture_Sarnak}
	\begin{aligned}
		\#\Big( \{x \in T \cdot K : \norm{F(x)} \le T^{d-\alpha}\}
		\cap \mathbb{Z}^n \Big)
		&\ll \operatorname{Vol}_n \Big(
		\{x \in T \cdot K : \norm{F(x)} \le T^{d-\alpha}\}
		\Big) \\
		&\quad + T^{\,n-p-\delta}.
	\end{aligned}
\end{equation}
\end{conjecture}
In \cite{AM2023}, the authors provide an example illustrating that additional assumptions are necessary for the conjecture to hold, and they explain that problems arise if the set $\{x \in \R^n \mid F(x)=0\}$ is in some sense \enquote{too flat}. 
This is possible even if it is not lying in a $(n-p)$-dimensional linear subspace. 
This motivates the following research question examined in \cite{AM2023}. 
\begin{question}\label{question AM} 
	Assume that $K$ is semi-algebraic and satisfies the same conditions as in Conjecture \ref{conj: sarnak}. 
	Which additional assumptions related to the \enquote{level of flatness} of the set $\{x \in \R^n \mid F(x)=0\}$ should be imposed such that Equation \eqref{eq:conjecture_Sarnak} holds?
\end{question}
This measure of flatness is rigorously introduced in \cite{AM2023}, where a theorem is shown proving Sarnak's conjecture under additional assumptions (Theorem 6.1 in \cite{AM2023}).
Corollary \ref{conj cluckers adiceam} makes the definition of the measure of flatness effective, in the sense that a $\liminf$ becomes an actual limit, with explicit error term for the convergence ~\cite[p\, 116]{AM2023}.
We will discuss this application further in Section \ref{sec:corollaries}.

	
	Other than the positive resolution of the Adiceam-Cluckers conjecture, we also use Theorem \ref{th:main_intro} to prove that the algebra of constructible functions is polynomially bounded, in a precise and subtle sense which is different from the more common notion of polynomially
	bounded o-minimal structures, as we explain in section \ref{subsec: poly bounded}, and to provide a partial answer to the question in \cite{ACRS24} asking whether all $\mathcal{C}^{\exp}$-distributions are tempered. Moreover, we apply it to make a bound for certain integrals involving pushforward measures, given in \cite{GHS24}, uniform, as suggested in \cite{GHS24}.
	
	The natural idea to prove Theorem \ref{th:main_intro} is to adapt Cluckers, Gordon and Halupczok's strategy in the $p$-adic case \cite{CGH2018}.
	They proceed by reducing the problem to bounding $\max_{0 \le w \le t} |h(w)|$ or $\sup_{w \in \N} |h(w)|$, for $h: \N \to \C$ of the form
	
	\begin{equation}\label{eq:CGH2018} 
		h(w)=\sum_i c_i w^{a_i} q^{b_i w},
		\end{equation}
	
	independently of the coefficients $c_i$.
	Their reduction uses model-theoretic techniques to get from a function over the valued field to a function over the residue field and value group.
	However, a direct adaptation runs into difficulties. Indeed, in our case, there are three complicating factors.
	
	First of all, we cannot expect to reduce to functions of the form \eqref{eq:CGH2018}, even after the substitution $w=\log y$.
	Indeed, as a subanalytic function can locally look like any analytic function, the coefficients $c_i$ will usually have to depend on $y$ as well.
	Hence, instead of reducing to functions of the form \eqref{eq:CGH2018}, we reduce to functions of the form
	
	\begin{equation}\label{eq:intro_reduction}
		h(y)=\sum_i c_i f_i(y) y^{a_i} (\log y)^{b_i},	\end{equation}
		
	where the $f_i(y)$ can depend on $y$ but are close to $1$ in a suitable sense.
	We want to bound the supremum of such functions independently of the coefficients $c_i$ and the functions $f_i$.
	This happens mostly in Section \ref{sec:unbalanced}.
	The proof is based on the proof of Lemmas 2.2.3 and 2.2.4 in \cite{CGH2018}, but we need more precise estimates to make the argument work.
	Furthermore, we need the Riemann-Lebesgue lemma to deal with oscillatory terms stemming from complex powers.
	
	A second complication is that, in order to reduce to functions of the form \eqref{eq:intro_reduction}, we do not have access to the model theory of valued fields.
	Hence, we need a different strategy altogether.
	The main ingredient here is Cluckers and Miller's rectilinear preparation theorem for subanalytic functions (\cite{CM2013}, Theorem 1.5).
	We use it in Section \ref{sec:preparation results} to get several preparation results for power-constructible functions, eventually reducing to functions of the form \eqref{eq:intro_reduction}.
	
	A third and final challenge consists in bounding the parametric supremum of power-constructible maps $f:\R^n \times C \to \C$, where $C \subset \R^m$ is compact.
	In the $p$-adic case, this corresponds to bounding the parametric supremum of $h:X \times \{0,1, ..., n\} \to \C$, which equals $\max_{i \in \{0,1, ..., n\}} |h(x, i)|$ and is therefore automatically of the desired form.
	However, over $\R$, we still need to work with a continuum.
	We will call this situation \emph{balanced} and the other situation \emph{unbalanced}.
	We use a topological argument using the compactness as a workaround, which happens in Section \ref{sec:balanced}.
	
	Finally, one may hope that instead of using the maximum of several power-constructible functions, Theorem \ref{th:main_intro} would work without the maximum or without the absolute values.
	We give several examples illustrating why this is not possible in general in Section \ref{sec:counterexamples}.
	
	\medskip\noindent
	\textbf{Structure of the paper.} We start by giving the necessary definitions and preliminary results in Section \ref{sec:prelim}.
	Then we show the necessary preparation results in Section \ref{sec:preparation results}.
	The proof of Theorem \ref{th:main_intro} falls into two parts, for two kinds of cells, which are discussed in Section \ref{sec:unbalanced} and Section \ref{sec:balanced}.
	These two sections complete the proof of Theorem \ref{th:main_intro}, so that we can discuss its applications in Section \ref{sec:corollaries}.
	Finally, we give several counterexamples in Section \ref{sec:counterexamples}, showing that Theorem \ref{th:main_intro} is the best possible in a suitable sense.
	
	\medskip\noindent
	\textbf{Acknowledgements.}
	We would like to thank Neer Bhardwaj, Faustin Adiceam and Raf Cluckers for interesting discussions on the topics of the paper.
	We would like to thank Yotam Hendel for pointing out and discussing the Corollary~\ref{corol: pushforward measures}. T.B. was supported by FWO
	Flanders (Belgium) with grant number 1131925N.
	M.S. was supported by McMaster University and the Fields Institute. L.V. was supported by KU Leuven IF grant C16/23/010.
	
%
%
%

	\section{Preliminaries} \label{sec:prelim}
	
	\subsection{Notational conventions}
	
	Denote by $\N=\{0, 1, 2, ...\}$ the set of natural numbers.
	For a set $X \subset \R^n$, $\overline{X}$ is its closure in the Euclidean topology.
	
	Given a tuple $y \in \R^n$, we will denote by $y_{\Box l}$ the tuple of $y$-variables $(y_i)_{i \Box l}$, where $\Box \in \{<, \le, >, \ge\}$.
	We will say $y \ge 0$ if $y_i \ge 0$ for all $i$ and we will say $y>0$ if $y \ge 0$ and $y \neq 0$.
	We define $y \le 0$ and $y<0$ similarly.
	If $\alpha \in \C^n$ and the power $y_i^{\alpha_i}$ is defined for each $i$, then we let $y^\alpha=y_1^{\alpha_1}\ldots y_n^{\alpha_n}$.
	For $y \in \R_{>0}^n$, we denote $\log y=(\log(y_1), ..., \log(y_n))$.
	
	We denote
	
	\[\pi_m: \R^{m+n} \to \R^m: (x_1, ..., x_{m+n}) \mapsto (x_1, ..., x_m)\]
	
	for all $m, n \in \N$.
	Given $Y \subset \R^{m+n}$ and $x \in \R^m$, let
	
	\[Y_x=\{y \in \R^n \mid (x,y) \in Y\}.\]
	
	Given $f: Y \subset \R^{m+n} \to \R$ and $x \in \R^m$, we let $f_x: Y_x \to \R: y \mapsto f(x,y)$.
	We say that $f$ is \emph{fiberwise bounded} if $f_x$ is bounded for all $x \in \R^m$ (note that this is a slight abuse of notation as the subdivision of $\R^{m+n}$ into $\R^m$ and $\R^n$ is not unique; however, the one meant will always be clear from context).
	If $f$ is fiberwise bounded, we denote
	
	$$S_f: \pi_m(Y) \to \R: x \mapsto \sup_{y \in Y_x} \abs{f_x(y)}\\
	$$
	
	and we say that $S_f$ is a \emph{parametric supremum}.
	Finally, given $a, b: X \to \R$, we say that $a \sim b$ if there exists a constant $C>0$ such that
	
	\[\frac1C a \le b \le Ca\]
	
	on $X$.
	We call $C$ `the' \emph{implicit constant}.
	
	\subsection{O-minimality}
	
	Throughout this paper, we will strongly rely on the machinery from o-minimal geometry; see, for instance, \cite{vanDenDries1998TameTopology} by van den Dries.
	
	\begin{definition}
		A structure $\mathcal{U}$ on $\R$ is called o-minimal if every definable subset of $\R$ is a finite union of open intervals and points.
	\end{definition}
	
	The main property of o-minimal structures we need is the following.
	
	\begin{theorem}[\cite{vanDenDries1998TameTopology}]
		Consider a subset $S \subset \R^m \times \R^n$ which is definable in an o-minimal structure. 
		Then there exists a natural number $N \in \N$, such that for each $a \in \R^m$, if $\abs{S_a}>N$, then $S_a$ is infinite (in other words, $N$ is an upper bound for the cardinality of finite fibers).
	\end{theorem}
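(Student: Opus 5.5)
The statement is the uniform finiteness theorem for o-minimal structures. I would prove it by reducing to the one-dimensional case ($n=1$) and then using cell decomposition with induction on $n$.

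For $n=1$, take a cell decomposition of $\R^m\times\R$ partitioning $S$. It has finitely many cells $D_1,\dots,D_k$, each either the graph of a definable continuous function over a cell $B\subset\R^m$ or a band $(\xi_1,\xi_2)_B$ between two such functions (or $\pm\infty$). For a fixed $a$, the fiber $S_a$ is the union of the fibers $(D_i)_a$ over those $i$ with $a\in\pi_m(D_i)$. A graph cell contributes exactly one point, and a band cell contributes a nonempty open interval, hence infinitely many points. So if $S_a$ is finite, only graph cells contribute, and $|S_a|\le k$. Take $N=k$.

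For general $n$, I would induct on $n$ and write $\R^n=\R^{n-1}\times\R$. The main step is to control the projection of $S_a$ to the first $n-1$ coordinates while keeping track of finiteness. Let $T=\{(a,y')\in\R^m\times\R^{n-1} : S_{(a,y')}\ne\emptyset\}$, where $S_{(a,y')}\subset\R$ is the fiber over $(a,y')$. This set is definable. By the case $n=1$, applied with parameter space $\R^{m+n-1}$, there is a bound $N_1$ such that every finite fiber $S_{(a,y')}$ has at most $N_1$ points. By induction, applied to $T\subset\R^m\times\R^{n-1}$, there is a bound $N_2$ such that every finite $T_a$ has at most $N_2$ points.

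Now suppose $S_a$ is finite. Then $T_a$ is the projection of $S_a$, so it is finite, and each fiber $S_{(a,y')}$ is finite. Hence $|S_a|\le N_1N_2$, and $N=N_1N_2$ works.

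The only substantive input is the cell decomposition theorem, which is where o-minimality is used. It relies on the monotonicity theorem and the finiteness lemma, which I would quote from \cite{vanDenDries1998TameTopology}. Deriving cell decomposition from scratch would be the real obstacle. Specifically, one must show, uniformly in parameters, that a definable family of finite subsets of $\R$ has bounded size, and this is circular unless it is obtained through the inductive proof of cell decomposition itself. So I would take cell decomposition as given and present only the counting argument above.
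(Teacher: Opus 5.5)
Your argument is correct, provided the cell decomposition theorem is taken as given; the paper gives no proof of its own and only cites van den Dries, where the statement is likewise a consequence of cell decomposition obtained by counting cells. Your induction on $n$ works but can be skipped: for a cell $D\subset\R^{m+n}$ and $a\in\pi_m(D)$, the fiber $D_a$ is a cell in $\R^n$, which is finite only if it is a point, so the number of cells in a decomposition partitioning $S$ already serves as $N$.
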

	
	We will have need of the following two o-minimal structures.
	
	\begin{definition}
		We say that $f: \R^n \to \R$ is a restricted analytic function if $f\mid_{[-1,1]^n}$ is analytic (meaning $f\mid_{[-1,1]^n}$ can be extended to a real analytic function on an open superset of $[-1,1]^n$) and if $f(x)=0$ for $x \not \in [-1,1]^n$.
		
		Let $\R_{\mathrm{an}}$ be the expansion of $(\R, +, -, \cdot, \le)$ by all restricted analytic functions.
		We say that functions and sets are subanalytic if they are definable in $\R_{\mathrm{an}}$.
		
		Let $\R_{\mathrm{an}, \exp}$ be the expansion of $\R_{\mathrm{an}}$ by the global exponential $\exp: \R \to \R$.
	\end{definition}
	
	\begin{remark}
		The natural logarithm $\log: (0, \infty) \to \R$ is not a subanalytic function in this sense (this follows from the preparation theorems below), but it is definable in $\R_{\mathrm{an}, \exp}$.
	\end{remark}
	
	$\R_{\mathrm{an}}$ and $\R_{\mathrm{an}, \exp}$ are o-minimal, as proven in \cite{Gabrielov1968Projections}, \cite{vanDenDries1986TarskiSeidenberg}, \cite{vanDenDries1988RestrictedElementary} and \cite{vanDenDriesMacintyreMarker1994Elementary}, \cite{LionRolin1997Preparation}, based on \cite{Wilkie96}.
	
	\subsection{Preparation results for subanalytic functions}
	
	For subanalytic functions, there are several preparation results which help us understand their structure.
	To state them, we will first need the notion of \emph{cells}.
	
	\begin{definition}
		A subanalytic cell is a subanalytic set 
		$A \subset \mathbb{R}^{n}$ such that for each $i \in \{1, \dots, n\}$, 
		the set $\pi_{i}(A)$ is either the graph of an analytic subanalytic function on $\pi_{i-1}(A)$, or
		\begin{equation*}
			\pi_{i}(A) = 
			\left\{ y \in \R^i \;\middle|\; y_{<i} \in \pi_{i-1}(A), \ 
			a_i(y_{<i}) \,\Box_1\, y_i \,\Box_2\, b_i(y_{<i}) \right\}
		\end{equation*}
		
		for some analytic subanalytic functions $a_i, b_i : \pi_{i-1}(A) \to \mathbb{R}$ 
		satisfying $a_i(y_{<i}) < b_i(y_{<i})$ on $\pi_{i-1}(A)$, 
		where $\Box_1$ and $\Box_2$ denote either $<$ or no condition.
		
		We say that $A$ is thin if $A$ is the graph of a subanalytic function and that $A$ is fat otherwise.
	\end{definition}
	
	Using cells, we can define the notion of a cell decomposition.
	
	\begin{definition}
		Let $\{C_i\}_{i \in I}$ be a finite collection of cells, such that $C_i \subset \R^n$ for all $i$.
		We say that the $C_i$ form a cell decomposition of $\R^n$ if:
		
		\begin{enumerate}
			\item they form a partition of $\R^n$;
			
			\item inductively, $\{\pi_{n-1}(C_i)\}_{i \in I}$ is a cell decomposition of $\R^{n-1}$.
		\end{enumerate}
		
		We say that a cell decomposition is compatible with a subanalytic set $X$ if $X$ is the union of several of the cells.
	\end{definition}
	
	\begin{theorem}
		Given $X_1, ..., X_k \subset \R^n$ subanalytic, there exists a cell decomposition of $\R^n$ compatible with all $X_i$.
	\end{theorem}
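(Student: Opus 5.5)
We prove the statement, the existence of a cell decomposition compatible with finitely many subanalytic sets, by induction on $n$. We follow the standard o-minimal cell decomposition theorem in \cite{vanDenDries1998TameTopology}, with one extra requirement: the boundary functions and graph functions must be analytic rather than merely continuous or $C^k$. So we prove, jointly by induction on $n$, two statements. The first, $(\mathrm{I}_n)$, is the claim above. The second, $(\mathrm{II}_n)$, says: for every subanalytic $f: X \to \R$ with $X \subset \R^n$, there is a decomposition of $\R^n$ compatible with $X$ such that $f$ is analytic on each cell contained in $X$.

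For $n=1$, o-minimality of $\R_{\an}$ gives $(\mathrm{I}_1)$ directly: each $X_i$ is a finite union of points and open intervals, so we take all endpoints as the point cells and the complementary open intervals as the fat cells. For $(\mathrm{II}_1)$ we need that a subanalytic one-variable function is analytic off a finite set. By o-minimality it is piecewise continuous and piecewise monotone. For analyticity we use that a unary subanalytic function is, near each point and near $\pm\infty$, given by a convergent Puiseux series; this is also a consequence of the preparation theorems. So it is analytic away from finitely many points.

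For the inductive step, let $X_1,\dots,X_k \subset \R^{n+1}$. Uniform finiteness (the theorem on finite fibres quoted above) gives an $N$ such that every finite fibre of the boundary sets $\partial_{y}(X_i)_x$ has size at most $N$. Since each fibre $(X_i)_x \subset \R$ is a finite union of points and intervals, its boundary in $\R$ is finite, of size at most $N$. By definable choice in $\R_{\an}$ (which has definable Skolem functions, since it expands an ordered field), the ordered boundary points are given by subanalytic functions $\xi_1(x)<\dots<\xi_{l(x)}(x)$. The sets $\{x : l(x)=j\}$ and $\{x : \xi_j(x)\in\partial(X_i)_x\}$, together with the sets recording which intervals between consecutive $\xi_j$ lie in which $X_i$ (test the midpoints, or $\xi_j\pm1$ at the ends), are subanalytic in $\R^n$. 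We then apply $(\mathrm{I}_n)$ and $(\mathrm{II}_n)$ to these finitely many sets and functions. This gives a decomposition of $\R^n$ on each cell $D$ of which $l$ is constant, each $\xi_j$ is analytic, and the membership pattern is constant. The graphs of the $\xi_j|_D$ and the bands between them are then cells of $\R^{n+1}$ compatible with every $X_i$, and their projections form the decomposition of $\R^n$, so condition (2) holds.

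The step we expect to be the main obstacle is proving $(\mathrm{II}_{n+1})$ from the inductive hypotheses, that is, showing that a subanalytic $f$ in $n+1$ variables is analytic off a subanalytic set of lower dimension. The set $\mathrm{Sing}(f)$ of points where $f$ fails to be analytic is subanalytic: analyticity at a point can be expressed by bounds on the Taylor coefficients, via the existence of a subanalytic analytic extension in a neighbourhood. Proving $\dim \mathrm{Sing}(f)\le n$ is the real content. We would argue as in the $C^k$ case. First, $f$ is $C^\infty$ on a dense open subset, by the standard o-minimal argument. Then analyticity on a dense open set follows from the one-variable Puiseux statement applied along coordinate lines, together with Hartogs-type separate-analyticity results for subanalytic functions; alternatively, one can cite the rectilinear preparation theorem \cite{CM2013}. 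Once $\dim \mathrm{Sing}(f)\le n$ is known, we make the decomposition compatible with $\mathrm{Sing}(f)$ and refine inductively over the thin cells, using the analytic graph maps to reduce their dimension. This reduction proceeds by induction on the dimension of the cell, which completes the proof.
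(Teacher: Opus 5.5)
Your reduction of $(\mathrm{I}_{n+1})$ to $(\mathrm{I}_n)$ and $(\mathrm{II}_n)$ is the standard o-minimal argument and is essentially fine, up to small slips:
\begin{itemize}
\item for the graph cells you need the sets $\{x : \xi_j(x)\in (X_i)_x\}$, not $\partial(X_i)_x$;
\item definable choice is unnecessary, since $\xi_j(x)$ is just the $j$-th element of a finite definable set;
\item $(\mathrm{II}_n)$ must be used for several functions at once, which you get by applying $(\mathrm{I}_n)$ to all cells of the individual decompositions.
\end{itemize}

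The genuine gap is where you locate it yourself: the claim that a subanalytic $f$ on an open subset of $\R^{n+1}$ is analytic off a lower-dimensional subanalytic set. None of the arguments you sketch proves it.
\begin{itemize}
\item The standard o-minimal argument gives, for each fixed $k$, a lower-dimensional definable set $Z_k$ off which $f$ is $C^k$. But $Z_k$ depends on $k$, so nothing follows about $C^\infty$ regularity on a definable dense open set. Le Gal and Rolin constructed an o-minimal expansion of the real field with no $C^\infty$ cell decomposition, so some input specific to $\R_{\an}$ is unavoidable.
\item Separate analyticity does not provide that input. The function $x^3/(x^2+y^2)$ (extended by $0$) is semialgebraic and analytic on every coordinate line, yet not differentiable at the origin. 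So you would need an unproved dense-open-set version of Hartogs. Before that, you would have to show that the locus where $f$ fails to be analytic in a coordinate direction is subanalytic, which already needs a parametric form of the Puiseux statement.
\item ``Bounds on the Taylor coefficients'' is not a first-order condition, since it quantifies over all orders. So the subanalyticity of $\mathrm{Sing}(f)$, which you need in order to make a decomposition compatible with it, is not established. It is true, but only via a deep uniformity result such as Tamm's theorem: $f$ is analytic near every point near which it is $C^N$, for some $N$ depending on $f$.
\end{itemize}

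Your fallback of citing the rectilinear preparation theorem does close the gap: use the union of the lower-dimensional pieces of the partition instead of $\mathrm{Sing}(f)$. But once a preparation theorem is admitted, the whole induction is superfluous. This is exactly the paper's proof: apply the simultaneous tight preparation theorem to the characteristic functions $\mathbf{1}_{X_1},\dots,\mathbf{1}_{X_k}$. Its output is already a cell decomposition with analytic cell data. On each cell every $\mathbf{1}_{X_j}$ is prepared, hence analytic, hence constant because cells are connected, so each cell lies inside or outside each $X_j$. The analytic input your induction is missing is precisely the content of the preparation theorem, and the paper uses it directly instead of rebuilding the cell decomposition around it.
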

	
	\begin{proof}
		This follows from the preparation theorem below by taking the $f_i$ to be the characteristic functions of the $X_i$.
	\end{proof}
	
	We will informally say that a cell decomposition of $\R^n$ compatible with $X$ is a `cell decomposition of $X \subset \R^n$'.
	
	We will also need the notion of a function `being prepared', for which we need the notions of $\varphi$-functions, special functions and units.
	
	\begin{definition}
		Let $\varphi: X \to \R^m$ and $f: X \to \R$ be functions.
		We say that $f$ is a $\varphi$-function if there exists an analytic function $W: \overline{\varphi(X)} \to \R$ such that $f=W \circ \varphi$.
		Call $f$ a $\varphi$-unit if, moreover, $W \neq 0$ on $\overline{\varphi(X)}$.
		
		Given a subanalytic $f: X \subset \R^{n+1}  \to \R$, we say that $f$ is a special function, respectively unit, if there exists a bounded, analytic, subanalytic map $\varphi: X \to \R^{m+2}$ of the form
		
		$$\varphi(x,y)=(a_1(x), a_2(x), ..., a_m(x), a_{m+1}(x) \abs{y}^{-1/p}, a_{m+2}(x) \abs{y}^{1/p}),$$
		where $p \in \N \setminus \{0\}$ and $y \neq 0$ on $X$, such that $f$ is a $\varphi$-function, respectively $\varphi$-unit.
		
		We use the same terminology for $f: X \to \C$, where we require that the property holds for $\mathrm{Re}(f)$ and $\mathrm{Im}(f)$.
	\end{definition}
	
	\begin{definition}
		Let $f: X \subset \R^{n+1} \to \R$ be subanalytic.
		We say that $f$ is prepared on $X$ if $X$ is a cell and either $X$ is thin and $f$ is analytic on $X$, or $X$ is fat and
		
		\[f(x,y)=a(x) \abs{y-\theta(x)}^q U(x, y-\theta(x))\]
		
		on $X$.
		Here the following need to hold:
		
		\begin{enumerate}
			\item $a(x)$, $\theta(x)$, $U(x, y-\theta(x))$ are all analytic and subanalytic on $X$.
			\item $q \in \Q$.
			\item $y \neq \theta(x)$ on $X$.
			\item $U$ is a special unit.
		\end{enumerate}
		
		We say that $\theta$ is the center of the preparation.
		
		We say that the preparation is tight if either $\theta=0$ or $y \sim \theta(x)$ on $X$.
	\end{definition}
	
	The following theorem is then shown, for example, by Miller (\cite{M2006}, Main Theorem and Lemma 4.4).
	
	\begin{theorem}[Simultaneous tight preparation] \label{th:prep_subanalytic}
		Let $f_1: X_1 \subset \R^{n+1} \to \R$, $f_2: X_2 \subset \R^{n+1} \to \R$, ..., $f_k: X_k \subset \R^{n+1} \to \R$ be subanalytic.
		Then there exists a cell decomposition $\{C_i\}_{i \in I}$ of $\R^{n+1}$ such that every $f_j$ is prepared on every $C_i \subset X_j$, and moreover this preparation is tight and uses the same center $\theta_i$ for every $f_j$.
	\end{theorem}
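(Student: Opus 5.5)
The plan is to obtain the statement from two ingredients: a simultaneous (not necessarily tight) preparation theorem for finitely many subanalytic functions with a common center, and a refinement step that makes each preparation tight. For the first ingredient, I would apply the subanalytic preparation theorem (Lion–Rolin, Parusiński, in the form of \cite{M2006}, Main Theorem) to the finite family $f_1,\dots,f_k$ together with the characteristic functions of $X_1,\dots,X_k$. Working with one family rather than one function at a time is what produces a single cell decomposition and a single center $\theta_i$ on each cell.

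This gives a cell decomposition of $\R^{n+1}$ on whose cells every $f_j$ is prepared with a common center, in the form
\[f_j(x,y)=a_j(x)\abs{y-\theta(x)}^{q_j}U_j(x,y-\theta(x)).\]
Here each $U_j$ is a $\varphi$-unit for some bounded analytic subanalytic map $\varphi$ with components $b_l(x)$ and $c(x)\abs{y-\theta}^{\pm 1/p}$. After enlarging the tuple, I may take a common $\varphi$ for all $j$ on the cell. Thin cells need no further work, since there $f_j$ is already analytic.

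For tightness, I follow the idea of \cite{M2006}, Lemma 4.4. Fix a fat cell $C$ with center $\theta$ and split it along the subanalytic conditions
\[\abs{y-\theta}<\tfrac12\abs{\theta},\qquad \abs{y}<\tfrac14\abs{\theta},\qquad \text{and the remainder.}\]
I then refine by a further cell decomposition compatible with these pieces. On the piece $\abs{y-\theta}<\frac12\abs{\theta}$ we have $y\sim\theta$, so the preparation is already tight. On the other pieces I recenter at $0$, using one of two factorizations.
\begin{itemize}
\item If $\abs{y}<\frac14\abs{\theta}$, write $y-\theta=-\theta(1-y/\theta)$. Then $\abs{y-\theta}^{q}=\abs{\theta}^q\abs{1-y/\theta}^q$, and $1-y/\theta$ is an analytic unit in the bounded quantity $\theta(x)^{-1}y$.
\item On the remaining region, $\abs{y}\gtrsim\abs{\theta}$ and $\abs{y-\theta}\gtrsim \abs{\theta}$. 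Write $y-\theta=y(1-\theta/y)$, where $\theta/y$ is bounded and $1-\theta/y$ stays away from $0$ after a further finite splitting.
\end{itemize}
In both cases the new data have the required shape $a(x)\abs{y}^{1/p}$ or $a(x)\abs{y}^{-1/p}$. This may require replacing $p$ by a multiple and taking $a(x)=\theta^{-1/p}$ or $\theta^{1/p}$, which are bounded in terms of $\abs{y}$ on the piece in question. Signs of $y$ and $\theta$ are fixed on each piece after refining.

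The main obstacle is re-expressing the old special unit $U_j(x,y-\theta)$, which is analytic in $\abs{y-\theta}^{\pm1/p}$ with bounded coefficients, as a special unit for the new center $0$. On the region $\abs{y}<\frac14\abs{\theta}$, the quantity $c(x)\abs{y-\theta}^{1/p}$ equals $c(x)\abs{\theta}^{1/p}\,\abs{1-y/\theta}^{1/p}$. This is an analytic function of the bounded $x$-function $c(x)\abs{\theta}^{1/p}$ and of $y/\theta$, and the latter is bounded away from the branch point. The analogous identity handles the other region with the roles of $y$ and $\theta$ swapped.

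What has to be checked carefully is that the composite is analytic on the closure of the new image of $\varphi$, and nonvanishing there. This uses that the old $W$ was a unit on the closure of the old image, together with the observation that the closure of the new image maps into the closure of the old one. I would handle it by taking the new $\varphi$ to include all the old components, now written as functions of the new ones.

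Finally, all pieces are intersected into one common cell decomposition, with the same refinement used for every $j$. This keeps the center common across $f_1,\dots,f_k$ on each cell, giving the simultaneous tight preparation.
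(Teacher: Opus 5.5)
Your proposal is correct and takes the same route as the paper. The paper gives no argument of its own: it cites Miller's Main Theorem for simultaneous preparation with a common center and his Lemma 4.4 for tightness, which are exactly your two steps.

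Your recentering on the pieces $\abs{y}<\frac14\abs{\theta}$ and the remainder is a correct rendering of the tightness step. This includes fixing the signs of $y$ and $\theta$, and checking that the closure of the new image maps into the closure of the old one. One small caveat concerns merging several $y$-dependent components into the single $a_{m+1}(x)\abs{y}^{-1/p}$ and $a_{m+2}(x)\abs{y}^{1/p}$ slots of the paper's definition. That needs the usual splitting of the base according to which coefficient dominates, and is not achieved merely by enlarging the tuple.
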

	
	Other than simultaneous tight preparation, we will need a more sophisticated preparation result: rectilinear preparation as in \cite{CM2013}, Proposition 5.3.
	
	\begin{definition}
		Let $A \subset \R^n$ be a cell.
		We say that $A$ is $l$-rectilinear if $A=\pi_l(A) \times (0,1)^{n-l}$, such that $\overline{\pi_l(A)}$ is a compact subset of $(0,1]^l$.
		
		We say that $f: A \to \R$ is $l$-prepared if
		
		\[f(x)=x_{>l}^r U(x)\]
		
		for some $r \in \Q^{n-l}$ and where $U$ extends to an analytic subanalytic unit on $\overline{A}$.
	\end{definition}
	
	\begin{theorem}[Rectilinear preparation]
		Let $\mathcal{F}$ be a finite set of subanalytic functions on a subanalytic set
		$D \subset \mathbb{R}^{n}$. Then there exists a finite partition $\mathcal{A}$
		of $D$ into subanalytic sets such that for each $A \in \mathcal{A}$ there exist
		$d \in \{0,\ldots,n\}$, $l \in \{0,\ldots,d\}$, and a subanalytic map
		\[
		F : B \to A
		\]
		such that $F$ is an analytic isomorphism, the set
		$B \subset \mathbb{R}^{d}$ is $l$-rectilinear, and each
		function $g$ in the set
		$\mathcal{G}
		=
		\{f \circ F\}_{f \in \mathcal{F}}$
		is $l$-prepared on $B$.
	\end{theorem}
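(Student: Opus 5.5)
The plan is to argue by induction on $n$, using Theorem~\ref{th:prep_subanalytic} (simultaneous tight preparation) to handle the last variable and the induction hypothesis to handle the parameters. For $n=0$ there is nothing to prove. For the inductive step we first apply Theorem~\ref{th:prep_subanalytic} to the finite family $\mathcal{F}$, viewed as functions of $(x,y)\in\R^{n-1}\times\R$. This gives a cell decomposition on each cell of which every $f\in\mathcal{F}$ is prepared with a common center $\theta$.

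On thin cells we project onto $\R^{n-1}$ via the analytic isomorphism $x\mapsto(x,\theta(x))$. The functions $f(x,\theta(x))$ are analytic and subanalytic, so they fall under the induction hypothesis; these cells account for the cases $d<n$. On a fat cell $C$ we proceed in three steps.
\begin{enumerate}
\item Translate by $y\mapsto y-\theta(x)$ and reflect the sign, so that $y>0$.
\item If $\theta\neq0$, tightness gives $y\sim\theta(x)$. Then $|y-\theta|$ and the special unit can be rewritten in terms of the bounded quantity $(y-\theta)/\theta$, and the fiber lies in a region where no monomial factor in $y$ is needed.
\item If $\theta=0$, the fiber is an interval $(a(x),b(x))$ with $0\le a<b\le\infty$. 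We split it at a point such as $\sqrt{ab}$ or $2a$, or use $y\mapsto1/y$ on unbounded pieces. Rescaling by $y\mapsto b(x)t$ (or $y\mapsto a(x)/t$) then turns each piece into the form $t\in(0,1)$, possibly with a lower bound $t>c(x)$ that we absorb into the base.
\end{enumerate}
Finally we substitute $t=s^p$. After this substitution the components $a_{m+1}(x)|y|^{-1/p}$ and $a_{m+2}(x)|y|^{1/p}$ of the map $\varphi$ become $\alpha(x)s^{-1}$ and $\beta(x)s$. Since these are bounded on the cell, the unit $U=W\circ\varphi$ becomes an analytic function of $(\text{base data},\,s,\,\alpha(x)/s)$.

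Next we collect all the base data into one finite family of subanalytic functions on $\pi_{n-1}(C)$ and apply the induction hypothesis to it. This family consists of the coefficients $a(x)$, the components $a_i(x)$ of $\varphi$, the rescaling factors, the cell boundaries, and the ratios $\alpha(x)/\ldots$. The induction hypothesis gives analytic isomorphisms from $l$-rectilinear sets $B'\subset\R^{d'}$ on which each of these functions is $x_{>l}^{r}\cdot(\text{unit})$. Composing with the fiber change of variables yields a map $F:B'\times(0,1)\to A$. On $B'\times(0,1)$, each $f\circ F$ is a product of monomials in the base coordinates and in $s$ times analytic units, which is the desired $l$-prepared form with $s$ as the new last coordinate.

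We must also keep $\overline{\pi_l(B)}$ compact inside $(0,1]^l$. Where a fiber variable is bounded away from $0$ (as in the case $y\sim\theta$, or when $c(x)$ is bounded below), we treat it as one of the first $l$ variables. This requires reordering coordinates, and the whole construction is only defined up to such a permutation.

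The main obstacle is ensuring that $U\circ F$ extends to an analytic unit on the \emph{closure} of $B$. In the base the unit is analytic only on $\overline{\varphi(X)}$, and the argument $\alpha(x)/s$ is bounded but not analytic at $s=0$, $x_{>l}=0$. The remedy I would try first is a further subdivision. We compare $\alpha(x)$ with $s$ by splitting into the regions $\alpha(x)\le s$ and $\alpha(x)>s$. On each region we use monomial blow-up substitutions ($s=\alpha(x)u$, or $s=\alpha(x)/u$ with $\alpha$ itself already a monomial in $x_{>l}$ by induction). This makes every argument of $W$ a monomial of non-negative exponents in the new rectilinear coordinates, so the composition is analytic up to the boundary and nonvanishing there. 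The step requires iterating the base-change induction after each blow-up, and the delicate point is the bookkeeping that shows this process terminates. The same difficulty is the reason \cite{CM2013} proves the result as a separate proposition rather than as a direct corollary of Theorem~\ref{th:prep_subanalytic}.
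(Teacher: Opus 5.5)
The paper does not prove this statement; it quotes it from \cite{CM2013} (Proposition 5.3). Your proposal therefore has to stand on its own, and as written it has a genuine gap. The gap is exactly the step you flag, which is the part of the theorem not already contained in Theorem~\ref{th:prep_subanalytic}.

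After preparing in the last variable, rescaling, and monomializing the base data by induction, you are left with a region $\{(x,s): x\in B',\ c(x)<s<1\}$, where $c(x)=x_{>l}^{q}V(x)$. You are also left with the argument $\alpha(x)/s$ of $W$, which is bounded only because $s>c(x)\gtrsim\alpha(x)$. On a genuine product $B'\times(0,1)$, boundedness of $\alpha(x)/s$ would force $\alpha\equiv 0$.

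An $l$-rectilinear set must be an exact product $\pi_l(B)\times(0,1)^{d-l}$. So the lower bound $c(x)$ cannot be ``absorbed into the base''. The substitutions you list act on the fiber variable alone, and they do not achieve what you claim. On $\{\alpha(x)\le s<1\}$, putting $s=\alpha(x)/u$ gives the region $\{\alpha(x)<u\le 1\}$, which has exactly the same shape. The other argument of $W$ becomes $\beta(x)s=\beta(x)\alpha(x)/u$, now with a negative exponent in $u$. So the difficulty is not bookkeeping; this scheme makes no progress.

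There is also a smaller error in the tight case. The coordinate carrying the factor $|y-\theta|^q$ is $(y-\theta)/\theta$, and it is \emph{not} bounded away from $0$. Only $y/\theta$ is, and $|y/\theta-1|^q$ is not a unit on the closure, so this variable cannot be moved into the compact block. Moreover, even a fiber variable that is bounded away from $0$ must first be rescaled so that its range no longer depends on $x_{>l}$.

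The missing idea is a substitution that mixes $s$ with the base coordinates $x_{>l}$. For instance, $\{0<x_1<s<1\}$ is the image of $(0,1)^2$ under $(v,s)\mapsto(sv,s)$.

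In general, first perform some routine normalizations: rescale $s$ by $V$, split off the parts where $s$ or some $x_i$ with $i>l$ is bounded away from $0$, and ramify so that the exponents are integral. Then the region $\{x_{>l}^q<s<1\}$ corresponds, in the coordinates $\xi=-\log x_{>l}$ and $\sigma=-\log s$, to the open rational polyhedral cone $\{\xi>0,\ 0<\sigma<q\cdot\xi\}$.

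Subdivide its closure into unimodular simplicial cones. The interior of each full-dimensional one is the bijective image of $(0,1)^{k+1}$ under a monomial map with non-negative integer exponent matrix. The lower-dimensional pieces are covered by induction on dimension.

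Any monomial bounded on such a piece becomes a monomial with non-negative exponents, because a linear form bounded below on a cone is non-negative on it. This applies to the monomials coming from $\alpha/s$, from $\beta s$, and from the bounded components $a_i$ of $\varphi$. Also, a function analytic and non-vanishing on the old closure stays so on the new one.

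Hence $l$-preparedness survives these substitutions, and $W\circ\varphi\circ F$ extends to an analytic unit on $\overline{B}$. There is nothing to iterate and no termination to prove. Without this step, or an equivalent rectilinearization argument, your construction never actually produces an $l$-rectilinear $B$.
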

	
	\subsection{Constructible and power-constructible functions}
	
	Finally we come to the definitions of constructible and power-constructible functions.
	
	 \begin{definition}
		Consider a subfield $\K \subset \C$. The algebra of power-constructible functions on $X$, denoted $\mathcal{C}^{\K}(X)$, is the $\R$-algebra of functions $X \to \C$ generated by all functions of the form: 
		\begin{itemize}
			\item $f$, for $f: X \to \R$ subanalytic.
			\item $\log (g)$ , for $g: X \to \R_{>0}$ subanalytic.
			\item $f^r$, for each  $f\colon X \to \R_{>0}$ subanalytic and $r \in \K.$ 
		\end{itemize}
		When $\K= \Q$, we denote this algebra with $\mathcal{C}(X)$ and we call this the algebra of constructible functions on $X$.
		Note that in this case, we may drop the final item, as $\abs{f}^r$ is subanalytic already when $r \in \Q$.
	\end{definition}
	
	We will now define several further notions that we will need in this paper.
	We start with several further types of cells that will be important.
	
		\begin{definition} \label{def: rectilinear cells}
		Let $B \subset \R^n$ be an $l$-rectilinear cell.
		We denote
		\[B_\varepsilon=\pi_l(B) \times (0, \varepsilon)^{n-l},\]
		and we say that $B_\varepsilon$ is $l, \varepsilon$-rectilinear.
		
		A subanalytic map $f: B_\varepsilon \to \R$ is called $l$-prepared if
		\[f(x)=\prod_{i=l+1}^n x_i^{r_i} U(x),\]
		where all $r_i \in \Q$ and where $U$ extends to an analytic subanalytic unit on $\overline{B_\varepsilon}$.
		
		We call a subanalytic cell $C \subset \R^{n+1}$ $l$-normalized if
		
		\begin{enumerate}
			\item $\pi_n(C)$, which we will call the \emph{base} of $C$, is $l$-rectilinear.
			\item $C$ is of the form
			
			\[C=\{(x,y) \in B \times \R \mid 1<y<a(x)\}\]
			
			for some $l$-prepared $a:B \to \R_{>1}$ or $a(x)=\infty$.
			The map $a$ is called the upper boundary of $C$.
		\end{enumerate} 
		
		Call $C$ \emph{balanced} if $a(x) \sim 1$ on $B$ and \emph{unbalanced} otherwise.
	\end{definition}
	
	\begin{remark}
		Notice that for $a(x)$ as in the above definition, if $a(x)=x_{>l}^Q U(x)$ is the prepared form of $a$, then necessarily $Q \le 0$. Moreover, $C$ is balanced if and only if $Q=0$.
	\end{remark}
	
	\begin{definition}
		Let $C \subset \R^{n+1}$ be an $l$-normalized unbalanced cell.
		Let $B$ be its base and let $a$ be its upper boundary.
		Take any $N > 0$ and let $\varepsilon > 0$ be sufficiently small such that $a(x)>N^2$ on $B_\varepsilon$.
		For such $N$ and $\varepsilon$, define
		\[C_{N, \varepsilon}=\left\{(x,y) \in B_\varepsilon \times \R \mid N<y<\frac{a(x)}{N}\right\}.\]
	\end{definition}
	
	Now, we define what it means for a power-constructible function to be $l$-prepared.
	For this we need the following notion of an $l$-bijection.
	
	\begin{definition}
		Let $F: C \subset \R^{n+1} \to A \subset \R^{m+1}$ be a subanalytic bijection.
		We say that $F$ is an $l$-bijection if:
		
		\begin{enumerate}
			\item $C$ is $l$-normalized.
			\item $F$ is of the form $F(x,y)= (F_1(x), F_2(x,y))$ and $F_1 \colon \pi_n(A) \to \R^m$ is a bijection onto its image.
		\end{enumerate}
	\end{definition}
	
	\begin{definition}
		Let $C \subset \R^{n+1}$ be an $l$-normalized cell and $\K \subset \C$ a subfield.
		If $f: C \to \C$ is $\K$-power-constructible, then we say that $f$ is $l$-prepared on $C$ if it is given as a finite sum of the form
		\[f (x,y)= \sum_b  y^{b} \sum_{rs \beta \gamma} x_{>l}^{r} (\log x_{>l})^{s} y^{\beta} (\log y)^{\gamma} f_{rs\beta \gamma b}(x,y),\]
		with $b\in \K$, $\Re(b) \in [0,1)$, $r \in \K^{n-l}$, $\beta\in \Z$, $s \in \N^{n-l}$, $ \gamma \in \N$. Each $f_{rs\beta \gamma b}: C_i \to \C$ is a $\psi$-function, where $\psi =(x,x_{>l}^{q_1}y^{-1}, x_{>l}^{q_2}y)$ with $q_1, q_2 \in \Z^{n-l}$ is a bounded analytic subanalytic map.
		If $a(x)=\infty$, then the $x_{>l}^{q_2} y$-term is not present.
		
	\end{definition}

\section{Preparation results}\label{sec:preparation results}
Throughout this section $\K$ denotes a subfield of $\C$. 
The main result of this section is in Theorem~\ref{th:PrepUnbalanced power constr}, where we prove that every power-constructible function can be written in a particularly useful \enquote{prepared way}, up to a bijection and cell decomposition. 
This preparation result is key for the proof of Theorem~\ref{th:main}. 
It is obtained by combining a variant of rectilinear preparation (cf.~\cite[Prop.\,6.1]{CM2013}) with a careful analysis of power series expansions.

The following proposition combines simultaneous tight preparation with rectilinear preparation.
\begin{proposition}\label{prop:prep for constructible} 
	Let $\mathcal{F}$ be a finite set of constructible functions $f: B \subset \R^{n+1} \to \C$ and $\mathcal{G}$ a finite set of subanalytic functions $f: B \subset \R^{n+1} \to \R$. There exists a partition of $B$ into subanalytic cells $B_i$ such that for each $i$:
	
	\begin{enumerate}
		\item either $B_i$ is the graph of a subanalytic map, or
		\item There is an $l$-bijection $F_i: C_i \to B_i$ and an integer $l \in \{0, \cdots, n\}$ such that:
		\begin{itemize} 
			\item $C_i$ is an $l$-normalized cell.
			\item For each $f \in \mathcal{F},$ we can write $f \circ F_i$ as the following finite sum:
			\[f \circ F_i = \sum_{rs\beta \gamma} x_{>l}^r (\log x_{>l})^s y^\beta (\log y)^\gamma \tilde{f}_{rs\beta\gamma} (x,y),\] 
			where each $s \in \N^{n-l}$, $\gamma \in \N$, $r\in \Q^{n-l}$, $\beta \in \Z$.
			\item For each $g \in \mathcal{G}$, we can write $$g(x,y)=x_{>l}^a y^b \tilde{f}(x,y),$$
			
			with $a \in \Z^{n-l}$ and $b \in \Z$.
			\item There is some function $\psi =(x,x_{>l}^{q_1}y^{-1}, x_{>l}^{q_2}y)$ with $q_1, q_2 \in \Z^{n-l}$, such that all $\tilde{f}_{rs\alpha\beta}$ are $\psi$-functions and all $\tilde{f}$ are $\psi$-units. 
		\end{itemize}
	\end{enumerate}
\end{proposition}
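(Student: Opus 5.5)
Throughout, write every $f \in \mathcal{F}$ as a finite sum of products of subanalytic functions $h_j$ and logarithms $\log g_k$ of positive subanalytic functions; denote by $\mathcal{H}$ the finite set of all these $h_j, g_k$, together with $\mathcal{G}$. The plan is to prepare all of $\mathcal{H}$ simultaneously with respect to the last variable, translate and rescale so the center disappears, and then apply rectilinear preparation in the base. Since $\log$ of a prepared positive function splits as a sum of logarithms of monomials plus the log of a unit, this yields the claimed shape.

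\textbf{Step 1: tight preparation in $y$.} Apply Theorem~\ref{th:prep_subanalytic} to $\mathcal{H}$. Thin cells give case (1). On a fat cell, each $h \in \mathcal{H}$ equals $a_h(x)\abs{y-\theta(x)}^{q_h}U_h(x,y-\theta(x))$, with a common center $\theta$ and special units $U_h$ that are $\varphi$-units for $\varphi=(a_1(x),\dots,a_{m}(x), c(x)\abs{y-\theta}^{-1/p}, d(x)\abs{y-\theta}^{1/p})$. Replace $y$ by $\pm(y-\theta(x))$, or by $y/\theta(x)$ in the tight case $y\sim\theta$. Then split the cell further according to whether $\abs{y}<1$ or $>1$ (or relative to the cell boundaries), and substitute $y \mapsto y^{p}$ or $y\mapsto y^{-p}$, and a translation by $1$ where needed. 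This normalizes the fiber to $1<y<a(x)$ and makes the unit arguments integer powers $y^{\pm 1}$ times functions of $x$. The composite map has the form $(x,y)\mapsto(x,F_2(x,y))$, so it is of $l$-bijection shape.

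\textbf{Step 2: rectilinear preparation of the base.} Collect all base data: $a_h$, the boundary $a$, the coefficients $c,d$ in $\varphi$, the $a_i$, and the factor coming from $\theta$. Apply rectilinear preparation to this finite family on $\pi_n$ of each cell, giving analytic isomorphisms $F_1\colon B\to \pi_n(\text{cell})$ with $B$ $l$-rectilinear and each base function equal to $x_{>l}^{r}V(x)$, where $V$ is a unit analytic on $\overline{B}$. Exponents in $x_{>l}$ may be rational. A further power substitution $x_i\mapsto x_i^{N}$ for $i>l$ (still rectilinear) makes them integral where the statement demands it, namely for $\mathcal{G}$ and for $q_1,q_2$. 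The exponents $r$ in the $\mathcal{F}$-expansion can stay rational. Composing gives $F_i=(F_1,F_2)$ on an $l$-normalized cell, after checking that $a\circ F_1$ is $l$-prepared, or infinite, and exceeds $1$.

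\textbf{Step 3: assembling.} Now $c(x)y^{-1}=x_{>l}^{q_1}y^{-1}\cdot V_1(x)$. Since $V_1$ is analytic in $x$ on the compact closure, the $\varphi$-functions become $\psi$-functions for $\psi=(x,x_{>l}^{q_1}y^{-1},x_{>l}^{q_2}y)$; the same holds for units. Each $h$ becomes $x_{>l}^{r}y^{\beta}\tilde f$ with $\tilde f$ a $\psi$-unit, which gives the $\mathcal{G}$ claim. Each $\log g$ becomes $\sum r_i\log x_i+\beta\log y+\log\tilde f$. Here $\log \tilde f$ is still a $\psi$-function, since $\log$ is analytic on the compact range of the nonvanishing $W$; after a sign split, this holds for the positive unit. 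Expanding the products gives the displayed sum.

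\textbf{Main obstacle.} I expect the hardest part to be ensuring the $\psi$-structure survives the substitutions. The unit coordinates $c(x)\abs{y-\theta}^{-1/p}$ must become $x_{>l}^{q_1}y^{-1}$ with bounded $\psi$ on the new cell. Moreover, $W\circ\varphi$ must be rewritable as $\tilde W\circ\psi$ with $\tilde W$ analytic on $\overline{\psi(C)}$, which requires the base units to extend analytically to the closure. This is exactly what rectilinear preparation provides, but boundedness of $x_{>l}^{q_1}y^{-1}$ on $1<y<a(x)$ must be checked case by case, possibly after further cell subdivision.
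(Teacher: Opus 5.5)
Your proposal follows the paper's proof essentially step for step: simultaneous tight preparation in $y$, then the substitutions (translating away $\theta$, reflecting, splitting at $y=1$, $y\mapsto y^{\pm p}$) that give a fiber $1<y<a(x)$ with integral exponents, then rectilinear preparation of all base data, a final power substitution in $x_{>l}$, and expansion of the logarithms of the prepared factors. Two small corrections: first, the lower boundary $b(x)$ of the fiber should be normalized to $1$ by the rescaling $y\mapsto b(x)y$, as the paper does, with $b$ added to the base data; a translation or $y\mapsto y/\theta(x)$ would move the center away from $0$ and destroy the monomial form in $y$. Second, the boundedness of $x_{>l}^{q_1}y^{-1}$ that you flag as the main obstacle is automatic, since it equals $c(x)y^{-1}/V_1(x)$ with $c(x)y^{-1}$ bounded and $V_1$ bounded away from $0$ on $\overline{B}$.
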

\begin{proof} 
	We will build the bijections $F_i$ by various substitutions.
	It will be obvious at the end that the composition of all of these is an $l$-bijection.
	We may assume that on $B$, each $f \in \cF$ is of the form
	
	\[f(x,y)=\sum_i c_i(x,y) \prod_j \log c_{i,j}(x,y)\]
	
	for subanalytic maps $c_i$ and $c_{i,j}$, where we assume that $c_{i,j}>0$ on $B$.
	We use simultaneous tight preparation (Theorem \ref{th:prep_subanalytic}) on all $c_i, c_{i,j}$ and $g_i \in \mathcal{G}$.
	Fix one of the cells $C_i$ in this decomposition.
	There is a center $\theta(x)$ on $C_i$ and a bounded analytic subanalytic map $\psi(x,y)=(A(x), B(x)y^{-1/p}, C(x) y^{1/p})$ such that, for each $f \in \cF$ and $g \in \cG$, we have
	\[f(x,y)=\sum_i d_{i}(x) \abs{y-\theta(x)}^{\beta_{i}} U_{i}(x,y-\theta(x)) (\log\abs{y-\theta(x)})^{\gamma_i},\]
	\[g(x,y)= d(x) U(x,y-\theta(x)) \abs{y-\theta(x)}^\beta \]
	where the $d_{i}, d$ are constructible, the $\beta_i,\beta$ are rational, the $U_{i},U$ are $\psi$-units and the $\gamma_i$ are non-negative integers (all depending on $f$ or $g$).
	After the substitution $y=y'+\theta(x)$, we may assume that $\theta(x)=0$ identically. 
	Then, by splitting $C_i$ into more cells to assume that $y \neq 0$ on $C_i$ and substituting $y=-y'$ if necessary, we may reduce to the case where $y>0$.
	By further splitting $C_i$, we may assume that either $y<1$ or $y>1$ everywhere.
	Finally, after substituting $y = y'^{n}$ for some integer $n$, we may assume that $y>1$ on $C_i$ and that all $\beta_{i},\beta$ are integers and that $p=1$. 
	A final rescaling of $y$ furthermore guarantees that the lower bound of the cell in the $y$-variable can be taken identically equal to 1.
	
	It follows that $C_i$ is of the form
	\[\{(x,y) \in \pi_n(C_i) \times \R \mid 1<y<a(x)\}.\]
	Finally, we use rectilinear preparation (see \cite{CM2013}) on all the subanalytic functions appearing in $d_{ij}(x), d_j(x)$, on the $A(x)$, $B(x)$ and $C(x)$ appearing in $\psi(x)$ and on $a(x)$. The unit $u(x)$ appearing in this step by the rectilinearization of $d_{ij}$ or $d_i$ is again strictly larger than 0 on the closure of the range of $\psi$.
	
	After this, the theorem follows upon noting that the logarithms of $x_{\le l}$ will be subanalytic and any map of the form $x_{\le l}^\gamma$ is automatically bounded away from $0$ and $\infty$, and thus analytic on the closure of $C_i$. This implies that we may take $\psi$ of the form $(x, x_{>l}^{q_1} y^{-1}, x_{>l}^{q_2} y)$. 
	One last rational power substitution on the $x_{>l}$-coordinates allows us to take $a, q_1, q_2, r \in \Z^{n-l}$.
\end{proof}
%
%
The following proposition is only necessary for the study of approximate suprema of \emph{power}-constructible functions which are not constructible.
For constructible functions, Proposition~\ref{prop:prep for constructible} suffices.
\begin{proposition} \label{prop: l-prep power constructible}
	Let $f: B \subset \R^{n+1} \to \C$ be a function in $\mathcal{C}^{\K}(B)$. There exists a partition of $B$ into subanalytic cells $B_i$ such that for each $i$:
	\begin{enumerate}
		\item either $B_i$ is the graph of a subanalytic map, or
		\item There is an $l$-bijection $F_i: C_i \to B_i$ such that 
		\begin{itemize}
			\item $C_i$ is an $l$-normalized cell.
			\item $f$ is $l$-prepared.
		\end{itemize}
	\end{enumerate}
\end{proposition}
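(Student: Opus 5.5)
We reduce to Proposition~\ref{prop:prep for constructible}. By definition, $f$ is a finite sum of products of subanalytic functions, logarithms $\log g$ of positive subanalytic functions, and powers $h^{c}$ with $h>0$ subanalytic and $c\in\K$. Let $\mathcal{F}$ consist of the constructible parts: the subanalytic factors and the logarithms. Let $\mathcal{G}$ consist of all the subanalytic bases $h$ that appear with a power $c \in \K$. Applying Proposition~\ref{prop:prep for constructible} to $\mathcal{F}$ and $\mathcal{G}$ gives a partition into graphs, where there is nothing to prove, and fat pieces $B_i$ with $l$-bijections $F_i\colon C_i\to B_i$ and $C_i$ $l$-normalized. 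On each such $C_i$:
\begin{itemize}
\item every constructible factor is a sum of terms $x_{>l}^r(\log x_{>l})^s y^\beta(\log y)^\gamma\tilde f$, with $\tilde f$ a $\psi$-function;
\item every $h\in\mathcal{G}$ becomes $x_{>l}^{a}y^{b}\tilde h$, with $a\in\Z^{n-l}$, $b\in\Z$, and $\tilde h$ a $\psi$-unit.
\end{itemize}
Since $h>0$, we may take $\tilde h>0$.

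Next we expand the powers. Because $x_{>l}>0$ and $y>1$, we have
\[h^c=x_{>l}^{ca}\,y^{cb}\,\tilde h^{\,c}.\]
The exponent $ca$ lies in $\K^{n-l}$, as required. For the $y$-power, write $cb=\beta'+b'$ with $\beta'\in\Z$ and $\Re(b')\in[0,1)$; then $b'\in\K$, because $\K\supset\Q$ and $cb\in\K$. The factor $y^{b'}$ goes into the outer sum over $b$, and $y^{\beta'}$ is absorbed into the integer power $y^\beta$. It remains to see that $\tilde h^{\,c}$ is a $\psi$-function. We have $\tilde h=W\circ\psi$ with $W$ analytic and nonvanishing on the compact set $\overline{\psi(C_i)}$. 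Replacing $\tilde h$ by $-\tilde h$ is not needed here, so $W>0$ on this compact set. Hence $W^c=\exp(c\log W)$ is analytic on a neighbourhood of $\overline{\psi(C_i)}$, and $\tilde h^{\,c}=W^c\circ\psi$ is a ($\C$-valued) $\psi$-function.

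Finally we assemble the pieces. Multiplying out the products, $f\circ F_i$ becomes a finite sum of terms
\[y^{b}\,x_{>l}^{r}(\log x_{>l})^{s}\,y^{\beta}(\log y)^{\gamma}\,\phi(x,y),\]
where $\phi$ is a product of $\psi$-functions for a common $\psi$, and is therefore itself a $\psi$-function. Collecting the terms by the value of $b$ gives exactly the $l$-prepared form. If $a(x)=\infty$, the third coordinate of $\psi$ is already absent by Proposition~\ref{prop:prep for constructible}.

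The main point to check is that a single $\psi$ works simultaneously for all the factors. This is guaranteed because all of them were fed into one application of Proposition~\ref{prop:prep for constructible}, which outputs a common $\psi$. One should also confirm that the final rational substitutions there, which made the exponents integral, are compatible with the powers: such a substitution only rescales $c$ by a rational number, so the exponents stay in $\K$.
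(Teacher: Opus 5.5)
Your proposal is correct and follows the paper's proof essentially step for step. You apply Proposition~\ref{prop:prep for constructible} with the constructible parts in $\mathcal{F}$ and the bases of the complex powers in $\mathcal{G}$, expand $h^c = x_{>l}^{ca}y^{cb}\tilde h^{\,c}$, check that $\tilde h^{\,c}$ is again a $\psi$-function (you justify this more carefully than the paper does), and split off the integer part of the $y$-exponent. One bookkeeping point needs tidying: do this splitting after multiplying together all the powers in a single product, as the paper does with $b_h=\sum_k b_{hk}k$, because a sum of several $b'$ with real part in $[0,1)$ can have real part $\ge 1$.
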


\begin{proof}
	By definition, there exist subanalytic functions $f_{hk}$, constructible functions $g_h$ and finitely many $k \in \K$, such that 
	\[f(x,y) = \sum_h g_h(x,y) \prod_{k} \abs{f_{hk}(x,y)}^{k}.\]
	Up to partitioning the domain, we may additionally assume that $f_{hk}(x,y) > 0$ everywhere. 
	Now apply Proposition~\ref{prop:prep for constructible} with $\mathcal{F} = \{g_h\}_h$ and $\mathcal{G} = \{f_{hk}\}_{h,k}$.
	Fix one resulting $l$-normalized cell $C$ and accompanying  $l$-bijection $F \colon C \to F(C) \subset B$.
	On $C$, there exists a bounded function $\psi = (x, x^{q_1}_{>l} y^{-1}, x_{>l}^{q_2}y)$, $q_1,q_2 \in \Z^{n-l}$, such that for each $h$, there are $a_{hk} \in \Z^{n-l}, b_{hk} \in \Z$ and $\psi$-units $\tilde{f}_{hk}$, satisfying:
	\[\prod_k f_{hk}^{k} \circ F(x,y)
		= \prod_k x_{>l}^{a_{hk}k} y^{b_{hk}k}\tilde{f}_{hk}(x,y)^{k} 
		=x_{>l}^{\sum_k a_{hk}k} \cdot \prod_k \tilde{f}_{hk}(x,y)^k \cdot y^{\sum_k b_{hk} k}.\]
	Since each $\tilde{f}_{hk}$ is a strictly positive $\psi$-unit, the same holds for $\tilde{f}_{h} \coloneqq \prod_k \tilde{f}_{hk}^k$. Moreover, write $a_h = \sum_k a_{hk} k$ and $b_h = \sum_k b_{hk} k$. 
	
	Furthermore, for each $g_h$, we have that
	\[g_h \circ F(x,y) = \sum_{rs \beta \gamma} x_{>l}^{r} (\log x_{>l})^s y^\beta (\log y)^\gamma \tilde{f}_{rs \beta \gamma h}(x,y),\]
	for $r\in \Z^{n-l}$, $\beta \in \Z$, $s \in \N^{n-l}, \gamma \in \N$ non-negative integers and certain $\psi$-functions $\tilde{f}_{r s \beta \gamma h}(x,y)$.
	%
	Combined, this means that
	\[f\circ F(x,y) =  \sum_h  y^{b_h} \sum_{rs\beta\gamma} x_{>l}^{a_h} x_{>l}^{r} (\log x_{>l})^{s} y^{\beta} (\log y)^{\gamma} \tilde{f}_{rs\beta \gamma h}(x,y) \tilde{f}_h(x,y).\]
	Define $f_{rs \beta \gamma h}(x,y)=\tilde{f}_{rs \beta \gamma h}(x,y) \tilde{f}_h(x,y),$ which is still a $\psi$-function. Write
	\[ y^{b_h}= y^{c_h+d_hi} = y^{c_h-\lfloor c_h \rfloor + d_hi} y^{\lfloor c_h \rfloor},\]
	for real numbers $c_h$, $d_h$ and relabel the indices $r,\alpha$ to rewrite $\tilde{f}_{rs \beta \gamma h} x_{>l}^{r+a_h} y^{\beta + \lfloor c_h \rfloor}$ as $\tilde{f}_{rs \beta \gamma h} x_{>l}^{r} y^{\beta}$. Finally, reorder the terms such that $b_h \neq b_{h'}$ if $h \neq h'$, adding the $\psi$-functions $f_{rs\beta \gamma h}$ together if necessary.
	This finishes the proof.  
	\end{proof}
	
	Before proving a preparation result for $l$-prepared functions, we recall the following obvious generalization of Lemma 4.12 from \cite{DVDD1988}. Let $\C\langle X\rangle$ denote the ring of power series over $\C$ in the variables $X_1,..., X_m$ which converge in some neighbourhood of the origin, and let $\C\{X\}$ be the subring of $\C\langle X\rangle$ consisting of the
	power series that converge in a neighbourhood of $[-1,1]^m$. 
	\begin{lemma}\label{veralgemening 4.12 Denef}
		Let $X = (X_1, ..., X_N), Y =(Y_1,\cdots,Y_M), N,M > 0$, and
		$f(X, Y) \in \C\{ X, Y \}$. Then there exists a positive integer $d$ such that $f(X, Y)$ can
		be written as
		$f(X,Y)= \sum_{\abs{i}<d} a_i(X)Y^i u_i(X,Y)$,
		with $a_i(X) \in \C\{X\}$, $u_i(X, Y) \in \C\langle X,Y\rangle$, and $u_i(X, Y)$ a unit in $\C\langle X,Y\rangle$.
	\end{lemma}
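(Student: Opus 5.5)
The plan is to follow the proof of Lemma 4.12 in \cite{DVDD1988} and replace the real coefficient ring by $\C$. Two tools carry the argument: Noetherianity of $\C\{X\}$-modules generated by monomials in $Y$, and a compactness argument over $[-1,1]^N$.

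\textbf{Step 1: an ideal of coefficients.} Write $f(X,Y)=\sum_{i\in\N^M} f_i(X)Y^i$. Each $f_i\in\C\{X\}$, since $f$ converges on a neighbourhood of $[-1,1]^{N+M}$. Consider the ideal $I\subset \C\{X\}[Y]$, or more conveniently the ideal of $\C\{X\}$-combinations, generated by the monomial terms $f_i(X)Y^i$. The ring $\C\{X\}$ is Noetherian: germs along the compact set $[-1,1]^N$ of convergent power series form a Noetherian ring by Frisch's theorem. By Dickson/Hilbert basis arguments in the polynomial ring $\C\{X\}[Y]$, there is a $d$ such that for every $j$ one can write
\[f_j Y^j=\sum_{\abs{i}<d} c_{ij}(X) Y^{j-i}\cdot f_i Y^i, \qquad c_{ij}\in\C\{X\},\]
with $c_{ij}=0$ unless $i\le j$.

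\textbf{Step 2: regrouping.} This gives the formal identity
\[f=\sum_{\abs{i}<d} f_i(X)Y^i\Bigl(1+\sum_{j\neq i} c_{ij}(X)Y^{j-i}\Bigr),\]
so setting $a_i=f_i$ and $u_i=1+\sum_{j} c_{ij}Y^{j-i}$ produces the desired shape. Two points remain. The first is to make the $u_i$ convergent near the origin. The second is to arrange that the $u_i$ are units, which requires the correction terms to vanish at $(0,0)$. For the second point I would choose the representation so that the constant-in-$Y$ part of the correction for $j$ with $j-i=0$ is absent, and then note that every other term carries a positive power of $Y$, hence vanishes at $Y=0$. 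If needed, the correction can also be pushed into a small neighbourhood of the origin in $X$ by rescaling. Then $u_i(0,0)=1$, and $u_i$ is a unit of $\C\langle X,Y\rangle$.

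\textbf{Step 3: convergence, the main obstacle.} The difficulty is that the coefficients $c_{ij}$ in a Noetherian representation come with no control on their size as $j$ grows. I would use the standard device of \cite{DVDD1988}: apply Weierstrass-type division by the finitely many generators $f_iY^i$, $\abs{i}<d$, which yields Cauchy estimates $\sup\abs{c_{ij}}\le K R^{\abs{j}}$ on a fixed neighbourhood. Alternatively, one can use the closedness of ideals in convergent power series rings (Krull/Artin–Rees) to obtain a convergent division with bounds. Either way this makes $\sum_j c_{ij}Y^{j-i}$ convergent near the origin, which is all that $\C\langle X,Y\rangle$ requires.

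Since the whole argument is linear algebra over a Noetherian ring of convergent series together with estimates valid for complex coefficients, passing from $\R$ to $\C$ (equivalently, treating real and imaginary parts, or simply noting that $\C\{X\}$ is Noetherian as well) changes nothing.
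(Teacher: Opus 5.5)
Your Steps 1 and 2 are the right skeleton. The paper gives no proof of its own, only calling the lemma an obvious generalization of Denef--van den Dries. But Step 3, the only non-formal point, is asserted rather than proved, and neither mechanism you name delivers it. The estimate $\sup|c_{ij}|\le KR^{|j|}$ does not come out of Weierstrass division. That divides by a single series regular in one variable, whereas you need ideal membership \emph{with bounds} for the several-generator ideals $(f_i : i\le j,\ |i|<d)$ of $\C\langle X\rangle$. Dividing by $f_iY^i$ in the variables $(X,Y)$ is not the relevant operation. Krull's intersection theorem and Artin--Rees give only $\mathfrak m$-adic closedness of ideals, which says nothing about the size of any particular choice of $c_{ij}$. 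So, as written, nothing guarantees that $1+\sum_j c_{ij}Y^{j-i}$ converges. (A side correction: $\C\{X\}$ is the ring of series convergent on a complex polydisc of polyradius $>1$, not germs along the real cube. Since you only need $c_{ij}\in\C\langle X\rangle$, Noetherianity of $\C\langle X\rangle$ suffices for Step 1, and Frisch is unnecessary.)

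The cleanest repair avoids estimates by changing what you prove. Let $R=\C\langle X,Y\rangle$ with maximal ideal $\mathfrak m$, and let $J=\sum_{|i|<d}f_iY^i\,(Y_1,\dots,Y_M)R$. Show that $g=f-\sum_{|i|<d}f_iY^i$ lies in $J$.
\begin{itemize}
\item For $|j|\ge d$, Step 1 gives $f_jY^j=\sum_{i\le j,\,|i|<d}c_{ij}f_iY^i\,Y^{j-i}$ with $j-i\neq 0$. Hence every truncation $\sum_{d\le|j|<n}f_jY^j$ lies in $J$.
\item The tail $\sum_{|j|\ge n}f_jY^j$ lies in $(Y)^n\subseteq\mathfrak m^n$.
\item So $g\in\bigcap_n(J+\mathfrak m^n)=J$, by Krull's intersection theorem for the finitely generated $R$-module $R/J$.
\end{itemize}
Writing $g=\sum_{|i|<d}f_iY^i\sum_kY_ke_{ik}$ with $e_{ik}\in R$ gives the lemma with $u_i=1+\sum_kY_ke_{ik}$, so $u_i(0,0)=1$. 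The $e_{ik}$ are new coefficients, not built from your $c_{ij}$.

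Alternatively, you can keep your route, but then you must cite a genuine division theorem with estimates (Grauert--Hironaka via standard bases). Apply it to each of the finitely many ideals $(f_i:i\in S)$, $S\subseteq\{i:|i|<d\}$. Then re-choose the $c_{ij}$ so that $\|c_{ij}\|_\rho\le C\|f_j\|_\rho$ in a weighted norm $\|h\|_\rho=\sum_\nu|h_\nu|\rho^\nu$ for a fixed small $\rho$. The right-hand side is bounded uniformly in $j$ because $f$ converges on a polydisc of polyradius $>1$.
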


\begin{theorem} \label{th:PrepUnbalanced power constr} 
	Let $C$ be an $l$-normalized unbalanced cell with base $B$ and upper boundary $a$.
	Let $f: C \to \R$ be an $l$-prepared fiberwise bounded function in $\mathcal{C}^{\K}$.
	There is a cell decomposition of $\pi_l(B)$ into finitely many cells $\pi_l(B)'$ such that, if we denote by $B'$, $C'$ the cells obtained from $B$ and $C$ by restricting $\pi_l(B)$ to $\pi_l(B)'$, there exist
	
	\begin{enumerate}
		\item numbers $N>0$ and $\varepsilon>0$ such that $C_{N, \varepsilon}'$ is well-defined,
		\item maps $A_{\alpha \beta\gamma}: B_\varepsilon' \to \R$ such that $A_{\alpha 0 \gamma} \in \mathcal{C}^\K(B'_\varepsilon)$,
		\item finitely many continuous maps $f_{\alpha \beta\gamma}: \overline{B_\varepsilon'} \times [0, N^{-1}] \to \R$ with $f_{\alpha \beta\gamma}(x,z) = 1 + O(z)$.
	\end{enumerate}
	
	such that on $C_{N,\varepsilon}'$
	\begin{align*}
		f(x,y) = 	&\sum_{\beta<0} \sum_{\alpha \gamma} A_{\alpha \beta \gamma}(x) f_{\alpha \beta \gamma} (x, y^{-1}) y^{\alpha i + \beta} (\log y)^\gamma \\ 
		+ 	&\sum_{\alpha\gamma} A_{\alpha 0 \gamma}(x)  y^{\alpha i} (\log y)^\gamma \\ 
		+ 	&\sum_{\beta > 0} \sum_{\alpha \gamma} A_{\alpha \beta \gamma}(x) f_{\alpha \beta \gamma}(x, y/a(x)) (y/a(x))^{\alpha i + \beta} (\log (y/a(x)))^\gamma.
	\end{align*}
	We sum over finitely many $\alpha,\beta \in \R$ and $\gamma \in \N$.
	If $a(x)=\infty$, then all terms with $\beta \ge 0$ vanish, except possibly $A_{\alpha 0 0}(x)$, with $\Re(k)=0$ (and we take $y/a(x)=0$).
	
\end{theorem}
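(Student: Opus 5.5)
The plan is to start from the $l$-prepared form
\[f(x,y)=\sum_b y^{b}\sum_{rs\beta\gamma} x_{>l}^{r}(\log x_{>l})^{s}y^{\beta}(\log y)^{\gamma}f_{rs\beta\gamma b}(x,y),\]
where each $f_{rs\beta\gamma b}=W\circ\psi$ with $\psi=(x,x_{>l}^{q_1}y^{-1},x_{>l}^{q_2}y)$ bounded and $W$ analytic on $\overline{\psi(C)}$. I will split the analytic functions $W$ into monomials in the last two variables. Set $u=x_{>l}^{q_1}y^{-1}$ and $v=x_{>l}^{q_2}y$. After a finite cell decomposition of $\pi_l(B)$ and shrinking $\varepsilon$, the closure of $\psi(C'_{N,\varepsilon})$ lies in a small box around points of the form $(x_0,u_0,v_0)$. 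On $C_{N,\varepsilon}$ we have $y>N$ and $y<a(x)/N$. Hence $u\to0$ as $N\to\infty$ whenever $q_1$ is "$\ge 0$-ish", and similarly for $v$. I first arrange, by comparing $q_1,q_2$ with the exponent $Q$ of $a=x_{>l}^QU$ and using boundedness of $\psi$ on $C$, that $x_{>l}^{q_1}\lesssim 1$ and $x_{>l}^{q_2}\lesssim a(x)^{-1}$. Then $u=O(y^{-1})$ and $v=O(y/a(x))$. Both are small on $C_{N,\varepsilon}$, so $W$ can be expanded as a convergent power series in $(u,v)$ around $u=v=0$, with coefficients analytic in $x$ on $\overline{B_\varepsilon}$.

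Next I apply Lemma \ref{veralgemening 4.12 Denef} to $W(x,u,v)$, taking $X=(u,v)$-free variables $x$ and $Y=(u,v)$. This gives finitely many terms $a_i(x)u^{i_1}v^{i_2}u_i(x,u,v)$ with units $u_i$. I also want the mixed monomials $u^{i_1}v^{i_2}$ with $i_1,i_2>0$ to be absorbed. Since $uv=x_{>l}^{q_1+q_2}$ depends only on $x$, every mixed monomial reduces to a pure power of $u$ or of $v$ times a function of $x$. Writing $u^{j}=x_{>l}^{jq_1}y^{-j}$ and $v^{j}=(x_{>l}^{q_2}a(x))^{j}(y/a(x))^{j}$, each term becomes one of three kinds: (coefficient in $x$)$\cdot y^{-j}\cdot$unit, or (coefficient)$\cdot(y/a)^{j}\cdot$unit, or a pure $x$-term. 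The unit can be normalized to $1+O(z)$ by dividing by its value at $u=v=0$ and absorbing that value into the coefficient. For $j<0$ I take $z=y^{-1}$, for $j>0$ I take $z=y/a$, and for $j=0$ the unit depends on both but equals its value at $(0,0)$ plus $O(u)+O(v)$.

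I then combine this with the outer factors $y^{b+\beta}(\log y)^\gamma$. The exponent $b+\beta$ has real part $c$. I regroup by the total real exponent $\beta':=\Re(b)+\beta-j$ in $y$, or by $\Re(b)+\beta+j$ in $y/a$, rewriting $y^{t}=a^{t}(y/a)^{t}$ and $\log y=\log(y/a)+\log a$. The binomial expansion turns these into terms of the form $A(x)(y/a)^{\alpha i+\beta}(\log(y/a))^{\gamma}$, where $\log a$, $a^t$ and $x_{>l}^{r}$ go into $A$. The split rule is as follows. Terms whose total real exponent is negative are written in $y$, and terms with positive exponent are written in $y/a$. Both may come from expansions of either type, so a term $y^{\beta}\,(y/a)^j$ is re-expressed with whichever variable has the correct sign, the other factor being absorbed. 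The $(1+O(z))$ factors are the delicate part here.

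The main obstacle is the real exponent $0$ level. There the statement demands exact coefficients $A_{\alpha0\gamma}\in\mathcal{C}^\K$ without a correction factor $f$. So the $O(y^{-1})$ and $O(y/a)$ remainders from the units at exponent $0$ must be pushed into the $\beta<0$ and $\beta>0$ sums. I would do this by writing unit$=c_0(x)+u\,\tilde u_1+v\,\tilde u_2$, using that $W$ is analytic, and expanding one more order. The leftover pieces then genuinely have nonzero exponent, and I renormalize them as $1+O(z)$ when their leading coefficient is nonzero. If it vanishes, I iterate, which terminates because of the finite bound $d$ from Lemma \ref{veralgemening 4.12 Denef}. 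The coefficients $A_{\alpha0\gamma}$ are finite combinations of $x_{>l}^{r}$, $\log x_{>l}$, $\log a$, $a^t$ and analytic functions of $x$, hence lie in $\mathcal{C}^\K(B'_\varepsilon)$. When $a=\infty$ there is no $v$-variable, so all $\beta>0$ terms are absent. Fiberwise boundedness as $y\to\infty$ then forces the $\beta=0$ terms with $\gamma>0$, and those with nonzero imaginary exponent, to cancel, leaving only $A_{\alpha00}$. This uses the Riemann–Lebesgue-type independence of $y^{\alpha i}(\log y)^\gamma$.
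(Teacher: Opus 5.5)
\textbf{Gap: the correction factors.} The proposal breaks down where you produce the factors $f_{\alpha\beta\gamma}$.

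Applying Lemma~\ref{veralgemening 4.12 Denef} with $Y=(u,v)$ gives units $u_i(x,u,v)$ that depend on both variables. After normalization you therefore only have $1+O(y^{-1})+O(y/a(x))$. The statement requires a function $f_{\alpha\beta\gamma}(x,y^{-1})=1+O(y^{-1})$, resp.\ $f_{\alpha\beta\gamma}(x,y/a(x))=1+O(y/a(x))$, and you do not get one. On $C_{N,\varepsilon}$ the variable $y$ ranges from $N$ to $a(x)/N$, so neither $y^{-1}$ nor $y/a(x)$ is dominated by the other. Moreover, the Taylor expansion of the unit contains monomials $u^jv^k$ of every net $y$-degree $k-j$, which contribute to all three levels.

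Using $uv=x_{>l}^{q_1+q_2}$ only on the finitely many leading monomials $u^{i_1}v^{i_2}$ does not separate the regimes. This affects the terms with $j\neq 0$ just as much as the level-$0$ terms you single out.

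The same issue breaks your regrouping step. Take a term such as $y^{-3}(y/a)\,(1+O(y/a))$. You cannot write it in the variable $y$ ``with the other factor absorbed'', because its correction factor spreads over negative, zero and positive $y$-degree.

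Your iteration at level $0$ does not repair this, for three reasons. The new remainders again depend on both $u$ and $v$. Their leading coefficients are functions of $x$ that may vanish on part of the domain, so you cannot normalize uniformly. And the bound $d$ obtained for $W$ says nothing about termination of this new process.

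\textbf{Missing idea: split by net $y$-degree first.} The paper splits the entire series by net $y$-degree before invoking the lemma.
\begin{enumerate}
\item Absorb the outer integer powers into the series: $y^{\beta}=x_{>l}^{-\beta q_2}v^{\beta}$ for $\beta\ge 0$, and $y^{\beta}=x_{>l}^{\beta q_1}u^{-\beta}$ for $\beta<0$. Use $(\log x_{>l})^{-1}$ and suitably shifted powers of $x_{>l}$ as auxiliary variables.
\item Expand completely and rewrite every monomial via $u^jv^k=x_{>l}^{jq_1+kq_2}y^{k-j}$. With $a=x_{>l}^{Q}R$, this yields
\[
\sum_{i<0}g_i(x)\,y^{i}+g_0(x)+\sum_{i>0}g_i(x)\bigl(R(x)y/a(x)\bigr)^{i}.
\]
Boundedness of $\psi$ on $C$ gives $q_1\ge 0$ and $q_2+Q\ge 0$. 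Hence all resulting $x_{>l}$-exponents are nonnegative and each piece converges.
\item Apply Lemma~\ref{veralgemening 4.12 Denef} separately to the two one-sided series, in the single variable $P=y^{-1}$, resp.\ $P=R(x)y/a(x)$.
\end{enumerate}
The units are then $1+P\cdot(\dots)$ and depend only on $x$ and that one variable. The level-$0$ part $g_0$ is exact, with no iteration needed. It is a finite sum of powers and logarithms of $x_{>l}$ times subanalytic functions of $x$, hence lies in $\mathcal{C}^{\K}$.

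\textbf{The case $a=\infty$.} Positive-degree terms are not absent merely because $v$ is missing. They arise from outer $y^{\beta}$ with $\beta>0$, or $y^b$ with $\Re b>0$, against low-order terms in $u$, and they must be removed using fiberwise boundedness. By contrast, the terms $A_{\alpha 00}\,y^{\alpha i}$ with $\alpha\neq 0$ are bounded and need not cancel.
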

\begin{proof}
	We start by assuming $a(x) \neq \infty$.
	By $l$-preparedness of $f$, we can write 
	\begin{equation}\label{eq:prepared}
		f(x,y)= \sum_{b \gamma}(\log y)^{\gamma}  y^{b} \sum_{rs\beta} x_{>l}^{r} (\log x_{>l})^{s} y^{\beta} f_{rs\beta \gamma b}(x,y),
	\end{equation}
	where the $f_{rs \beta \gamma b}$ are $(x, x_{>l}^{q_1} y^{-1}, x_{>l}^{q_2} y)$-functions. 
	Write $\varphi(x,y)=(x, x_{>l}^{q_1} y^{-1}, x_{>l}^{q_2} y)$ and $f_{rs \beta \gamma b} = V_{rs \beta \gamma b} \circ \varphi$.
	
	We claim that, up to partitioning $\pi_l(B)$ (and consequently adapting $B,C$), we may assume there exists fixed $\varepsilon,N$ such that each $V_{rs \beta \gamma b}$ is given by a single convergent power series on an open neighborhood of $\overline{\varphi(C_{N,\varepsilon})}$.
	
	By compactness of $\overline{\pi_l(B)}$, this claim follows if for each $c \in \pi_l(B)$ there exists an open ball $B'$ around $c$ along with $N,\varepsilon$ such that $V_{rs \beta \gamma b}$ is given by a single convergent power series on an open neighborhood of
	\[ \overline{\varphi( (B' \times \R^{n-l + 2}) \cap C_{N,\varepsilon})}. \]
	Note that since $\psi$ is a bounded function, so are $x_{>l}^{q_1} \cdot 1$ and $x_{> l}^{q_2} a(x)$.
	In particular, $x_{> l}^{q_1}/\sqrt{a(x)}$ and $x_{> l}^{q_2} \sqrt{a(x)}$ tend to zero, whence $(c,0) \in \R^{l + (n -l + 2)}$ belongs to the closure of the range of $\varphi$.
	So let $\varepsilon > 0$ be such that $V_{rs \beta \gamma b}$ is given by a single absolutely convergent power series on $B((c,0),2 \varepsilon) \subset \R^{l + (n- l + 2)}$.
	Without loss of generality, this power series may be taken centered around $(c,0)$.
	In what follows, we shall assume $c = 0$ for convenience of notation. 
	Note that this can be achieved by performing a coordinate change along an $l$-bijection.

	As we already observed, there is some constant $K \in \R_{>0}$ bounding $x_{>l}^{q_1}$ and $x_{> l}^{q_2} a(x)$. 
	This implies that for $(x,y) \in C_{N,\varepsilon}$, both $x_{>l}^{q_1} y^{-1}$ and  $x_{>l}^{q_2} y$ are bounded above by $K/N$.
	Choosing $N$ sufficiently large, it thus follows that 
	\[\varphi\left( (B(c,\varepsilon) \times \R^{n-l + 2}) \cap C_{N,\varepsilon}\right) \subset B( (c,0),\varepsilon). \]
	Hence, taking $B' = B(c,\varepsilon)$ finishes the proof of the claim.

	Next, we group the terms in the sum (\ref{eq:prepared}) above according to like powers of $\log y$ and $y^b$:
	\[f_\gamma^b(x,y)=\sum_{r s \beta} x_{>l}^{r} (\log x_{>l})^{s}  y^{\beta} f_{rs\beta \gamma b}(x,y).\]

	Write $U=x_{>l}^{q_1} y^{-1}$, $V=x_{>l}^{q_2} y$ for the component functions of $\varphi$. 
	We also introduce $T=(\log x_{>l})^{-1}$, $w_{\alpha r} = x_{>l}^{r-\alpha q_2+B}$, $v_{\alpha r} = x_{>l}^{r+\alpha q_1+B}$, and

	\begin{align*}
		g_{\gamma}^b(T, x, U, V, w, v) = &\sum_{rs,\beta \ge 0} w_{\beta r}T^{A-s}  V^\beta V_{rs\beta \gamma b}(x, U, V) \\
										&+\sum_{r s, \beta<0} v_{\beta r} T^{A-s} U^{-\beta} V_{rs\beta \gamma b}(x, U, V).
	\end{align*}

	Here $A$ and $B$ are integers such that all powers of $T$ and the real part of all powers of $x_{>l}$ appearing are strictly positive. 
	Note that
	\begin{align*}
		&(\log x_{>l})^{A} x_{>l}^{-B} f_{\gamma}^b(x,y) \\
		&= g_\gamma^b\left((\log x_{>l})^{-1}, x, x_{>l}^{q_1} y^{-1}, x_{>l}^{q_2} y, (x_{>l}^{r-\beta q_2+B})_{\beta r},(x_{>l}^{r+\beta q_1+B})_{\beta r} \right).
	\end{align*}
	By our claim above, $g_\gamma^b$ is given by a single power series which converges absolutely on a neighborhood of $\C^{l} \times \overline{\varphi(C_{N,\varepsilon})} \times \C^a \times \C^a$, where $a$ is the number of variables of type $w_{\beta r}$ (equivalently $v_{\beta r}$).
	Write $\mathbf{i} = (i_1,\ldots,i_5)$ and let
	\[g_\gamma^b(T, x, U, V, w,v)=\sum_{\mathbf{i}tu} G_{\mathbf{i} tu} T^{i_1} (x_{\le l}^{i_2}) x_{>l}^{i_3} U^{i_4} V^{i_5} w_{t} v_{u}\]
	be this power series expansion.
	Now, for $i\le 0$, we let
	\[g_{\gamma i}^b(T, x, w,v)=\sum_{i_5-i_4=i} G_{\mathbf{i} tu} T^{i_1} (x_{\le l}^{i_2}) x_{>l}^{i_3+i_4 q_1+i_5 q_2}w_t v_u.\]
	Suppose that $a(x)=x_{>l}^Q R(x)$ is the prepared form of $a(x)$.
	For $i>0$, we let
	\[g_{\gamma i}^b(T, x, w,v)=\sum_{i_5-i_4=i} G_{\mathbf{i} t u} T^{i_1} (x_{\le l}^{i_2}) x_{>l}^{i_3+i_4 q_1+i_5 q_2+i Q} w_t v_u.\]
	Note that since $\varphi$ is bounded, all exponents in $x_{>l}$ appearing are non-negative.
	Let $M$ be the largest power of $\log y$ appearing in \eqref{eq:prepared}.
	We now define
	\[g_{\gamma}^{-,b}(T, x, P, w,v)=\sum_{i<0} g_{\gamma i}(T, x, w,v) P^{-i},\]
	\[g_{\gamma}^{0,b} (T, x, w,v)=g_{\gamma 0}(T, x, w,v)\]
	and
	\[g_{\gamma}^{+,b}(T,x, P, w,v, Z)=\sum_{\gamma' \ge \gamma} \binom{\gamma'}{\gamma} Z^{M+\gamma-\gamma'}\sum_{i>0} g_{\gamma i}(T, x, w,v) P^i.\]

	Now from absolute convergence of the power series of $g_{\gamma}^b$ on $\C^{l} \times \overline{\varphi(C_{N, \varepsilon})} \times \C^a \times \C^b$, we can deduce that the power series of $g_{\gamma}^{-,b}$ converges absolutely on $\C^{l} \times \overline{B_\varepsilon} \times [0, N^{-1}] \times \C^a \times \C^b$ (after making $N$ a bit larger and $\varepsilon$ a bit smaller if necessary). Notice as well that there is some complex neighbourhood $D$ of $\overline{B_\varepsilon} \times [0,1/N]$ such that $g_\gamma^{-,b}$ converges on $\C^{l} \times D \times \C^a \times \C^b$.
	We can then use Lemma \ref{veralgemening 4.12 Denef} to write
	
	\[g_\gamma^{-,b}(T, x, P, w,v)=\sum_{i=-k}^{-1} A_{\gamma i}^b(T, x, w,v) P^{-i} f_{\gamma i}^b(T, x, P, w,v),\]
	
	where $f_{\gamma i}^b(T, x, P, w,v)=1+P u_{\gamma i}^b(T, x, P, w,v)$.
	This way of writing $g_\gamma^{-,b}$ is valid on a complex neighbourhood $W_2$ of $(0, c, 0, 0, 0)$.
	Again by compactness of $\overline{\pi_l(B)}$ we may assume that $ \{0\} \times \overline{\pi_l(B)} \subset \pi_n(W_2)$.
	By choosing $\varepsilon$ and $N$ respectively small and large enough, we can also make sure that the expression makes sense on $\overline{B_\varepsilon} \times [0, 1/N]$, after identifying $T=(\log x_{>l})^{-1}$, $w_{\beta s} = x_{> l}^{s- \beta q_2+B}, v_{\beta s} = x_{>l}^{s+ \beta q_1+B}$. Note that $f_{\gamma i}^b$ is continuous and $f_{\gamma i}^b(x,P) = O(P)+1.$
	
	Similarly, $g_\gamma^{0,b}$ automatically already makes sense on $B_\varepsilon$, as $g_\gamma^b$ converges absolutely on $\C^{l} \times \overline{\varphi(C_{N, \varepsilon})} \times \C^a \times \C^b$.
	
	Then the power series for $g_\gamma^{+,b}$ converges absolutely on $\C^{l} \times \overline{B_\varepsilon} \times [0, N^{-1}]\times \C^a \times \C^b \times \C$, after we make $N$ sufficiently large and $\varepsilon$ sufficiently small (which can be seen by letting $P=R(x) y/a(x)$).
	
	Similar to $g_\gamma^{-,b}$ we can then write
	\[g_{\gamma}^{+,b}(T, x, P, w,v)=\sum_{i=1}^k A_{\gamma i}^b(T, x, w,v) P^i f_{\gamma i}^b(T, x, P, w,v)\]
	where $f_{\gamma i}^b(T, x, P, w_1, \cdots, w_m)=1+P u_{\gamma i}^b(T, x, P, w_1, \cdots, w_m)$.
	Let $R$ be an upper bound for $\abs{R(x)}$.
	We can then, similarly to above, assume that this expression makes sense on $\overline{B_\varepsilon} \times [0, (RN)^{-1}]$, after identifying $T=(\log x_{>l})^{-1}$,$w_{\beta s} = x_{> l}^{s- \beta q_2+B}, v_{\beta s} = x_{>l}^{s+ \beta q_1+B}$, $Z=(\log a(x))^{-1}$.
	
	The theorem if $a(x) \neq \infty$ now follows as
	\begin{align*}
		f(x,y)=& (\log x_{>l})^{A} x_{>l}^{-B}\Bigl(\sum_{\gamma b} 	 y^b (\log y)^\gamma g_{\gamma}^{-,b}((\log x_{>l})^{-1}, x, y^{-1},x_{> l}^{s- \beta q_2+B}, x_{>l}^{s+ \beta q_1+B}) \\
		+	&y^b (\log y)^\gamma g_\gamma^{0,b}( (\log x_{>l})^{-1}, x,x_{> l}^{s- \beta q_2+B},x_{>l}^{s+ \beta q_1+B}) \\
		+ 	&y^b (\log y/a(x))^\gamma (\log a(x))^{M} \\
		&  \cdot g_{\gamma}^{+,b}((\log x_{>l})^{-1}, x, R(x) y/a(x),x_{> l}^{s- \beta q_2+B}, x_{>l}^{s+ \beta q_1+B}, \log (a(x))^{-1})\Bigr).
	\end{align*}
	
	Indeed, note that necessarily
	 \[A_{\alpha 0 \gamma}(x)=(\log x_{>l})^A x_{>l}^{-B} g_{\gamma}^{0,b}((\log(x_{>l})^{-1},x, x_{>l}^{s-\beta q_2+B}, x_{>l}^{s+\beta q_1+B}),\]
	  where $b=\alpha i$.
	From the proof it is readily verified that these are $\K$-power-constructible.
	
	Finally, we need to consider what happens if $a(x)=\infty$.
	We can follow a similar approach.
	Note only that we do not have $V$ available.
	Hence, if we have some power $y^{\beta}$ present with $\Re(\beta)>0$, we have to introduce $U^{-\beta}$.
	
	We can then write
	
	\[V_{r s \beta \gamma t}(x, U)=\sum_{i=0}^{-\beta-1} V_{r s \beta \gamma t i} (x) U^i+U^{-\beta} V_{r s \beta \gamma t -\beta}(x, U).\]
	
	Now, expanding $g_{\gamma}((\log x_{>l})^{-1}, x, U)$ like this, we conclude that if it is to be fiberwise bounded, all the coefficients of the negative powers of $U$ must cancel.
	After this, we proceed in exactly the same way.
	
	In the end, it follows that $A_{\alpha 0 \gamma}(x)=0$ for $\gamma \neq 0$ by fiberwise boundedness, which finishes the proof.
\end{proof}

	\section{Unbalanced cells}\label{sec:unbalanced}
	The main goal of this section is to prove Theorem~\ref{th:Unbalanced} below. 
	Essentially, this is a version of the main statement, but only for unbalanced cells, i.e. cells where $\lim_{x\to 0} a(x) = + \infty$ (see Definition~\ref{def: rectilinear cells}). Throughout this section, $\K \subset \C$ denotes a subfield.

	\begin{theorem}\label{th:Unbalanced}
		Let $C \subset \R^{n+1}$ be an $l$-normalized unbalanced cell with base $B$, and let
		$f: C \to \R \in \mathcal{C}^{\K}$ be $l$-prepared and fiberwise bounded on $C$. 
		Then there exist $\varepsilon>0$ and $N>0$ such that $C_{N, \varepsilon}$ is well-defined, and finitely many maps $g_1, g_2, ..., g_k \in \mathcal{C}^{\K}(B_\varepsilon)$ and a constant $C>0$ such that
		
		\[\frac1C \max\{\abs{g_1(x)}, ..., \abs{g_k(x)}\} \le \sup_{y \in C_{N, \varepsilon, x}} \abs{f(x,y)} \le C \max\{\abs{g_1(x)}, ..., \abs{g_k(x)}\}\]
		
		for all $x \in B_\varepsilon$.
		
		Moreover, if $\K \subset \R$, then there exist subanalytic maps $a_1, ..., a_k: B_\varepsilon \to C_{N, \varepsilon}$, for which $\pi_n \circ a_i=\mathrm{Id}$, such that we may take $g_i=f \circ a_i$.
	\end{theorem}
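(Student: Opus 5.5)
The plan is to apply Theorem~\ref{th:PrepUnbalanced power constr} to $f$ on $C$, which (after refining $\pi_l(B)$, harmless since there are finitely many pieces) writes $f$ on $C_{N,\varepsilon}$ as a sum of three groups. The \emph{left} terms carry $\beta<0$ and the factor $f_{\alpha\beta\gamma}(x,y^{-1})$. The \emph{middle} terms are $A_{\alpha0\gamma}(x)y^{\alpha i}(\log y)^\gamma$. The \emph{right} terms carry $\beta>0$ in the variable $y/a(x)$. I then substitute $t=\log y$ and put $L(x)=\log a(x)$, so that $t$ ranges over $(\log N, L(x)-\log N)$. Unbalancedness gives $L(x)\to\infty$ as $x_{>l}\to0$, and after shrinking $\varepsilon$ we may assume $L(x)$ is as large as needed.

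In these coordinates the function is an exponential polynomial. The left terms are $A(x)e^{(\beta+i\alpha)t}t^\gamma(1+O(e^{-t}))$ and decay from the left endpoint. The right terms are the mirror image in $s=L(x)-t$. The middle terms are $\sum A_{\alpha0\gamma}(x)e^{i\alpha t}t^\gamma$, polynomials in $t$ with oscillating coefficients. The candidate approximants are the functions
\[|A_{\alpha\beta\gamma}(x)| \ \ (\beta\neq0),\qquad |A_{\alpha0\gamma}(x)|\,L(x)^\gamma .\]
These lie in $\mathcal{C}^{\K}(B_\varepsilon)$, because $A_{\alpha0\gamma}\in\mathcal{C}^\K$ by Theorem~\ref{th:PrepUnbalanced power constr}, $\log a$ is constructible, and the $A_{\alpha\beta\gamma}$ with $\beta\ne0$ are built from the same power-constructible data as in that proof. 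I must check this last point; otherwise I replace them by explicit constructible expressions. Let $M(x)$ denote the maximum of these candidates.

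The upper bound $\sup_y|f|\le C\,M(x)$ is immediate. Each left or right term is bounded by its coefficient times $\sup_t e^{\beta t}t^\gamma$ over $t\ge\log N$ (and symmetrically on the right), which is a constant. Each middle term is bounded by $|A_{\alpha0\gamma}|L^\gamma$.

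The main obstacle is the lower bound, which must hold uniformly in the unknown coefficients and the perturbations $f_{\alpha\beta\gamma}$. I would use three windows. On a fixed-length window $t\in[\log N,\log N+K]$, the functions $e^{(\beta+i\alpha)t}t^\gamma$ together with $e^{i\alpha t}t^\gamma$ are linearly independent. By finite-dimensional norm equivalence, the sup over this window of the unperturbed sum dominates the largest left coefficient, up to the middle and right contributions. The right terms are negligible there, being of size $e^{-\beta(L-t)}$. The perturbation $O(e^{-t})=O(1/N)$ is absorbed by taking $N$ large. The same argument at the right window controls the right coefficients.

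For the middle coefficients I would rescale $t=L(x)u$ with $u\in[\delta,1-\delta]$. There the left and right terms are $O(e^{-c\delta L})\cdot M(x)$, and $\sum_\gamma A_{\alpha0\gamma}L^\gamma e^{i\alpha Lu}u^\gamma$ is a polynomial in $u$ with oscillating coefficients. To show its sup dominates $\max|A_{\alpha0\gamma}|L^\gamma$ uniformly in $L$, I would combine two facts. First, an $L^2$ average over $u$: since the frequencies $\alpha L$ are distinct, the cross terms vanish as $L\to\infty$ by Riemann--Lebesgue. Second, norm equivalence for polynomials of degree $\le\max\gamma$ on $[\delta,1-\delta]$. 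A compactness/contradiction argument over normalized coefficient vectors would make the constants uniform. Combining the three windows gives $\sup|f|\ge C^{-1}M(x)$, with the left/right windows handled first and the middle ones bootstrapped by the smallness of the boundary terms there.

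For the ``moreover'' part, when $\K\subset\R$ there are no $\alpha$'s. I would then choose finitely many subanalytic sample points: $y=N e^{j}$ for $j\le K$, $y=a(x)N^{-1}e^{-j}$, and $y=a(x)^{u_j}$ for finitely many fixed $u_j\in(\delta,1-\delta)$. Each is a subanalytic function of $x$ with values in $C_{N,\varepsilon,x}$. The norm-equivalence arguments above can be run with these finitely many evaluation points in place of the sups, since a polynomial of degree $d$ is determined by $d+1$ values and the non-oscillating exponential sums on a window likewise are. This yields $\sup|f|\sim\max_i|f(a_i(x))|$ with $a_i(x)=(x,y_i(x))$.
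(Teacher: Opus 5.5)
Your analytic argument that $\sup_y\abs{f(x,y)}\sim M(x)$ is sound. However, your approximants are not shown to be power-constructible, and that is a genuine gap.

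Theorem~\ref{th:PrepUnbalanced power constr} only asserts $A_{\alpha0\gamma}\in\cC^\K(B'_\varepsilon)$, and its proof does not give more. The coefficients with $\beta\neq0$ come from applying Lemma~\ref{veralgemening 4.12 Denef} to power series in $(T,x,w,v)$ with $T=(\log x_{>l})^{-1}$, and in $Z=(\log a(x))^{-1}$ on the right. So a priori $A_{\alpha\beta\gamma}$ involves a convergent power series in $1/\log x_{>l}$, not a polynomial in $\log x_{>l}$ with subanalytic coefficients. Your fallback of ``explicit constructible expressions'' is never specified.

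The repair is already in your toolkit. In each edge window, fix finitely many $\tau_j\in(0,K)$, chosen via Lemma~\ref{functies zijn independent} so that evaluation is injective on the joint span of the edge and middle monomials $y^{\beta+i\alpha}(\log y)^j$. Take $f(x,Ne^{\tau_j})$ and $f(x,a(x)N^{-1}e^{-\tau_j})$ as approximants, and keep $A_{\alpha0\gamma}(x)L(x)^\gamma$ for the middle. The sampled form of your window argument gives $M_\pm\lesssim\max_j\abs{f(x,y^\pm_j)}+o(1)M_\mp$, which suffices. It also works for complex exponents, because you only sample at the edges. This is exactly the paper's choice: its approximants are $f(x,a_j^\pm(x))$ and the components of $p\bigl((A_{\alpha0\gamma}(x))_{\alpha\gamma}\bigr)$, which for small $\varepsilon$ is equivalent to your $\max\abs{A_{\alpha0\gamma}}L^\gamma$.

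Beyond this, your route differs from the paper's mainly in how the three regimes are combined. The paper uses three ingredients:
\begin{itemize}
\item a one-sided sampling lemma for the $\beta<0$ sums alone, with the decay bound $\abs{h(y)}\lesssim(2N/y)^{A}\max_j\abs{h(2Nd_j)}$ (Lemma~\ref{le:uniformUnboundedNeg});
\item a Riemann--Lebesgue lemma giving a good point in the middle range;
\item a three-case analysis, where the weight $3C$ on the middle norm forces the middle contribution at edge sample points below a third of the maximum.
\end{itemize}
Your joint norm equivalence on fixed-length edge windows absorbs that interference into one span and avoids the case split.

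The cost is a point you should state. The norm-equivalence constant is uniform in $N$ only for coefficients in the shifted basis $e^{(\beta+i\alpha)\tau}\tau^j$, $\tau=t-\log N$. Converting back to the $A_{\alpha\beta\gamma}$ costs powers of $\log N$. This is harmless against the $O(1/N)$ perturbation, but it is exactly why the paper's lemma carries the $(\log y)^{2b(K)}$ hypothesis.

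Finally, you never treat $a(x)=\infty$, which is allowed for unbalanced cells. Then there is no $L(x)$ and no right window, and the middle part is $\sum_\alpha A_{\alpha0}(x)y^{i\alpha}$ on an unbounded range. It can be handled by a mean-square average over a long interval far to the right, or by sampling as in Lemma~\ref{lemma: no a(x)}.
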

	
	\begin{lemma} \label{functies zijn independent}
		For any $N > 0$, and any finite set  
		\[  A \subset S=\{y^{\alpha} (\log y)^\gamma \mid \alpha \in \C, \gamma \in \N \}\] 
		of functions on a non-empty open interval $I \subset \R_{>0}$, there exist $d_{0},\ldots,d_{\abs{A}-1} \in I$ such that the set of vectors
		\[\{(h(d_0),\ldots,h(d_{\abs{A}-1})) \mid  h \in A \} \subset \C^{|A|} \] 
		is linearly independent. 
	\end{lemma}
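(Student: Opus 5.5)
The plan is to reduce the statement to the linear independence of the functions in $A$ as elements of the vector space of complex-valued functions on $I$, and then use a standard determinant (Vandermonde-type) argument. The parameter $N$ plays no role and may be ignored. Write $A=\{h_1,\dots,h_m\}$ with $m=\abs{A}$, the $h_j$ pairwise distinct.

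\emph{Step 1: functional independence.} First I show that $h_1,\dots,h_m$ are linearly independent over $\C$ as functions on $I$. Substitute $y=e^t$, so $t$ ranges over a non-empty open interval $J=\log I$. Then $y^{\alpha}(\log y)^{\gamma}$ becomes $t^\gamma e^{\alpha t}$. These are entire functions of $t$, so a linear relation $\sum_j c_j t^{\gamma_j}e^{\alpha_j t}=0$ on $J$ extends to all of $\C$ by the identity theorem. Distinct pairs $(\alpha,\gamma)$ give linearly independent exponential polynomials; this is the classical fact, which I would prove by induction on the number of distinct exponents. Multiply by $e^{-\alpha_1 t}$ and differentiate $\gamma_{\max}+1$ times, where $\gamma_{\max}$ is the largest power of $t$ occurring with exponent $\alpha_1$. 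This kills the $\alpha_1$-block. Each other block $p(t)e^{(\alpha-\alpha_1)t}$ with $\alpha\ne\alpha_1$ maps to $q(t)e^{(\alpha-\alpha_1)t}$ with $\deg q=\deg p$, and $q\neq 0$ when $p\ne0$. By induction all the other coefficients vanish. The remaining relation then reads $\sum c_j t^{\gamma_j}=0$, a polynomial identity, which forces the remaining coefficients to vanish as well.

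\emph{Step 2: choosing points.} Next I construct $d_0,\dots,d_{m-1}\in I$ inductively so that for each $k$ the $k\times k$ minor $\det(h_j(d_i))_{0\le i<k,\,1\le j\le k}$ is nonzero. This needs an ordering of $A$, and any ordering works.
\begin{itemize}
\item For $k=1$: $h_1$ is not identically zero on $I$ by Step 1, so some $d_0$ has $h_1(d_0)\neq0$.
\item Inductive step: given $d_0,\dots,d_{k-1}$ with nonzero $k\times k$ minor, consider the function
\[D(y)=\det\begin{pmatrix} (h_j(d_i))_{i<k,\, j\le k+1}\\ (h_j(y))_{j\le k+1}\end{pmatrix}.\]
Expanding along the last row gives $D=\sum_{j\le k+1}c_j h_j$. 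Here $c_{k+1}$ is, up to sign, the nonzero previous minor.
\item By Step 1, $D$ is not identically zero on $I$, so we may pick $d_k\in I$ with $D(d_k)\neq 0$.
\end{itemize}
At $k=m$ the $m\times m$ matrix $(h_j(d_i))$ is invertible. Hence its columns, which are exactly the vectors $(h(d_0),\dots,h(d_{m-1}))$ for $h\in A$, are linearly independent.

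\emph{Main obstacle.} The only genuinely nontrivial point is Step 1, namely the linear independence of $t^\gamma e^{\alpha t}$ for distinct complex $\alpha$ when the $\gamma$ vary. The subtlety is that the exponents $\alpha$ are complex, so a growth-rate argument as $t\to\infty$ fails: exponents with equal real part are not separated by growth. I would therefore use the differentiation and induction argument above rather than asymptotics. It works uniformly for complex exponents and, via analytic continuation, only requires knowing the relation on the interval $J$.
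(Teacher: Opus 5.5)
Your proposal is correct and follows the paper's route. You first establish linear independence of the functions $y^{\alpha}(\log y)^{\gamma}$ on $I$ by applying annihilating differential operators (your $\frac{d}{dt}$ after $y=e^t$ is the paper's $x\frac{d}{dx}$), and then obtain the evaluation points by linear algebra; the only difference is that you build the points one at a time via nonvanishing minors, whereas the paper argues that the vectors $(h_1(x),\ldots,h_k(x))$, $x\in I$, must span $\C^k$, since otherwise a vector orthogonal to their span would yield a linear relation among the $h_i$.
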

	\begin{proof}
		We first note that $S$ is a linearly independent set of functions on $I$.
		Indeed, suppose towards a contradiction that it is not.
		Then there exist $P_0(x),\ldots,P_n(x)$ of the form
		\[ P_j(x) = \sum_{i=0}^{N_j} a_{ij} x^{\alpha_{ij}} ,\]
		with $\alpha_{ij} \in \C$ satisfying $\alpha_{ij} \neq \alpha_{kj}$ for $i \neq k$, and $a_{ij} \in \C \setminus \{0\}$ such that $P_n$ is nonzero and
		\begin{equation} \label{eq:lin_dep}
			f(x) = \sum_{j=0}^n P_j(x) \log(x)^{j} = 0
		\end{equation}
		on $I$.
		
		First suppose that $n > 0$.
		If $P_n$ is non-constant, then take any $\alpha_{i n} \neq 0$ and consider the differential operator $T = (x \frac{d}{dx} - \alpha_{i n})$.
		Then $Tf = 0$ is again a non-trivial equation of the form (\ref{eq:lin_dep}), but with $P_n$ consisting of less terms.
		Inductively, we thus reduce to the case where $P_n$ is constant.
		Then $\frac{df}{dx} = 0$ is still of the form (\ref{eq:lin_dep}), but with smaller $n$.
		
		By induction it thus suffices to deal with the case $n = 0$.
		So take the minimal $N \in \N \setminus \{0\}$ for which there exist $a_0,\ldots,a_N \in \C \setminus\{0\}$ and $\alpha_0,\ldots,\alpha_N \in \C$ such that
		\[ \sum_{i=0}^N a_i x^{\alpha_i} = 0 \]
		on $I$. Without loss of generality $\alpha_0 = 0$. Then after taking the derivative on both sides, we find an equation of the same form, but with less terms.
		This contradiction shows that $S$ must be linearly independent.
		
		Now write $A = \{h_1(x),\dots,h_k(x)\}$ for pairwise distinct $h_i$.
		Note that it follows that $A$ is linearly independent.
		For any choice of $d_0,\dots,d_{k-1} \in I$, it holds that if the set $\{ (h_i(d_0),\ldots,h_i(d_{k-1}))\}_i$ is linearly dependent, then so is $\{ (h_1(d_i),\ldots,h_k(d_{i}))\}_i$.
		So suppose towards a contradiction that 
		\[\operatorname{span}((h_1(x), \cdots, h_k(x)): x \in U)\]
		has dimension at most $k-1$ and choose a vector $(a_1,\ldots,a_k)$ in its orthogonal complement.
		It follows that for all $x \in I$
		\[ \sum a_i h_i(x) =0,\]
		contradicting the linear independence of $A$.
	\end{proof}

	\begin{lemma}\label{le:uniformUnboundedNeg}  
		Let $K \subset [0, 2 \pi) \times \R_{<0} \times \N$ be a finite set.
		Denote $b(K)=\max\{\gamma \mid (\alpha, \beta, \gamma) \in K\}.$
		Then there exists a real number $\delta(K) > 0$ and constants $d_0, \cdots, d_{\abs{K}-1} \in [1, 2]$ such that the following holds.
		Let $N$ be a fixed positive integer and consider functions $f_{\alpha\beta\gamma}$ for $(\alpha,\beta, \gamma) \in K$ such that $\abs{(f_{\alpha\beta \gamma}(y) - 1)(\log y)^{2b(K)}} < \delta(K)$ for all $ y \in (N,+\infty)$. 
		Then, for any tuple $c= (c_{\alpha\beta \gamma})_{(\alpha,\beta, \gamma) \in K}$ the function 
		\[ h(y) = \sum_{(\alpha,\beta, \gamma) \in K}c_{\alpha \beta \gamma} f_{\alpha \beta \gamma}(y) y^{\alpha i}  y^\beta(\log y)^\gamma  \]
		satisfies
		\[\sup_{y>N} \abs{h(y)} \sim \max_{0 \le j <\abs{K}} \abs{h(2Nd_j)},\]
		where the implicit constant depends only on $K$ (and not on the coefficients $c_{\alpha\beta\gamma}$).
		Moreover, there exists rational numbers $C(K), A(K) > 0$ (depending only on $K$) such that for $y > N$:
		\[\abs{h(y)} \le C(K) (2N)^{A(K)} y^{-A(K)} \max_{0 \le j<\abs{K}} \abs{h(2Nd_j)}.\]
	\end{lemma}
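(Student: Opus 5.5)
The plan is to rescale to a fixed window, control $h$ there by a finite-dimensional norm equivalence, and then get decay beyond the window from the negative exponents $\beta$.

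\emph{Step 1: rescaling.} Write $y = 2Nt$ with $t > 1/2$. Then
\[y^{\alpha i+\beta}(\log y)^\gamma = (2N)^{\alpha i+\beta} t^{\alpha i+\beta}(\log(2N)+\log t)^\gamma.\]
Expand $(\log 2N+\log t)^\gamma$ binomially. This shows that on the window $t\in[1,2]$ the function $h(2Nt)$ is a combination of the basis functions
\[\{t^{\alpha i+\beta}(\log t)^{\gamma'} \mid (\alpha,\beta,\gamma)\in K,\ \gamma'\le\gamma\},\]
perturbed by the factors $f_{\alpha\beta\gamma}$.

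The coefficients depend on $N$ through powers of $\log 2N$, which is an obstacle to a constant uniform in $N$. To avoid it, I would instead work directly with the finite set of functions
\[e_{\alpha\beta\gamma}(y) = y^{\alpha i+\beta}(\log y)^\gamma \quad\text{on } [2N,4N],\]
normalized by dividing by $(2N)^{\beta}(\log 2N)^\gamma$. After the substitution $y=2Nt$ the normalized functions are
\[t^{\alpha i+\beta}\,(2N)^{\alpha i}\,\Bigl(1+\frac{\log t}{\log 2N}\Bigr)^{\gamma}.\]
The factor $(2N)^{\alpha i}$ has modulus one and can be absorbed into the coefficient. For $N$ large the remaining family converges to $t^{\alpha i+\beta}$, which may be degenerate when the same $(\alpha,\beta)$ occurs with different $\gamma$. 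The cleaner route is therefore to use the upper-triangular change of basis in powers of $\log t$ coming from the binomial expansion, and to apply Lemma \ref{functies zijn independent} to the genuine basis
\[\{t^{\alpha i+\beta}(\log t)^{\gamma'}\}\]
on $I=(1,2)$. That lemma yields points $d_j\in(1,2)$, independent of $N$, at which the evaluation matrix $M$ is invertible.

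\emph{Step 2: equivalence of norms on the window.} Because $M$ is invertible, the coefficient vector of $h(2Nt)$ in the genuine basis is controlled by $\max_j|h(2Nd_j)|$. This gives $\sup_{t\in[1,2]}|h(2Nt)|\lesssim\max_j|h(2Nd_j)|$ when $f\equiv1$. The perturbation $f-1$ is handled using the hypothesis $|f-1|(\log y)^{2b}<\delta$. It produces an error that is a small multiple of
\[\sum|c_{\alpha\beta\gamma}|(2N)^\beta(\log 2N)^{\gamma-2b}.\]
This is dominated by the size of the coefficients of the expanded basis, whose largest term involves $(\log 2N)^{\gamma}$. Choosing $\delta(K)$ small relative to $\|M^{-1}\|$ absorbs the error. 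The careful bookkeeping of how the $\log 2N$ factors distribute between the original coefficients and the expanded ones is the main obstacle. I would handle it by noting that the map from the $c$'s to the expanded coefficients is triangular with identity diagonal blocks after scaling, so it is uniformly invertible with polynomial losses in $\log 2N$ of degree at most $b$. The exponent $2b$ in the hypothesis is exactly what covers those losses.

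\emph{Step 3: the tail $y>4N$.} Since every $\beta<0$, let $-A(K)$ be strictly between $0$ and $\max\beta$ and rational. For $y\ge2N$ we have the termwise bound
\[|c\,y^{\alpha i+\beta}(\log y)^\gamma|\le|c|\,(2N)^{\beta}(\log 2N)^\gamma\,C\,(y/2N)^{-A},\]
using that $y^{\beta+A}(\log y)^\gamma$ is eventually decreasing. Uniformity here needs $N$ larger than a constant depending on $K$; smaller $N$ are absorbed into $C(K)$. Combining this with the coefficient control from Step 2 gives the stated decay estimate. Taking the supremum then shows $\sup_{y>N}|h|\lesssim\max_j|h(2Nd_j)|$.

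The window $(N,2N)$ is treated the same way: there the functions grow at most by a bounded factor, and applying the Step 2 argument to $[N,2N]$ gives the bound. The reverse inequality, $\max_j|h(2Nd_j)|\le\sup_{y>N}|h|$, is trivial.
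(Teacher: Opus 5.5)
\textbf{Steps 1--2 agree with the paper.} There you follow the paper's argument:
\begin{itemize}
\item rescale $y=2Nt$ and write $g_c(2Nt)=g_{c'}(t)$ in the fixed basis $t^{\alpha i+\beta}(\log t)^{\gamma'}$;
\item obtain $\|c'\|_1\le C(K)\max_j|g_c(2Nd_j)|$ from Lemma~\ref{functies zijn independent} and equivalence of norms;
\item handle the perturbation by inverting the unipotent triangular change of basis. Its $(\log 2N)^{b}$ loss is absorbed by the factor $(\log y)^{-2b}$ in $|f-1|$.
\end{itemize}
Incidentally, the $N$-dependence of $c'$ is never an obstacle for the unperturbed part, since norm equivalence is applied to $c'$ whatever it is.

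\textbf{The gap is in Step~3.} There you bound the unperturbed terms one by one in the original coordinates, getting
\[|h(y)|\lesssim\sum|c_{\alpha\beta\gamma}|(2N)^{\beta}(\log 2N)^{\gamma}(y/2N)^{-A},\]
and then invoke the coefficient control of Step~2. But Step~2 controls $c'$, not this quantity. Converting back costs exactly the loss you identified,
\[|c_{\alpha\beta\gamma}|(2N)^{\beta}(\log 2N)^{\gamma}\lesssim(\log 2N)^{b}\|c'\|_1,\]
and for the main term there is no factor $(\log y)^{-2b}$ to pay for it.

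The loss is genuine. Take $K=\{(0,-1,0),(0,-1,1)\}$, $f\equiv1$ and $h(y)=y^{-1}\log(y/2N)$, i.e.\ $c_{0,-1,0}=-\log 2N$ and $c_{0,-1,1}=1$. Then $\max_j|h(2Nd_j)|\le(2N)^{-1}\log 2$, but $\sum|c|(2N)^{\beta}(\log 2N)^{\gamma}=2(2N)^{-1}\log 2N$. So Step~3 only yields the decay estimate, and hence $\sup_{y>N}|h|\lesssim\max_j|h(2Nd_j)|$, with a constant of order $(\log 2N)^{b}$. The lemma requires constants depending only on $K$.

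Your remark in Step~2 that the error is ``dominated by the size of the coefficients of the expanded basis, whose largest term involves $(\log 2N)^\gamma$'' reflects the same misconception. Cancellation among the $c_{\alpha\beta\gamma}$ with equal $(\alpha,\beta)$ can make $c'$ much smaller than the individual terms.

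\textbf{The repair.} This is what the paper does: never leave the rescaled basis for the main term. For all $t>1/2$,
\[|g_c(2Nt)|=|g_{c'}(t)|\le\|c'\|_1\max_{(\beta,\gamma')}|t^{\beta}(\log t)^{\gamma'}|\le D(K)\,t^{-A}\|c'\|_1,\]
where $-A>\beta$ for all occurring $\beta$. Use the original coefficients only in the perturbation, where
\[|f-1|\,|c_{\alpha\beta\gamma}|\,y^{\beta}(\log y)^{\gamma}\lesssim\delta\,t^{\beta}\|c'_{\alpha\beta}\|_1\]
supplies both smallness and decay. This treats the window $(N,2N)$ and the tail $y>4N$ at once.
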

	
	\begin{proof}
		As $\sup_{y > N} \abs{h(y)}$ is clearly bounded below by $\max_{0 \le j<\abs{K}} \abs{h(2N d_j)}$ for $d_j \geq 1$, it suffices to prove the `moreover' part.
		Suppose the $f_{\alpha\beta \gamma}$ are given and satisfy $\abs{(f_{\alpha\beta \gamma}(y) - 1)(\log y)^{2b(K)}} < \delta(K)$, for some $\delta(K)$ whose value will be computed during the proof.
		
		Consider the finite-dimensional vector space $V = \C^{\abs{K}}$. For any $c \in V$, define functions on $(N,+\infty)$ and $(0,+\infty)$ respectively:
		\begin{enumerate}
			\item $h_c(y) = \sum_{(\alpha,\beta, \gamma) \in K} c_{\alpha\beta\gamma} y^{\alpha i} y^\beta (\log y)^\gamma  f_{\alpha\beta \gamma}(y)$,
			\item $g_c(y) = \sum_{(\alpha,\beta, \gamma) \in K} c_{\alpha\beta\gamma} y^{\alpha i} y^\beta (\log y)^\gamma $.
		\end{enumerate}
		Denote $k = \abs{K}$. Consider the functions $l_{\alpha\beta\gamma}: \R \to \C:y \mapsto y^{\alpha i} y^\beta (\log y)^\gamma$.
		By Lemma \ref{functies zijn independent}, we can find $k$ points $d_0, \cdots, d_{k-1} \in [1,2]$, such that 
		\[\{(l_{\alpha \beta \gamma}(d_0), \cdots,l_{\alpha \beta\gamma}(d_{k-1})) \mid (\alpha,\beta, \gamma) \in K\}\]
		is a linearly independent set. Then the map 
		\[L \colon \C^{k} \to \C^{k}\colon c \to (g_c(d_0), \cdots, g_c(d_{k-1}))\]
		is an injection, thus $\norm{L(c)}_\infty$ is a norm on $\C^{k}$. 
		Then, since any two norms on the finite-dimensional vector space $V$ are equivalent, there exists some $C(K) > 0$ (depending only on $K$) such that
		\begin{equation} \label{eq:equiv_norms}
			\norm{c}_1 \le C(K) \max_{0 \le j < k} \abs{g_c(d_j)}. 
		\end{equation}
		Denote $K' = \{(\beta, \gamma) \mid \text{there is some } \alpha \text{ with } (\alpha, \beta, \gamma) \in K\}.$ 
		We note that since all occurring $\beta$'s are negative, there exist rational numbers $D(K), A(K) > 0$ (where we need only take $-A(K)>\beta$ for all occurring $\beta$), such that on $(1/2,+\infty)$
		\begin{equation} \label{eq:power-bound}
			\max_{(\beta, \gamma) \in K'}|z^\beta (\log z)^\gamma| \le D(K) z^{-A(K)} .
		\end{equation}
		We may furthermore assume without loss of generality that if $(\alpha,\beta, \gamma+1) \in K$, then also $(\alpha,\beta, \gamma) \in K$.
		It follows that there exists some tuple $c'$ such that $g_c(2Nz)  = g_{c'}(z)$.
		Equations (\ref{eq:equiv_norms}) and (\ref{eq:power-bound}) now combine to show that for $z > 1/2$
		\begin{equation}
			\begin{aligned}
				\abs{g_{c}(2Nz)}	&\le \norm{c'}_1 \max_{(\beta,\gamma) \in K'}|z^\beta (\log z)^\gamma|   \\ 
				&\le \left(C(K)  \max_{0 \le j < k} \abs{g_{c'}(d_j)} \right)  \left(  D(K) z^{-A(K)}\right)  \\
				&=C(K)D(K) z^{-A(K)} \max_{0 \le j<k} \abs{g_c(2N d_j)}.   \label{eq:ineq_g}
			\end{aligned}
		\end{equation}
		
		To continue, we need to consider more closely the relationship between $c$ and $c'$.
		We compute
		
		\begin{equation}
			(2Nz)^{\alpha i}(2Nz)^{\beta} (\log (2Nz))^\gamma=(2N)^{\alpha i + \beta} z^{\alpha i} z^{\beta} \sum_{i=0}^\gamma \binom{\gamma}{i}(\log 2N)^i (\log z)^{\gamma-i}.
		\end{equation}
		This implies that for any fixed $\alpha, \beta$, if we denote $n=n(\alpha, \beta)=\max\{\gamma \mid (\alpha, \beta, \gamma) \in K\}$:
		\[
		\renewcommand{\arraystretch}{1.4}
		\begin{bmatrix}
			c'_{\alpha \beta 0} \\
			c'_{\alpha\beta 1} \\
			c'_{\alpha\beta 2} \\
			\vdots \\
			c'_{\alpha\beta n}
		\end{bmatrix}
		=(2N)^{-\alpha i-\beta}
		\begin{bmatrix}
			1 & \binom{1}{1}\log 2N & \binom{2}{2}(\log 2N)^2 & \cdots & \binom{n}{n}(\log 2N)^n \\
			0 & 1 & \binom{2}{1}\log 2N & \cdots & \binom{n}{n-1}(\log 2N)^{n-1} \\
			\vdots & \vdots & \vdots & \ddots & \vdots \\
			0 & 0 & 0 & \cdots & 1
		\end{bmatrix}
		\begin{bmatrix}
			c_{\alpha \beta 0} \\
			c_{\alpha\beta 1} \\
			c_{\alpha\beta 2} \\
			\vdots \\
			c_{\alpha\beta n}
		\end{bmatrix}
		\renewcommand{\arraystretch}{1}
		\]
		
		Denote by $c_{\alpha\beta}$, $c'_{\alpha\beta}$ the subtuples of $c$, $c'$ consisting of those $c_{\eta\delta \zeta}$ and $c'_{\eta \delta\zeta}$ with $(\eta, \delta)=(\alpha, \beta)$.
		By inverting the matrix in the above equality we can deduce that there is a constant $E(K)$ such that
		
		$$\norm{c_{\alpha \beta}}_1 \le E(K) (2N)^{-\beta} (\log 2N)^{n(\alpha, \beta)} \norm{c'_{\alpha\beta}}_1.$$
		
		Now denote
		
		$$h_c^{\alpha \beta}(z)=\sum_{\{\gamma \mid (\alpha, \beta, \gamma) \in K\}} c_{\alpha \beta\gamma} f_{\alpha\beta\gamma}(z) z^{\alpha i} z^{\beta} (\log z)^\gamma$$
		
		and
		
		$$g_c^{\alpha\beta}(z)=\sum_{\{\gamma \mid (\alpha, \beta, \gamma) \in K\}} c_{\alpha \beta \gamma} z^{\alpha i} z^{\beta} (\log z)^\gamma.$$
		
		We compute
		\begin{equation}
			\begin{aligned}
				\abs{h_c^{\alpha \beta}(2Nz) - g_c^{\alpha \beta}(2Nz) } 	&\le \max_{(\alpha, \beta, \gamma) \in K} \abs{f_{\alpha \beta \gamma}(2Nz)-1} \norm{c_{\alpha\beta}}_1 \abs{(2Nz)^{\alpha i}} (2Nz)^\beta (\log 2Nz)^{n(\alpha, \beta)}  \\			
				&\le \delta(K) E(K)  z^{\beta} \norm{c'_{\alpha\beta}}_1.
			\end{aligned}
		\end{equation}
		
		We deduce that
		
		\begin{equation}
			\begin{aligned}
				\abs{h_c(2Nz)-g_c(2Nz)} & \le \sum_{\alpha \beta} \abs{h_c^{\alpha \beta}(2Nz)-g_c^{\alpha \beta}(2Nz)}\\
				& \le \delta(K) E(K) z^{-A(K)} \norm{c'}_1\\
				& \le \delta(K) C(K) E(K) z^{-A(K)} \max_{0 \le j <k} \abs{g_c(2N d_j)}.
				\label{eq:ineq_diff}
			\end{aligned}
		\end{equation}
		Hence, taking $\delta(K) =\frac{1}{2 C(K) D(K)}$ (which thus only depends on $K$) we may deduce from the above inequality that
		\[ \abs{g_c(2Nz)} - \frac{1}{2} z^{-A(K)} \max_{0 \leq j < k} \abs{g_c(2Nd_j)} \le \abs{h_c(2Nz)}. \]
		Evaluating at $z = d_j$ for $0\le j < k$ yields
		\begin{equation} \label{eq:ineq_j}
			\frac{1}{2} \max_{0\le j < k} \abs{g_c(2Nd_j)} \le  \max_{0\le j < k} \abs{h_c(2Nd_j)}.
		\end{equation}
		Finally, combining (\ref{eq:ineq_g}), (\ref{eq:ineq_diff}) and (\ref{eq:ineq_j}) shows that for  $z > 1/2$
		\begin{align*}
			\abs{h_c(2Nz)} 	&\le \abs{g_c(2Nz)} + z^{-A(K)} \frac{1}{2} \max_{0\le j < k} \abs{g_c(2Nd_j)} \\
			&\le (C(K) D(K) + \frac{1}{2}) z^{-A(K)} \max_{0 \le j < k} \abs{g_c(2Nd_j)} \\
			&\le 2 (C(K) D(K) + \frac{1}{2}) z^{-A(K)} \max_{0 \le j < k} \abs{h_c(2Nd_j)}.
		\end{align*} 
		A substitution $y=2Nz$ completes the proof.
	\end{proof}
\begin{remark}
If the study of the approximate suprema problem is restricted to constructible functions, Lemmas~\ref{le:uniformUnboundedZero} and~\ref{lemma: no a(x)} can be omitted, since it then suffices to consider Lemma~\ref{le:uniformUnboundedZeroconstr}, which admits a much simpler proof.
\end{remark}
	\begin{lemma}\label{le:uniformUnboundedZero}
		Consider a function
		\[f(y)=\sum_{(\alpha,\gamma)\in K} c_{\alpha \gamma} y^{\alpha i}(\log y)^{\gamma}, \]
		with $1<a<y<b$, $K$ a finite subset of $[0,2 \pi) \times \N $, and $c_{\alpha \gamma} \in \C$.
		Define $\gamma_{\max} = \max\{\gamma\mid (\alpha,\gamma) \in K \text{ for some }\alpha \in \R\}$.
		Let
		\[  p: \C^{\abs{K}} \to \C^{\abs{K}}: c_{\alpha \gamma} \mapsto \sum_{m=\gamma}^{\gamma_{\max}} c_{\alpha m} \binom{m}{\gamma} (\log b/a)^{\gamma} (\log a)^{m-\gamma}. \]
	 	Then there is some $M_0 \in \R$, such that if $\log\frac{b}{a} > M_0$, the following holds.
		There exist constants $L(K), N(K) > 0$, depending only on $K$, such that
		\[N(K) \norm{p(c)}_\infty \le \sup_{y \in [a,b]} \abs{f(y)} \le L(K)\norm{p(c)}_\infty.\]
		Moreover, there is some $y_0  \in [a^{\frac{3}{4}}b^{\frac{1}{4}},a^{\frac{1}{4}}b^{\frac{3}{4}}]$, which may depend on $c$, and a constant $P(K)$ such that $$\abs{f(y_0)} \geq P(K) \norm{p(c)}_\infty.$$
	\end{lemma}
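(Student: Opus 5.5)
The plan is to reduce to a fixed interval by a logarithmic change of variables, after which the map $p$ appears exactly as the coefficient transformation. Put $L=\log\frac ba$ and parametrize $[a,b]$ by $y=a\,(b/a)^t=a e^{Lt}$ with $t\in[0,1]$. Then $\log y=\log a+tL$ and $y^{\alpha i}=a^{\alpha i}e^{i\alpha L t}$. Expanding $(\log a+tL)^m$ by the binomial theorem and collecting powers of $t$ should give
\[
f(ae^{Lt})=\sum_{(\alpha,\gamma)\in K} a^{\alpha i}\,p(c)_{\alpha\gamma}\,t^\gamma e^{i\alpha L t}.
\]
(If some $(\alpha,\gamma')$ with $\gamma'<\gamma$ is missing from $K$, I first enlarge $K$ by adding it with coefficient $0$; this changes nothing.) Write $d_{\alpha\gamma}=a^{\alpha i}p(c)_{\alpha\gamma}$. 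Since $a>1$ we have $|a^{\alpha i}|=1$, so $\norm{d}_\infty=\norm{p(c)}_\infty$. Hence it suffices to compare $\sup_{t\in[0,1]}|F_d(t)|$ and $\sup_{t\in[1/4,3/4]}|F_d(t)|$ with $\norm{d}_\infty$, uniformly in $L>M_0$, where
\[
F_d(t)=\sum d_{\alpha\gamma}t^\gamma e^{i\alpha Lt}.
\]
Note that $t\in[1/4,3/4]$ corresponds exactly to $y\in[a^{3/4}b^{1/4},a^{1/4}b^{3/4}]$.

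The upper bound is immediate. Since $|t^\gamma e^{i\alpha Lt}|\le 1$ on $[0,1]$, we get $\sup|f|\le |K|\,\norm{d}_\infty$, so we may take $L(K)=|K|$.

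For the lower bound, I would use an $L^2$ argument on $J=[1/4,3/4]$. Consider the Gram matrix $G(L)$ of the functions $e_{\alpha\gamma}(t)=t^\gamma e^{i\alpha L t}$ in $L^2(J)$. There are two kinds of entries:
\begin{itemize}
\item For equal frequencies $\alpha=\alpha'$, the entry is $\int_J t^{\gamma+\gamma'}\,dt$, which does not depend on $L$.
\item For $\alpha\neq\alpha'$, the entry is $\int_J t^{\gamma+\gamma'}e^{i(\alpha-\alpha')Lt}\,dt$. Because $\alpha,\alpha'\in[0,2\pi)$ are distinct and $K$ is finite, $|\alpha-\alpha'|\ge \eta(K)>0$. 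One integration by parts (a quantitative Riemann--Lebesgue lemma) then bounds the entry by $C_K/(\eta L)$.
\end{itemize}
So $G(L)\to G_0$ as $L\to\infty$, where $G_0$ is block diagonal with blocks given by the Gram matrices of the monomials $\{t^\gamma\}$ on $J$. These blocks are positive definite, because monomials are linearly independent on an interval (cf.\ Lemma~\ref{functies zijn independent}). Let $\lambda(K)>0$ be the smallest eigenvalue of $G_0$, and choose $M_0$ so that $\norm{G(L)-G_0}\le \lambda/2$ whenever $L>M_0$. For such $L$,
\[
\int_J|F_d|^2=d^*G(L)d\ge \tfrac{\lambda}{2}\norm{d}_2^2\ge\tfrac{\lambda}{2}\norm{d}_\infty^2 .
\]
Since $|J|=1/2$, there is some $t_0\in J$ with $|F_d(t_0)|^2\ge \lambda\norm{d}_\infty^2$. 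This gives the required $y_0=ae^{Lt_0}$ with $P(K)=\sqrt{\lambda}$, and also $N(K)=P(K)$, since $y_0\in[a,b]$.

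The main obstacle is the uniformity in $L$, and with it the independence from $a$ and from the coefficients $c$. The rescaling handles $a$, because $a$ enters only through unimodular factors and through $p$. Uniformity in $c$ holds because the bound $d^*G(L)d\ge\frac\lambda2\norm{d}_2^2$ is a statement about a quadratic form, so it automatically holds for every $d$. The one genuinely delicate point is to make the Riemann--Lebesgue decay explicit and to note that it depends only on $K$, via $\eta(K)$ and $\max\gamma$. It is this explicit decay that determines $M_0$.
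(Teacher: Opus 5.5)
Your proof is correct and follows essentially the paper's argument: the substitution $y=a(b/a)^t$ turns the coefficients into $p(c)$, then an $L^2$ estimate on $[1/4,3/4]$ handles the rest, with cross-frequency terms vanishing by Riemann--Lebesgue, positive-definite equal-frequency blocks, and a mean-value step. The differences are minor: you make the Riemann--Lebesgue decay explicit and use a Gram-matrix eigenvalue perturbation where the paper uses continuity and compactness of the unit sphere, and you track the unimodular factors $a^{\alpha i}$ that the paper absorbs silently.
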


	\begin{proof}
		We can assume without loss of generality that $K$ is of the form $\{\alpha_1, \cdots, \alpha_m\} \times \{0,1, \cdots, n\}.$ 
		Define the function
		\[g_{cM}: [0,1] \to \C: y \mapsto \sum_{(\alpha,\gamma) \in K}  c_{\alpha \gamma} \exp({\alpha Myi})y^{\gamma}. \] 
		We will prove that there exist constants $M_0, L(K), N(K)$, only depending on $K$, such that for $M > M_0$:
		\begin{equation} \label{eq: intermediate result 1}
			N(K) \norm{c}_2 \le \sup_{[0,1]} \abs{g_{cM}(y)} \leq L(K) \norm{c}_2.\end{equation}
		In order to prove this, we will first show the following. 
		There are $M_0$, $\nu(K), \mu(K)$, such that for each $M > M_0$:
		\begin{equation} \label{eq: intermediate result 1a}
			\nu(K) \norm{c}_2^2 \le \int_{[0,1]} \abs{g_{cM}(y)}^2 dy \le \mu(K) \norm{c}_2^2.
		\end{equation}
		If $\norm{c}_2 = 0$, this is trivial. If not, by dividing each part of Equation \eqref{eq: intermediate result 1a} by $\norm{c}_2^2$, it suffices to prove this for $\norm{c}_2=1.$
		Notice that:
		\begin{equation}\label{eq: equality of 2 integrals}
			\int_{[0,1]} \abs{g_{cM}(y)}^2dy = \int_{[0,1]} \sum_{j=1}^m \left|\sum_{\gamma=0}^n c_{\alpha_j \gamma} y^{\gamma}\right|^2dy + T(c,M),
		\end{equation} 
		with
		\begin{align*}T: S^{|K|} \times \R \to \R: &(c,M) \mapsto \\ &\int_{[0,1]} \sum_{(\alpha, \gamma) \in K}\sum_{(\alpha',\gamma') \in K, \alpha \neq \alpha'}{c_{\alpha \gamma}} \overline{c_{\alpha'\gamma'}}y^{\gamma+\gamma'}\exp(iMy(\alpha -\alpha'))dy.\end{align*}
		By the Riemann-Lebesgue lemma, $$\lim_{M \to \infty} T(c,M) = 0.$$ 
		As clearly $T$ is continuous, by compactness of $S^k$, for every $\varepsilon > 0$ there is some $M_0(\varepsilon)$, such that for each $M \ge M_0(\varepsilon)$ and $c \in S^k$:
		\begin{equation} \label{eq: T(c,M)}
			\abs{T(c,M)} < \varepsilon.\end{equation} 
		Define  $$U: S^{|K|} \to \R: c \to \int_{[0,1]} \sum_{j=1}^m \abs{\sum_{\gamma=0}^n c_{\alpha_j \gamma} y^{\gamma}}^2 dy = \sum_{j=1}^m\sum_{\gamma, \gamma'=0}^n c_{\alpha_j \gamma} \overline{c_{\alpha_{j} \gamma'}} \frac{1}{\gamma+\gamma'+1}.$$ 
		Since $U$ is a continuous strictly positive function on a compact set, it has a lower bound $\delta$.
		Combining Equation \eqref{eq: equality of 2 integrals} and \eqref{eq: T(c,M)}, it is clear that for each $M > M_0(\delta/2)$:
		\begin{equation}\label{eq: inequality with 1/2} \frac{1}{2} U(c) \le \int_{[0,1]} \abs{g_{cM}(y)}^2dy \le \frac{3}{2} U(c).\end{equation}
		Since $\sqrt{U(c)}$ is a norm on $\C^{|K|}$ and each norm on a finite-dimensional vector space is equivalent, we can deduce from Equation \eqref{eq: inequality with 1/2} that there exists constants $\nu(K), \mu(K)$ such that Equation \eqref{eq: intermediate result 1a} holds. 
		This finishes the proof of Equation \eqref{eq: intermediate result 1}, since 
		$$
		\int_{[0,1]} \abs{g_{cM}(y)}^2 dy \le \sup_{[0,1]} \abs{g_{cM}(y)}^2
		$$
		and
		$$\sup_{[0,1]} \abs{g_{cM}(y)}^2 \le \abs{K} \norm{c}_2^2.$$
		Next, we will prove that there is a constant $M_1$ and some function $h: S^k \times [M_1,\infty) \to [\frac{1}{4}, \frac{3}{4}]$ and a constant $P(K)$ such that \begin{equation} \label{eq: }
			g_{cM}(h(c,M)) \ge \norm{c}_2 P(K). \end{equation}
		Define  
		\begin{align*}W: S^k \to \R: c \mapsto& \int_{[1/4,3/4]} \sum_{j=1}^m \abs{\sum_{\gamma=0}^n c_{\alpha_j \gamma} y^{\gamma}}^2 dy \\ &= \sum_{j=1}^m\sum_{\gamma,\gamma'=0}^n c_{\alpha_j \gamma} \overline{c_{\alpha_j  \gamma'}} \frac{(3/4)^{\gamma+\gamma'+1}-(1/4)^{\gamma+\gamma'+1}}{\gamma+\gamma'+1}.\end{align*}
		 $\sqrt{W(c)}$ is again a norm on $\C^{|K|}$. 
		 So there is some constant $\alpha(K)$ with 
		\begin{equation} \label{eq: W(c)}
			W(c) \ge U(c) \alpha(K).\end{equation} On the other hand, as done previously, we can find some $M_1$, $\beta(K)$, such that for all $M > M_1$, 
			\begin{equation}
				\int_{[1/4,3/4]} \abs{g_{cM}(y)}^2dy \ge \beta(K) W(c).
			\end{equation}
		Thus combining the previous equations, 
		\[ \int_{[1/4,3/4]} \abs{g_{cM}(y)}^2dy \ge P(K) \norm{c}_2^2 \]
		
		for some $P(K)>0$.
	 	Then, using the intermediate value theorem for integrals, there is some $h(c,M) \in [1/4,3/4]$, such $g_{cM}(h(c,M)) \ge 2 \sqrt{P(K)} \norm{c}_2.$
	 	
		Apply Equation \ref{eq: intermediate result 1} to $f(a (\frac{b}{a})^y)$ and define $y_0 = a \left(\frac{b}{a}\right)^{h(p(c),(\log b/a))}$ corresponding to that function. Finally, one can replace $\norm{\cdot}_2$ by $\norm{\cdot}_\infty$, after adapting the constants, as these norms are equivalent. This finishes the proof.
	\end{proof}

	\begin{lemma} \label{lemma: no a(x)}
		Let $$f(y) =\sum_{\alpha \in K} c_{\alpha} y^{\alpha i},$$ for some finite set $K \subset [0, 2 \pi)$ and $c_\alpha \in \C$. 
		Fix a positive integer $N$.
		Then there are points $d_0, \cdots, d_{\abs{K}-1} \in [N^2, \infty)$ and $C(K, N)> 0$, both only depending on $K$ and $N$, such that $$f(y) \leq C(K, N) \max_{0 \leq j < \abs{K}}\abs{f(d_j)}.$$
	\end{lemma}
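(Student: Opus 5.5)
The plan is to use that each $y^{\alpha i}$ with $\alpha$ real has modulus one, so that $\abs{f}$ is bounded by $\norm{c}_1$ uniformly in $y>0$, and then to recover $\norm{c}_1$ from finitely many values $f(d_j)$ by a linear-algebra argument. The sampling points come from Lemma~\ref{functies zijn independent}, and the constant comes from equivalence of norms on $\C^{\abs{K}}$. No Riemann--Lebesgue or oscillation argument should be needed.

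First, apply Lemma~\ref{functies zijn independent} to the finite set $A=\{y^{\alpha i}\mid \alpha\in K\}\subset S$ on the open interval $I=(N^2,N^2+1)$. Since the $\alpha\in K$ are distinct, these $\abs{K}$ functions are pairwise distinct. The lemma then gives points $d_0,\ldots,d_{\abs{K}-1}\in I\subset[N^2,\infty)$ such that the vectors $(d_0^{\alpha i},\ldots,d_{\abs{K}-1}^{\alpha i})$, for $\alpha\in K$, are linearly independent. These points depend only on $K$ and $N$.

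Next, consider the linear map
\[L\colon \C^{\abs{K}}\to\C^{\abs{K}}\colon c\mapsto \bigl(f_c(d_0),\ldots,f_c(d_{\abs{K}-1})\bigr),\qquad f_c(y)=\sum_{\alpha\in K}c_\alpha y^{\alpha i}.\]
The matrix of $L$ has the independent vectors above as its columns, so $L$ is injective. Hence $c\mapsto\norm{L(c)}_\infty$ is a norm on $\C^{\abs{K}}$. By equivalence of norms on a finite-dimensional space, there is $C(K,N)>0$, depending only on the points $d_j$ and hence only on $K$ and $N$, with
\[\norm{c}_1\le C(K,N)\max_{0\le j<\abs{K}}\abs{f_c(d_j)}\quad\text{for all } c.\]

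Finally, for every $y>0$ and $\alpha\in\R$ we have $\abs{y^{\alpha i}}=\abs{e^{i\alpha\log y}}=1$, so
\[\abs{f(y)}\le\sum_{\alpha\in K}\abs{c_\alpha}=\norm{c}_1\le C(K,N)\max_j\abs{f(d_j)},\]
which is the claim. In particular it holds on all of $(0,\infty)$, not only for $y\ge N^2$.

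The only point that needs care is making sure the constant does not depend on $c$. This is handled by fixing the $d_j$ before $c$ is chosen and using compactness of the unit sphere implicitly through norm equivalence. Otherwise I do not expect a real obstacle.
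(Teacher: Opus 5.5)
Your proof is correct and is essentially the paper's argument: you take sampling points from Lemma~\ref{functies zijn independent}, use injectivity of the evaluation map to make $\max_j \abs{f_c(d_j)}$ a norm, compare it with $\norm{c}_1$ by equivalence of norms, and finish with the trivial bound $\abs{f_c(y)}\le\norm{c}_1$ coming from $\abs{y^{\alpha i}}=1$. You also correctly state the conclusion for $\abs{f(y)}$, which is the intended reading of the lemma.
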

	\begin{proof}
		Denote $$f_c(y) =\sum_{\alpha \in K} c_{\alpha} y^{\alpha i}.$$
		By Lemma \ref{functies zijn independent}, there are $\abs{K}$ points $d_0, \cdots, d_{\abs{K}-1} > N^2$, such that for each $c \in \mathbb{C}^k$, $f_c(d_j) \neq 0$ for some $j \in \{0, \cdots, \abs{K}-1\}$. Thus $\max_{0 \leq j < \abs{K}}\abs{f_c(d_j)}$ is a norm on $\C^{\abs{K}}$.
		As $||c||_1$ is also a norm on $\C^{\abs{K}}$, there exists a constant $C(K)$ such that, for all $c$, $y$: 
		
		\[|f_c(y)| \leq \norm{c}_1 \leq C(K, N) \max_{0 \leq j < \abs{K}}\abs{f_c(d_j)}. \qedhere\]
	\end{proof}
	\begin{lemma}\label{le:uniformUnboundedZeroconstr}
	Consider a function
	\[f(y)=\sum_{\gamma=0}^d c_\gamma (\log y)^\gamma,\]
	where $1<a<y<b$.
	There exist constants $L(d)$ and $M(d)$, depending only on the degree $d$, such that
	\[\sup_{y \in (a,b)} \abs{f(y)} \le L(d) \max_{0<  j <M(d)} \abs{f(a^{j/M(d)}b^{1-j/M(d)})}.\]
\end{lemma}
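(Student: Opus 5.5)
The plan is to substitute $t = \log y$ and reduce to a statement about polynomials on an interval. Set $u = \log a$, $v = \log b$, so that $0<u<v$, and write $f(y) = P(\log y)$ with $P(t) = \sum_{\gamma=0}^d c_\gamma t^\gamma$. The sample points become
\[
\log\bigl(a^{j/M}b^{1-j/M}\bigr) = \tfrac{j}{M}u + \bigl(1-\tfrac{j}{M}\bigr)v ,
\]
which are equally spaced points of $[u,v]$. The claim then reads
\[
\sup_{t\in(u,v)}|P(t)| \le L(d)\max_{0<j<M(d)}|P(t_j)|
\]
for these interior nodes $t_j$. Since affine reparametrisation preserves both the degree and the relative position of the nodes, it suffices to prove this for $[u,v]=[0,1]$ with nodes $s_j = j/M$, $0<j<M$. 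I would take $M(d) = d+2$, which gives $d+1$ distinct interior nodes $1/(d+2),\dots,(d+1)/(d+2)$.

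The key step is a norm-equivalence argument on the finite-dimensional space $\mathcal{P}_d$ of complex polynomials of degree at most $d$, in the spirit of the proofs of Lemmas~\ref{le:uniformUnboundedNeg} and~\ref{lemma: no a(x)}.

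1. The map $Q \mapsto \max_{0<j<M}|Q(s_j)|$ is a norm on $\mathcal{P}_d$. Any $Q$ with $Q(s_j)=0$ at $d+1$ distinct nodes is the zero polynomial, and Lagrange interpolation makes the injectivity explicit.
2. The map $Q\mapsto \sup_{[0,1]}|Q|$ is also a norm on $\mathcal{P}_d$.
3. Equivalence of norms in finite dimension gives a constant $L(d)$ with $\sup_{[0,1]}|Q| \le L(d)\max_j |Q(s_j)|$. Concretely one can take $L(d) = \sum_j \sup_{[0,1]}|\ell_j|$, where the $\ell_j$ are the Lagrange basis polynomials for the nodes $s_j$.
4. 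Apply this to $Q(s) = P\bigl((1-s)u+sv\bigr)$, which again has degree at most $d$, and translate back via $y = e^t$.

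The constants depend only on $d$: the nodes $s_j$ are fixed independently of $a$, $b$ and the coefficients $c_\gamma$, and the composition with an affine map of $[0,1]$ onto $[u,v]$ stays inside $\mathcal{P}_d$. This independence from the coefficients is what the later application needs.

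There is no real obstacle here. The only point needing care is that the sampling points must lie strictly inside $(a,b)$ and depend only on $a$ and $b$ in the stated multiplicative way. This is why interior nodes are used, with $0<j<M$ and $M=d+2$ rather than $d+1$, since we need $d+1$ distinct interior points. Unlike Lemma~\ref{le:uniformUnboundedZero}, there are no oscillatory factors $y^{\alpha i}$, so the Riemann–Lebesgue lemma is not needed. In particular, no lower bound on $\log(b/a)$ is required.
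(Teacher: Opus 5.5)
Your proposal is correct and matches the paper's proof. The paper also takes $M(d)=d+2$, uses that evaluation at the $d+1$ interior nodes $1-j/M(d)$ is a norm on polynomials of degree at most $d$, compares it with the sup norm on $(0,1)$ by equivalence of norms, and applies this to $P(x)=f(a(b/a)^x)$, which is exactly your $Q$; your explicit Lagrange constant is a harmless refinement.
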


\begin{proof}
	Let $M(d)=d+2$ and let $V$ be the vector space of polynomials of degree $d$.
	Then
	\[\varphi \colon V \to \R^{M(d)} \colon P \mapsto (P(1-j/M(d)))_{0< j<M(d)}\]
	is an isomorphism of vector spaces.
	In particular, $\norm{\varphi(P)}_\infty$ is a norm on $V$.
	Also $P \mapsto \sup_{x \in (0, 1)} \abs{P(x)}$ is a norm on $V$.
	As $V$ is finite-dimensional, these norms are equivalent, so we find that for all degree $d$ polynomials $P$
	\[\sup_{x \in (0,1)} \abs{P(y)} \le C \max_{0<j<M(d)} \abs{P(1-j/M(d))}\]
	for some constant $C$ depending only on $d$.
	Now, from $f$, define $P(x)=f(a(b/a)^x)$.
	The above bound then yields the lemma.
\end{proof}
	\begin{proof}\textit{of Theorem \ref{th:Unbalanced}}
		First assume that $N < y < a(x).$
		We use Theorem \ref{th:PrepUnbalanced power constr} to write
		
		\[f(x,y)=f^-(x,y)+f^0(x,y)+f^+(x,y)\]
		
		on $C_{N, \varepsilon}$, where
		\begin{align*}
			f^-(x,y)	&=\sum_{\beta<0} \sum_{\alpha \gamma} A_{\alpha \beta \gamma}(x) f_{\alpha \beta \gamma} (x, y^{-1}) y^{\alpha i + \beta} (\log y)^\gamma, \\
			f^0(x,y)	&=\sum_{\alpha \gamma} A_{\alpha0\gamma}(x)y^{\alpha i} (\log y)^\gamma, \\
			f^+(x,y)	&= \sum_{\beta > 0}\sum_{\alpha \gamma} A_{\alpha \beta \gamma}(x) f_{\alpha \beta \gamma}(x, y/a(x)) (y/a(x))^{\alpha i+\beta} (\log(y/a(x)) )^\gamma.
		\end{align*}
		
		Now let $K^-$ be the set of $(\alpha, \beta, \gamma)$-pairs appearing in $f^-$. We let $\delta(K^+), \delta(K^{-})$ and  $M_0$ be as in Lemmas~\ref{le:uniformUnboundedNeg} and \ref{le:uniformUnboundedZero}. Define $b(K^-) = \max\{\gamma \mid (\alpha, \beta, \gamma) \in K^{-}\}.$
		Since $f_{\alpha \beta \gamma}(x,y^{-1}) = 1 + O(y^{-1})$ by the proof of Theorem~\ref{th:PrepUnbalanced power constr}, it follows that $\abs{f_{\alpha \beta \gamma}(x,y^{-1})-1} (\log y)^{2b(K^-)}<\delta(K^-)$ by taking $N$ larger and $\varepsilon$ smaller if necessary.
		We may similarly define $K^+$ as the set of pairs $(\alpha,-\beta,\gamma)$, where $(\alpha, \beta, \gamma)$ appears in $f^+$.
			Then we may also assume that $\abs{f_{\alpha \beta \gamma}(x,y/a(x))-1}(\log (a(x)/y))^{2b(K^+)}<\delta(K^+)$ everywhere. 
		Let $K^0$ be the set of $(j,l)$-pairs appearing in $f^0$. 
		By taking $\varepsilon$ small enough, we may finally assume that $\log{\frac{a(x)}{N^2}} > M_0(K^0)$.
		
		Now from Lemmas \ref{le:uniformUnboundedNeg} and \ref{le:uniformUnboundedZero}, we can deduce the existence of $d_j^-,d_j^+ \in [1,2]$ such that the following estimates hold, for $\varepsilon$ sufficiently small
		
		\begin{align}
			\abs{f^-(x,y)} &\le C(K^-) (2N)^{A(K^-)} y^{-A(K^-)} \max_{0 \le j<\abs{K^-}} \abs{f^-(x, 2Nd_j^-)}, \\
			\abs{f^0(x,y)} &\le L(K^0) \norm{p((A_{\alpha 0 \gamma}(x))_{\alpha \gamma})}_\infty, \\
			\abs{f^+(x,y)} &\le C(K^+) (2N)^{A(K^+)} (a(x)/y)^{-A(K^+)} \max_{0 \le j<\abs{K^+}} \abs{f^+(x, a(x)/ (2N d_j^+))}. 
		\end{align}
		
		Let $C$ be the maximum of $C(K^-)$, $C(K^+)$ and $L(K^0)$, and write
		\begin{enumerate}
			\item $a^-_j(x) =  2Nd_j^-$, 
			\item $a^+_j(x) = a(x) (2Nd_j^+)^{-1}$.
		\end{enumerate}
		Additionally, write $A = \min\{A(K^-),A(K^+)\}$. 
		By taking $\varepsilon$ sufficiently small, we may assume that for all $x \in B_\varepsilon$
		\begin{equation*}
			C 2^A a(x)^{-A/4}(2N)^{2A}<\frac{P(K^0)}{3C} \le \frac13.
		\end{equation*} 
		For fixed $x$, let $L$ be the maximal element of the set 
		\begin{align*}
			S=\{& \abs{f^-(x, a_0^-(x))}, \ldots, \abs{f^-(x, a_{\abs{K^-}-1}^-(x))}, \\
			& 3C\norm{p((A_{\alpha 0\gamma}(x))_{\alpha \gamma})}_\infty, \\ 
			& \abs{f^+(x, a_0^+(x))}, \ldots, \abs{f^+(x, a^+_{\abs{K^+}-1}(x))}\}.
		\end{align*}
		
		We will show that $S_f(x)$ is bounded below by $KL$ for some constant $K$, for all $x \in B_\varepsilon$.
		As also $\abs{f(x,y)} \leq DL$ for some large enough constant $D$, it then follows that
		\[S_f(x) \sim \max_j\{ \abs{f(x, a^{-, +}_j(x))}, 3C\norm{p((A_{\alpha 0 \gamma}(x))_{\alpha \gamma})}_\infty\}.\]

		There are three possibilities:
		\begin{enumerate}
			\item $L$ is of the form $\abs{f^-(x, a^-_j(x))}$.
			In this case we have that
			\[\abs{f^+(x, a_j^-(x))} \le C (2N)^A a(x)^{-A} (2Nd_j^-)^A \cdot L<\frac13 L,\]
			since $\varepsilon$ was chosen sufficiently small.
			Moreover,
			\[\abs{f^0(x,y)} \le C  \norm{p((A_{\alpha 0 \gamma}(x))_{\alpha \gamma})}_\infty \le \frac13 L ,\]
			as the $f^0$-terms appear in $S$ with a coefficient of $3C$.
			
			Then we find that
			\[ \abs{f(x, a^-_j(x))} \ge \frac13 L,\]
			as desired.
			
			\item $L$ is of the form $\abs{f^+(x, a^+_j(x))}$.
			This case finishes in exactly the same way as the previous one.
			
			\item $L$ is of the form $3C\norm{p((A_{\alpha 0\gamma}(x))_{\alpha \gamma})}_\infty$. Taking $y_0$ from Lemma \ref{le:uniformUnboundedZero}, we see that
			$$\abs{f^0(y_0)} \geq P\norm{p((A_{\alpha 0 \gamma})_{\alpha \gamma})}_\infty = \frac{P}{3C} L,$$
			
			\[\abs{f^-(x, y_0)} \le C (2N)^{A} y_0^{-A}L\]
			
			and
			
			\[\abs{f^+(x, y_0)} \le C \Bigl(\frac{a(x)}{2Ny_0}\Bigr)^{-A} L,\]
			
			where again we see that by taking $\varepsilon$ sufficiently small, we can finish in the same way, (since $y_0 \in [N^{\frac{3}{4}}a(x)^{\frac{1}{4}}, N^{\frac{1}{4}}a(x)^\frac{3}{4}]$), with a lower bound of $\frac{P}{9C} L$.
			This concludes the proof in the case $N<y<a(x)$. 
		\end{enumerate}
		Now assume that $N <y$ (so there is no upper bound $a(x)$). Then $f^+ = 0$, and $f^0(x,y) = \sum_\alpha A_{\alpha0}(x) y^{\alpha i}$. 
		So the proof is entirely similar, except that we use Lemma \ref{lemma: no a(x)} to find points $d_1, \cdots, d_{\abs{K_0}-1} > N^2$ with 
		$$f^0(x,y) \leq C(K^0, N) \max_{0 \leq j < \abs{K^{0}}} \abs{f^{0}(x,d_j)}.$$ 
		Finally, if $f \in \mathcal{C}^{\K}$ and $\K \subset \R$, then by Lemma \ref{le:uniformUnboundedZeroconstr}, one can replace $\norm{p((A_{\alpha0\gamma}(x))_{\alpha \gamma})}_\infty$ by $f^0(x,a_0^0(x))$, implying the second statement of Theorem \ref{th:Unbalanced}.\qedhere
	\end{proof}

	\section{Balanced cells}\label{sec:balanced}

	We now turn our attention to balanced cells. The proof in this case is somewhat easier than in the unbalanced case: since $\log(y)$ is subanalytic on balanced cells, the $y$ variable only appears inside subanalytic functions. 
	\begin{theorem}\label{th:Balanced}
		Let $C \subset \R^{n+1}$ be an $l$-normalized balanced cell, and let $f: C \to \C$ be a $\K$-power-constructible $l$-prepared function on $C$, with $\K \subset \C$ a subfield.
		
		Then there exist subanalytic functions $a_1(x), ..., a_M(x)$, for which $(x, a_i(x)) \in C$ always, such that
		
		\[S_f(x) \sim \max_i \abs{f(x, a_i(x))}\]
		
		on the base $B$ of $C$.
	\end{theorem}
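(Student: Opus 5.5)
We plan to reduce to a statement about finitely many analytic functions on a compact set, and then to use compactness, Lemma~\ref{veralgemening 4.12 Denef} and an equivalence-of-norms argument, as in Lemma~\ref{le:uniformUnboundedNeg}. Since $C$ is balanced, its upper boundary is $a(x)=R(x)$ with $R$ an analytic subanalytic unit on $\overline B$. We substitute $y=1+(a(x)-1)t$ with $t\in(0,1)$. On $C$ the variable $y$ lies in $[1,\sup R]$, so $y^{b}$, $y^{\beta}$, $\log y$ and both components $x_{>l}^{q_1}y^{-1}$, $x_{>l}^{q_2}y$ of $\psi$ become analytic functions of $(w,t)$ on a neighbourhood of a compact set $Z\times[0,1]$. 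Here $w=(x,x_{>l}^{q_1},x_{>l}^{q_2},a(x))$ ranges over a bounded set with compact closure $Z$.

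Collecting the $l$-prepared form of $f$ accordingly, we get
\[f(x,y)=\sum_{j=1}^{m} c_j(x)\,W_j(w(x),t),\]
where $c_j(x)=x_{>l}^{r}(\log x_{>l})^{s}$ are the $x$-dependent power-constructible coefficients and the $W_j$ are analytic near $Z\times[0,1]$. It therefore suffices to find finitely many constants $t_1,\dots,t_M\in(0,1)$ and a constant $D>0$ such that for all $c\in\C^m$ and all $w\in Z$
\[\sup_{t\in(0,1)}\Bigl|\sum_j c_jW_j(w,t)\Bigr|\le D\max_i\Bigl|\sum_j c_jW_j(w,t_i)\Bigr|. \tag{$\ast$}\]
The reverse inequality is trivial. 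Taking $a_i(x)=1+(a(x)-1)t_i$ then gives subanalytic sections with $(x,a_i(x))\in C$, and the theorem follows. By homogeneity in $c$, it is enough to prove $(\ast)$ for $c$ in the unit sphere $S\subset\C^m$. In particular, the only parameters left range over the compact set $S\times Z\times[0,1]$.

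The local step goes as follows. Fix $(c_0,w_0,\tau_0)\in S\times Z\times[0,1]$ and put
\[G(c,w,\tau+u)=\sum_j c_jW_j(w,\tau+u),\]
viewed as an analytic function of $(c,w,u)$. Lemma~\ref{veralgemening 4.12 Denef}, applied with $X=(c,w)$ and $Y=u$, gives near this point
\[G=\sum_{k<d}b_k(c,w)\,u^k\,U_k(c,w,u)\]
with units $U_k$. After shrinking to a small box $\Omega\times[\tau_0-\eta,\tau_0+\eta]$, each $U_k$ differs from the nonzero constant $U_k(c_0,w_0,0)$ by a small relative error. Choose $d$ fixed points $u_0,\dots,u_{d-1}$ in $[-\eta,\eta]$ (intersected with $[0,1]$). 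The polynomial $\sum_k b_k U_k(c_0,w_0,0)u^k$ has degree $<d$, so the norms $\max_{|u|\le\eta}|\cdot|$ and $\max_i|\cdot(u_i)|$ on such polynomials are equivalent. A perturbation argument identical in structure to the one in the proof of Lemma~\ref{le:uniformUnboundedNeg} then yields, for all $(c,w)\in\Omega$,
\[\sup_{|u|\le\eta}|G(c,w,\tau_0+u)|\le D_0\max_i|G(c,w,\tau_0+u_i)|.\]
Here we pick $\eta$ small enough that the perturbation is smaller than half of the norm-equivalence constant.

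Compactness of $S\times Z\times[0,1]$ then finishes the argument. Finitely many such boxes cover it; let the $t_i$ be the union of all the chosen points, and let $D$ be the maximum of the $D_0$. For given $(c,w)$ and $t$, the point $(c,w,t)$ lies in some box, so $|G(c,w,t)|$ is at most $D$ times $|G|$ at one of the chosen points, which gives $(\ast)$. The step I expect to be the main obstacle is the local step. It must be uniform in $c$ even where the functions $W_j(w,\cdot)$ become linearly dependent, for instance at boundary points of $Z$ where $x_{>l}\to0$ or $a(x)\to1$. Including $c$ among the parameters in Lemma~\ref{veralgemening 4.12 Denef}, rather than treating each $W_j$ separately, is meant exactly to absorb such rank drops. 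What needs care is checking that the units $U_k$ stay uniformly close to constants on a neighbourhood of the whole fibre in $c$.
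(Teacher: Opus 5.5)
Your argument is correct, but it is organized differently from the paper's.

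\textbf{The paper's route.} The paper factors out a leading term $\log(x_{>l})^S x_{>l}^R$. This makes all remaining coefficients monomials in auxiliary variables $T=(\log x_{>l})^{-1}$ and $w_r=x_{>l}^r$ with $\Re(r)>0$, which tend to $0$ with $x_{>l}$. After the same substitution $y=1+u(a(x)-1)$, $f$ becomes a single convergent power series $h(T,w,x,u)$ near the origin. Lemma~\ref{veralgemening 4.12 Denef} is applied once, in the variables $(T,w,x)$ versus $u$. The resulting linearly independent family $u^i(1+V_i)$ is then fed into Lemma~\ref{le:uniformBounded}, whose proof uses compactness of the unit sphere and an o-minimal bound on the number of zeros. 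Because $T,w$ are small only near $x_{>l}=0$, the paper restricts to $\pi_l(B)\times(0,\varepsilon]^{n-l}$ and handles the rest of the base by induction on $n-l$ through $l'$-rectilinear cells.

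\textbf{Your route.} You treat the unbounded coefficients $x_{>l}^r(\log x_{>l})^s$ as a free vector $c\in\C^m$ and use homogeneity. Only the bounded data $w(x)$ then remain, and one compactness argument over $S\times Z\times[0,1]$ handles the whole base at once. This removes the factoring step, the induction on $n-l$, the appeal to o-minimality, and any linear-independence hypothesis. Putting $c$ among the parameters in Lemma~\ref{veralgemening 4.12 Denef} absorbs rank drops of the $W_j(w,\cdot)$ automatically. The paper's route, in return, isolates a reusable lemma and runs parallel to the unbalanced case.

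\textbf{Minor points.} Your closing worry is unnecessary. The units only need to be close to constants on a small neighbourhood of the single point $(c_0,w_0,\tau_0)$, which continuity gives, and the finite subcover supplies uniformity. Do make the order of choices in the local step explicit:
\begin{enumerate}
\item take $u_i=\eta v_i$ with the $v_i$ fixed, so that by rescaling the norm-equivalence constant $E$ depends only on $d$;
\item measure the error against $\sum_k|b_k U_k(c_0,w_0,0)|\eta^k$;
\item only then shrink $\Omega$ and $\eta$ until the relative error of the units is below $1/(2E)$.
\end{enumerate}
Finally, say in one line why the $W_j$ are analytic near $Z\times[0,1]$. The map expressing $\psi(x,1+(a(x)-1)t)$ in terms of $(w,t)$ is analytic there. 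By continuity it sends $Z\times[0,1]$ into $\overline{\psi(C)}$, where the analytic extensions of the $\psi$-functions are defined.
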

	\begin{lemma}\label{le:uniformBounded} 		
		Let $C \subset \C^n$ be a compact set.  
		Consider functions $f_i \colon C\times [0,1]  \to \C$ for $i \in \{1, \cdots, k\}$ satisfying:
		\begin{enumerate}
			\item for each value of $x$, the functions $f_i(x,\cdot)$ are $\C$-linearly independent.
			\item the $f_i$ are analytic on $C \times [0,1]$. 
		\end{enumerate}
		Then there exists some $N \in \N$ such that, as functions of $(c_i)_i \in \C^k$ and $x$,
		\[  \sup_{y \in [0,1]} \abs{\sum_{i=1}^k c_i f_i(x,y)} \sim \max_{0 < j < N} \abs{\sum_{i=1}^k c_i f_i(x,j/N)} .\]
		%
		%
	\end{lemma}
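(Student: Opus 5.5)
The plan is to reduce the statement to a uniform equivalence-of-norms argument on the compact parameter set $C \times S^{2k-1}$, where $S^{2k-1}$ is the unit sphere in $\C^k$. Both sides are homogeneous of degree one in $c=(c_i)_i$, so we may assume $\norm{c}=1$. One inequality is trivial: the right-hand side is at most the left-hand side, because the points $j/N$ lie in $[0,1]$. So we only need a constant $D$ and an integer $N$ with
\[\sup_{y\in[0,1]}\Bigl|\sum_i c_i f_i(x,y)\Bigr| \le D \max_{0<j<N}\Bigl|\sum_i c_i f_i(x,j/N)\Bigr|\]
for all $x\in C$ and all unit $c$.

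First, we bound the left-hand side above uniformly. The $f_i$ are continuous on the compact set $C\times[0,1]$, so the left-hand side is at most $\norm{c}_1\max_i\sup|f_i|$, which is at most a constant.

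Second, we bound the right-hand side below uniformly. Define
\[\Phi_N(x,c)=\max_{0<j<N}\Bigl|\sum_i c_i f_i(x,j/N)\Bigr|.\]
We claim there are $N$ and $\eta>0$ with $\Phi_N\ge\eta$ on $C\times S^{2k-1}$. To prove this, first use linear independence: for each fixed $(x_0,c_0)$, the function $g_{x_0,c_0}(y)=\sum_i c_{0,i}f_i(x_0,y)$ is a nonzero analytic function on $[0,1]$. Hence there is a nondegenerate interval $[u,v]\subset(0,1)$ on which $|g_{x_0,c_0}|\ge 2\eta_0>0$. By uniform continuity of the $f_i$ on $C\times[0,1]$, there is a neighbourhood $W$ of $(x_0,c_0)$ such that $|g_{x,c}|\ge\eta_0$ on $[u,v]$ for all $(x,c)\in W$. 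Any $N$ with $1/N<(v-u)$ places some grid point $j/N$ in $[u,v]$. Since this property persists for all larger multiples of such an $N$, we cover the compact set $C\times S^{2k-1}$ by finitely many such $W$ and take $N$ to be the product (or lcm) of the finitely many $N$'s involved, with $\eta$ the minimum of the $\eta_0$'s. The lcm works because $1/N$ is then smaller than every interval length, so every interval contains a grid point.

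Combining the two steps gives the desired inequality with $D=\sup/\eta$.

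The main obstacle is making the choice of $N$ uniform in $x$. Pointwise in $x$, the statement is just equivalence of norms on $\C^k$, which is immediate. The compactness argument above handles the uniformity, and the essential inputs are continuity of $f_i$ jointly in $(x,y)$ and linear independence at every $x$, including boundary points of $C$. Analyticity is used only to ensure that a nonzero combination is nonvanishing on some interval; mere continuity would suffice for that step too. Without independence at limit points, $\eta$ could degenerate to $0$, and this is exactly why hypothesis (1) is required for every $x\in C$ rather than generically.
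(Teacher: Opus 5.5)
Your proof is correct, but you obtain the crucial uniform lower bound $\max_{0<j<N}\abs{\sum_i c_i f_i(x,j/N)}\ge\eta>0$ on $C\times S^{2k-1}$ by a different mechanism than the paper.

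The paper uses o-minimality of $\R_{\an}$. The real and imaginary parts of $h(c,x,y)=\sum_i c_i f_i(x,y)$ are definable, and each $h(c,x,\cdot)$ is a nonzero analytic function, so it has finitely many zeros. Uniform finiteness then bounds this number independently of $(c,x)$. Once the $N-1$ interior grid points outnumber this bound, $h(c,x,\cdot)$ cannot vanish on the whole grid, and continuity on the compact set $S^{2k-1}\times C$ upgrades this to a positive minimum. The sup side is treated by continuity of $(c,x)\mapsto\sup_y\abs{h(c,x,y)}$, which does the same job as your bound $\norm{c}_1\max_i\sup\abs{f_i}$.

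You instead localise. You take an interval in $(0,1)$ on which $\abs{g_{x_0,c_0}}$ is bounded below and propagate it to a neighbourhood of $(x_0,c_0)$ by uniform continuity. A finite subcover then allows a Lebesgue-number-type choice of $N$: any $N$ with $1/N$ shorter than the shortest of the finitely many intervals works, so the lcm is unnecessary.

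Your route is more elementary and more general. It needs only joint continuity of the $f_i$ and pointwise linear independence, with no analyticity or definability. Moreover, your $\eta$ does not depend on $N$ once $N$ passes the threshold. This gives the uniformity in $N$ that the paper announces at the start of its proof, which its per-$N$ compactness step does not itself supply (it is not needed for the lemma as stated). In exchange, the paper's route describes the admissible $N$ conceptually, via a uniform zero count. That fits the o-minimal toolkit used throughout the paper, and it is the only place where the analyticity hypothesis is actually used.
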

	\begin{proof}
		Write $h(c,x,y) = \sum_{i=1}^k c_i f_i(x,y)$.
		We will show that there exists a constant $M > 0$ such that for each $x$ and all sufficiently large $N$
		\[ \frac1M \norm{c}_2 \le  \sup_{y \in [0,1]}\abs{h(c,x,y)} \le M \norm{c}_2, \]
		\[\frac1M \norm{c}_2 \le  \max_{0<j < N}\abs{h(c,x,j/N)} \le M \norm{c}_2 .\]
		Note that this proves the lemma and that it suffices to show these inequalities in the case $\norm{c}_2 = 1$.
		 
		For the first inequality, let $S^{2k-1} \subset \C^k$ be the unit sphere in $\C^k$ and consider the map
		\[\varphi: S^{2k-1} \times C \subset \C^{k} \times C \to \R: (c, x) \mapsto \sup_{y \in [0,1]} \abs{h(c,x,y)}.\]
		We claim that $\varphi$ is continuous.
		Indeed: this follows from the inequality
		\[\abs{\varphi(c_1, x_1)-\varphi(c_2, x_2)} \le \sup_{y \in [0,1]} \abs{h(c_1, x_1, y)-h(c_2, x_2, y)}\]
		together with uniform continuity of $h$ on $S^{2k-1} \times C \times [0,1]$.
		Hence $\varphi$ has a compact image.
		Moreover, this image cannot contain $0$ by the fiberwise linear independence of the $f_i$.
		
		To find an appropriate $N$ for the second set of inequalities, note that the real and imaginary parts of $h$ are definable in $\R_\an$.
		Also recall that $h(c,x,\cdot)$ is never identically zero for $(c,x) \in S^{2k-1} \times C$ by the fiberwise linear independence of the $f_i$. 
		By o-minimality of $\R_\an$, it follows that there is some upper bound on the number of zeros, independent of $(c,x)$. 
		Let $N$ be strictly larger than this bound and consider the map
		\[ S^{2k-1} \times C \to \R \colon (c, x) \mapsto \max_{0 < j < N} \abs{h(c, x, j/N)}, \]
		whose image is never zero.
		Now a similar argument as for $\varphi$ establishes the second set of inequalities and finishes the proof.
	\end{proof}
\begin{proof} \textit{of Theorem \ref{th:Balanced}}
	We first observe that on a balanced cell, $\log(y)$ and powers of $y$ are analytic on the closure of the cell.
	Now use that $f$ is $l$-prepared, and factor out logarithms and powers of $x_{>l}$ to write
	\begin{equation} \label{eq:prepared_balanced}
		f(x,y) =  \log(x_{>l})^S (x_{>l})^R  \sum_{s} (\log x_{>l})^s \sum_r x_{>l}^{r} f_{rs}(x,y),
	\end{equation}
	with $S,R \in \Q^l$, all $s \in \Z_{\le 0}^{l}$, all $\Re(r) > 0$ and where the $f_{rs}$ are $\varphi$-functions for some $\varphi(x,y) = (x,x_{>l}^{q_1}y^{-1}, x_{>l}^{q_2} y)$.
	Clearly, it suffices to approximate the supremum of 
	\[ g(x,y) = \sum_{s} (\log x_{>l})^s \sum_r x_{>l}^{r} f_{rs}(x,y). \]
	By boundedness of $\varphi$, we have that $q_1,q_2 \geq 0$. 
	Since $y$ is moreover bounded away from both $0$ and $\infty$, it follows that $\varphi$ is analytic on the closure of $C$.
	The $f_{rs}$ are the composition of some analytic function on $\overline{\varphi(C)}$ with $\varphi$. 
	Hence, the $f_{rs}$ themselves are analytic on $\overline{C}$.
	Note that if $\K \not\subset \R$, then the $f_{rs}$ are complex-valued in general.
	
	By the same argument, $a(x)$ is analytic on $\overline{C}$, hence we can perform the analytic coordinate change $y=1+u(a(x)-1)$ to write
	\[ f_{rs}(x,y) =  g_{rs}(x,u)  \]
	for functions $g_{rs}$, analytic on $C \times [0,1]$.
	
	Write $B = \pi_n(C)$ for the base of $C$.
	By assumption $B = \pi_l(B) \times (0,1)^{n-l}$, where the closure of $\pi_l(B)$ is a compact subset of $(0,1]$.
	
	Locally around each point $(c,0,d) \in \overline{\pi_l(B)} \times [0,1]^{n-l} \times [0,1]$, there is a neighbourhood where each $g_{rs}(x,u)$ can be written as a single convergent power series.
	Moreover, by compactness of $\overline{\pi_l(B)}$ and $[0,1]$, there is some $\varepsilon > 0$ such that finitely many such neighbourhoods cover $\pi_l(B) \times [0,\varepsilon]^{n-l} \times [0,1]$. 
	Now the complement of $\pi_l(B) \times (0,\varepsilon]^{m-l}$ can be split up in finitely many $l'$-rectilinear cells. 
	Additionally, on each of these cells, $f$ is $l'$-prepared. 
	Indeed, on any rectilinear cell where the coordinate $x_i$ is bounded away from $0$, both $\log(x_i)$ and any powers of $x_i$ are analytic on the closure of that cell.
	
	Hence, by induction on $n-l$, it suffices to approximate the supremum on a neighbourhood of $(c,0,d)$.
	Indeed, in the case $n=l$, we can cover the entire cell by finitely many neighbourhoods of points $(c,d)\in \pi_n(B) \times [0,1]$.
	For notational convenience, we consider the case where both $c$ and $d$ are zero.
	We may thus assume that all $g_{rs}(x,u)$ are given by a single convergent power series centered at $0$.
	
	We now introduce new variables $T_i = (\log x_i)^{-1}$ and $w_r = (x_{> l})^r$ and write
	\[ h(T,w,x,u) =  \sum_{s} T^s \sum_r w_r g_{rs}(x,u) \]
	By Lemma~\ref{veralgemening 4.12 Denef}, we may write (locally around $0$)
	\[ h(T,x,w,u) = \sum_{i < d} a_i(T,w,x) u^i (1 +  V_i(T,w,x,u)).  \]
	Suppose this way of writing $h$ is valid on a polydisk $\{ \abs{z_i} \le \delta \}^M$.
	As the $u^i(1 + V_i(T,w,x,u))$ are analytic and $\C$-linearly independent as power series for all $T,w,x$, Lemma~\ref{le:uniformBounded} yields some $N \in \N$ such that
	\[ \sup_{u \in [0,\delta]} \abs{h(T,w,x,u)} \sim \max_{0 < j < N}\abs{h(T,w,x, j\delta/N)}.  \]
	As for $x_{>l}$ sufficiently small, we have $\abs{T}, \abs{w_r} \le \delta$ (this uses that all $\Re(r) > 0$), it follows that there is some $\varepsilon > 0$ such that for $(x,u) \in [0,\varepsilon]^{m+1}$
	\[ \sup_{u \in [0,\delta]} \abs{g(x,u)} \sim \max_{0 < j  < N}\abs{ g(x,j\delta/N)} . \]
	This is precisely what we still needed to prove.
\end{proof}
\section{Proof of the main theorem}

The results of Sections~\ref{sec:unbalanced} and \ref{sec:balanced} combine to prove our main theorem~\ref{th:main_intro}.
We restate it here for convenience and observe some immediate corollaries.

\begin{theorem}\label{th:main}
	Let $Y \subset \R^{m+n}$ be subanalytic and let $f \colon Y \to \R \colon (x,y) \mapsto f(x,y)$ be a map in $\cC^\mathbb{K}(Y)$ such that $\sup_{y \in Y_x} \abs{f(x,y)}$ is finite for all $x \in \pi_m(Y) \coloneqq X$.
	Then there exist finitely many maps $g_1, g_2, ..., g_k \in \mathcal{C}^{\K}(X)$ and a constant $C>0$ such that
	\[\frac1C \max\{\abs{g_1(x)}, ..., \abs{g_k(x)}\} \le \sup_{y \in Y_x} \abs{f(x,y)} \le C \max\{\abs{g_1(x)}, ..., \abs{g_k(x)}\}\]
	for all $x \in X$.
	Moreover, if $\K \subset \R$, then there exist subanalytic maps $a_1, ..., a_k$ for which $(x,a_i(x)) \in Y$ for all $x \in \pi_m(Y)$ and such that we may take $g_i=f(x,a_i(x))$.
\end{theorem}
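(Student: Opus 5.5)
We argue by induction on $n$, the number of variables we take the supremum over. The key observation is that for $y=(y',y_n)$ with $y'\in\R^{n-1}$,
\[\sup_{y\in Y_x}\abs{f(x,y)}=\sup_{y'}\sup_{y_n\in Y_{(x,y')}}\abs{f(x,y',y_n)}.\]
Suppose the theorem is known for one variable, applied to $f$ viewed as a function of $y_n$ with parameters $(x,y')$. This gives $g_1,\dots,g_k\in\cC^\K(\pi_{m+n-1}(Y))$ with $S_f\sim\max_i\abs{g_i}$ on the inner supremum. Then
\[\sup_{y}\abs{f}\sim\sup_{y'}\max_i\abs{g_i(x,y')}=\max_i\sup_{y'}\abs{g_i(x,y')}.\]
Each $g_i$ is fiberwise bounded because $\abs{g_i}\le C S_f$, so the induction hypothesis with $n-1$ variables applies to each $g_i$. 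A maximum of finitely many maxima of absolute values is again of the required form, and the implicit constants multiply. When $\K\subset\R$, the witnessing maps compose: if $g_i(x,y')=f(x,y',b_i(x,y'))$ and $g_{i,j}(x)=g_i(x,c_{ij}(x))$, then $a_{ij}(x)=(c_{ij}(x),b_i(x,c_{ij}(x)))$ is subanalytic and lands in $Y_x$. So the whole problem reduces to $n=1$.

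For $n=1$ we first apply Proposition~\ref{prop: l-prep power constructible} to $f$ on $Y\subset\R^{m+1}$. This partitions $Y$ into finitely many cells. Since the supremum over a finite union is the maximum of the suprema over the pieces, it suffices to treat each cell separately; if a cell's projection is a proper subset of $X$, we extend its $g$'s by $0$ outside that projection, which keeps them in $\cC^\K(X)$.

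The cells come in three kinds, handled as follows.
\begin{enumerate}
\item On a thin cell (a graph $y=\theta(x)$), the supremum is simply $\abs{f(x,\theta(x))}$, which is power-constructible, and $a=\theta$ serves as the witness.
\item On a cell with an $l$-bijection $F=(F_1,F_2)\colon C\to B_i$, we have $\sup_{y}\abs{f\circ F(x',y)}=S_{f|_{B_i}}(F_1(x'))$. So it suffices to prove the result for the prepared function $f\circ F$ on the $l$-normalized cell $C$ and then pull the $g_i$ back along the subanalytic bijection $F_1^{-1}$. This preserves power-constructibility and, in the real case, the form $f\circ a_i$.
\item On a balanced normalized cell, Theorem~\ref{th:Balanced} finishes directly. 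On an unbalanced cell, Theorem~\ref{th:Unbalanced} only controls the supremum over $C_{N,\varepsilon}$ after a decomposition of $\pi_l(B)$.
\end{enumerate}

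The remaining work, and I expect the main obstacle, is covering the rest of the unbalanced cell.

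\emph{Base outside $B_\varepsilon$.} The part of the base lying outside $B_\varepsilon$ is a finite union of rectilinear cells on which some $x_i$, $i>l$, is bounded away from $0$. There $a(x)$ is bounded, so after re-preparing, the cell above it is balanced.

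\emph{Over $B_\varepsilon$.} Over $B_\varepsilon$ we must handle the strips $1<y\le N$ and $a(x)/N\le y<a(x)$. The first is balanced directly. The second becomes balanced after the substitution $y=a(x)/t$ with $1<t<N$; this requires $f$ to be re-prepared on the new cell, for which we apply Proposition~\ref{prop: l-prep power constructible} again.

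\emph{Termination.} For this recursion to end, I will induct on $n-l$ together with the number of cells. Each re-preparation either produces balanced cells or strictly enlarges $l$ on the leftover base. Checking that the re-preparation does not reintroduce unbalanced cells over $B_\varepsilon$ is the delicate point. If it fails, I would instead apply the balanced argument to the compact $t$-range $[1,N]$ directly, using that $\log t$ and $t^{r}$ are analytic there, as in the proof of Theorem~\ref{th:Balanced}.

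Finally, combine the finitely many pieces by taking the maximum.
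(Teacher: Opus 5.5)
Your architecture matches the paper's: reduce to $n=1$, prepare with Proposition~\ref{prop: l-prep power constructible}, use Theorem~\ref{th:Balanced} on balanced cells and Theorem~\ref{th:Unbalanced} on $C_{N,\varepsilon}$, then cover the rest by induction on $n-l$. But the final covering step, which you rightly flag as the crux, does not work as written.

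\emph{The leftover base.} Your claim that $a$ is bounded over $B\setminus B_\varepsilon$ is false once $n-l\ge 2$. Take $a(x)=2x_{l+1}^{-1}x_{l+2}^{-1}$. On the piece where $x_{l+1}\ge\varepsilon$, one still has $a\to\infty$ as $x_{l+2}\to 0$, so the cell above is unbalanced and Theorem~\ref{th:Balanced} cannot be applied there.

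\emph{Re-preparation.} Invoking Proposition~\ref{prop: l-prep power constructible} afresh, whether over $B\setminus B_\varepsilon$ or on the strip $a(x)/N\le y<a(x)$, gives you no decreasing invariant. That proposition says nothing about which $l$ or which kind of cells it outputs. So your assertion that each re-preparation ``produces balanced cells or strictly enlarges $l$'' is unsupported, and the recursion need not terminate.

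The missing observation, which is how the paper closes the argument, is that no new preparation is needed: the prepared forms you already have survive.

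\emph{Leftover base.} Consider a piece of $B\setminus B_\varepsilon$ where $x_i\in[\varepsilon,1)$ for $i$ in a nonempty $I\subseteq\{l+1,\dots,n\}$. For $i\in I$, the functions $x_i^r$ and $\log x_i$ are analytic on the closure. Move these coordinates into the first block and rescale the remaining ones. The cell is then $l'$-normalized with $l'=l+\abs{I}>l$, and the existing expressions for $f$ and $a$ are already $l'$-prepared. You then rerun the whole argument, Theorem~\ref{th:Unbalanced} included. The induction on $n-l$ stops because $n$-rectilinear cells are automatically balanced.

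\emph{Upper strip.} What makes your fallback work is that $a=x_{>l}^QR(x)$ is itself $l$-prepared. Under $y=a(x)/t$ with $1<t<N$:
\begin{itemize}
\item $y^b$ becomes $x_{>l}^{Qb}R(x)^bt^{-b}$;
\item $\log y$ becomes $Q\cdot\log x_{>l}+\log R(x)-\log t$;
\item the components of $\psi$ become $x_{>l}^{q_1-Q}R(x)^{-1}t$ and $x_{>l}^{q_2+Q}R(x)t^{-1}$, which are bounded with nonnegative exponents.
\end{itemize}
Hence $f(x,a(x)/t)$ is directly $l$-prepared on a balanced cell, after possibly a power substitution in $x$ to make exponents integral, and Theorem~\ref{th:Balanced} applies. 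Your fallback points this way but does not identify that it rests on $a$ being $l$-prepared.

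A smaller point concerns the ``moreover'' clause. Extending by $0$ destroys the form $g_i=f(x,a_i(x))$. Instead, extend $a_i$ by any subanalytic section of $Y\to X$; this is harmless because $\abs{f(x,a(x))}\le S_f(x)$.
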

%

\begin{proof} 
	We may reduce to the case $n = 1$ by induction.
	Moreover, by Proposition~\ref{prop: l-prep power constructible}, it suffices to show this for an $l$-prepared function $f$ for some $l$.
	Indeed, if $F = (F_1,F_2)$ is an $l$-bijection, then $S_{f \circ F}(x) = S_{f}(F_1(x))$.
	We are done by Theorem \ref{th:Balanced} if $C$ is balanced, so we will assume that $C$ is unbalanced.
	
	By Theorem \ref{th:Unbalanced}, we need to deal only with $C \setminus C_{N, \varepsilon}$.
	Let $B$ be the base of $C$.
	Note that $(B \setminus B_\varepsilon) \times \R \cap C$ is the union of $l'$-rectilinear cells for several $l'>l$. 
	An induction on $n-l$ finishes this case. Indeed, all $n$-rectilinear cells are necessarily balanced.
	
	So all that remains is dealing with $B_\varepsilon \times \R \cap (C \setminus C_{N, \varepsilon})$.
	This set consists of a balanced cell and a set of the form $a(x)/N<y<a(x)$.
	Substituting $y= y' a(x)/N $ also transforms this second part into a balanced cell.
	Since the subanalytic function $a(x)$ is $l$-prepared, the resulting constructible function $\tilde{f}(x,y') = f(x,y' a(x)/N)$ can be (re)written in a prepared form. 
	Note that this rewrite might involve one more power-substitution in $x$ to guarantee that the relevant exponents of $x$ are integral rather than rational.
\end{proof}

If $f$ is everywhere non-negative and the field of exponents is formally real, one function $g(x)$ suffices.

\begin{corollary}\label{cor:fpos}
	Let $Y \subset \R^{m+n}$ be subanalytic, and let $f(x,y) \in \cC^{\mathbb{K}}(Y)$ be such that $\sup_y\abs{f(x,y)} < \infty$ for all $x \in \pi_m(Y)$. 
	Assume additionally that $f(x,y) \geq 0$ for all $(x,y) \in Y$.
	Then, if $\mathbb{K} \subset \R$, there exists $g(x) \in \cC^\mathbb{K}(\pi_m(Y))$ and a constant $C > 0$ such that
	\[ \frac1C g(x) \le \sup_{y \in Y_x} \abs{f(x,y)} \le C g(x).  \]
\end{corollary}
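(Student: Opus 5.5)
The natural route is to apply Theorem~\ref{th:main} and then collapse the maximum of finitely many functions into a single one. Since $\K \subset \R$, the ``moreover'' part of Theorem~\ref{th:main} gives subanalytic maps $a_1, \ldots, a_k$ with $(x, a_i(x)) \in Y$ such that
\[ S_f(x) \sim \max_i \abs{f(x, a_i(x))} = \max_i f(x, a_i(x)). \]
The equality uses $f \ge 0$. Each $g_i(x) := f(x, a_i(x))$ is real-valued, lies in $\cC^\K(\pi_m(Y))$ by composition with subanalytic maps, and is non-negative. It therefore remains to replace $\max_i g_i$ by a single element of $\cC^\K$, up to a constant.

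For this I would take the sum $g = g_1 + \cdots + g_k$. It belongs to $\cC^\K(\pi_m(Y))$ because $\cC^\K$ is an algebra. Since every $g_i \ge 0$, we have
\[ \max_i g_i \le g \le k \max_i g_i, \]
so $g \sim \max_i g_i \sim S_f$, with implicit constant $kC$. This completes the argument.

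The only step needing care is realness. Without the hypothesis $\K \subset \R$, the $g_i$ supplied by Theorem~\ref{th:main} are complex-valued and are not of the form $f \circ a_i$, so neither $\abs{g_i}$ nor $\sum_i \abs{g_i}$ need lie in the class, and the sum trick could suffer cancellation. Under $\K \subset \R$ with $f \ge 0$, positivity of each $g_i$ is exactly what prevents cancellation. I do not expect any further obstacle, beyond checking that composing a $\K$-power-constructible function with a subanalytic section stays in $\cC^\K$. That holds because each generator (a subanalytic $h$, $\log h$, or $h^r$) composes to a generator of the same type.
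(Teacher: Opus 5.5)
Your proposal is correct and follows the paper's own argument: invoke the ``moreover'' part of Theorem~\ref{th:main} (available since $\K \subset \R$) to get $g_i = f(x,a_i(x)) \ge 0$, then replace $\max_i g_i$ by $g = g_1 + \cdots + g_k$. Your extra checks, that composition with a subanalytic section stays in $\cC^\K$ and that the implicit constant becomes $kC$, are details the paper leaves implicit.
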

\begin{proof}
	If $f(x,y) \geq 0$ everywhere, then the `moreover' part of Theorem~\ref{th:main} guarantees in particular that $g_i(x) \ge 0$ for all $i$. Since
	\[ g_1(x) + \ldots + g_k(x) \sim \max_{i}\{g_1(x),\ldots, g_k(x)\}, \]
	the corollary follows upon taking $g(x) = g_1(x) + \ldots + g_k(x)$.
\end{proof}

If the $x$-variables live in ambient dimension 1, we obtain explicit bounds on the supremum.
The Adiceam-Cluckers conjecture~\ref{conj cluckers adiceam} follows from Corollary~\ref{cor:CA-almost} after performing a change of variables $x \mapsto 1/x$.

\begin{corollary} \label{cor:CA-almost}
		Let $Y \subset \R^{1+n}$ and  $f(x,y) \in \cC^{\mathbb{K}}(Y)$ be such that $\sup_y\abs{f(x,y)} < \infty$ for all $x \in \pi_m(Y)$.  Assume additionally that $\K \subset \R$.
		Then there exist $r \in \K$, $l \in \N$ and a constant $C > 0$ such that for all sufficiently large $x$
		\[   \frac1C x^{r} (\log x)^{l} \le \sup_{y \in Y_x} \abs{f(x,y)} \le C  x^r (\log x)^l. \ \]
\end{corollary}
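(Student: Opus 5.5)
Apply Corollary~\ref{cor:fpos} to $\abs{f}$ and then use one-variable preparation to read off the asymptotics of the resulting function. First, $\abs{f}$ itself need not be power-constructible, but Theorem~\ref{th:main} still applies directly to $f$. Since $\K\subset\R$, its `moreover' part gives subanalytic maps $a_1,\dots,a_k$ with $\sup_{y}\abs{f(x,y)}\sim\max_i\abs{g_i(x)}$, where $g_i(x)=f(x,a_i(x))\in\cC^\K(X)$ and $X=\pi_1(Y)\subset\R$. Because $\K\subset\R$, each $g_i$ is real-valued. It therefore suffices to show two things for sufficiently large $x$: each $g_i$ satisfies $\abs{g_i(x)}\sim x^{r_i}(\log x)^{l_i}$ or vanishes identically, and the maximum of finitely many such monomials is again comparable to one of them.

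For the first point, fix a real-valued $g\in\cC^\K(X)$, with $X\subset\R$ unbounded above (otherwise there is nothing to prove). By definition, $g$ is a finite sum of products of subanalytic functions, logarithms of positive subanalytic functions, and real powers $h^r$ with $h>0$ subanalytic and $r\in\K$. I would apply the one-variable preparation theorem (Theorem~\ref{th:prep_subanalytic} with $n=0$, after the substitution $x\mapsto 1/x$ to work near $0$ if convenient) to all subanalytic functions occurring. On a final interval $(R,\infty)$ each such function $h$ then has the form $c\,x^{q}U(x)$, where $q\in\Q$ and $U$ is an analytic unit in $x^{-1/p}$ with $U(\infty)\neq0$. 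Consequently $\log h=\log\abs{c}+q\log x+\log U$, where $\log U$ is analytic in $x^{-1/p}$, and $h^r=\abs{c}^r x^{qr}U^r$. Expanding, $g$ becomes a finite sum
\[
g(x)=\sum_{j} x^{\rho_j}(\log x)^{m_j}\,V_j(x^{-1/p}),\qquad \rho_j\in\K,\ m_j\in\N,
\]
with each $V_j$ convergent near $0$. Expanding each $V_j$ as a power series and collecting terms, the candidate leading term is obtained by ordering pairs $(\rho_j+\text{integer}/p,\,m_j)$ lexicographically. Only finitely many exponents compete, since the $\rho_j$ are finitely many reals and the $V_j$ contribute only decreasing powers. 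The largest pair with nonzero total coefficient $c_0$ gives $g(x)=c_0x^{r}(\log x)^{l}(1+o(1))$. If no coefficient survives, then $g$ vanishes near $\infty$: by the linear independence of the functions $y^\alpha(\log y)^\gamma$ (Lemma~\ref{functies zijn independent} and its proof), a convergent series of this form with all coefficients zero is identically zero.

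This leading-term step is the main obstacle. I must ensure that the exponent $r$ lies in $\K$; it does, because $\rho_j\in\K$ and $\Q\subset\K$. I must also handle cancellation rigorously, which I would do by grouping the $\rho_j$ into classes modulo $\frac1p\Z$ and treating each class as a single Puiseux-type series in $x^{-1/p}$ with polynomial coefficients in $\log x$.

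Finally, the nonvanishing $g_i$ satisfy $\abs{g_i}\sim x^{r_i}(\log x)^{l_i}$ for large $x$. The maximum of these is then comparable to the monomial with the lexicographically largest $(r_i,l_i)$, because for large $x$ this monomial dominates all the others up to a constant. Not all $g_i$ can vanish, unless the supremum is eventually zero, a degenerate case excluded by the statement. This yields $r\in\K$, $l\in\N$ and $C>0$ as claimed.
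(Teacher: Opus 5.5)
Your argument is correct and is essentially the paper's: both apply Theorem~\ref{th:main}, prepare the univariate building blocks on a final interval $(R,\infty)$, group exponents to control cancellation, and read off the lexicographically largest pair $(r,l)$. The differences are minor. You analyze each $g_i$ separately and take the maximum of the resulting monomials, which makes the paper's appeal to o-minimality (to put $\abs{g_1}+\dots+\abs{g_k}$ back into $\cC^\K$) unnecessary. You also resolve cancellations by hand via Puiseux expansions grouped modulo $\frac1p\Z$, whereas the paper groups modulo $\Q$ and re-prepares the resulting subanalytic sums.

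One correction: the eventually-zero case is not excluded by the statement as written. The statement simply fails there, and the paper's proof does too, since no maximal pair exists. It should therefore be added as a hypothesis rather than dismissed.
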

\begin{proof}
	Let $g_1, g_2, ..., g_k$ be as in the conclusion of Theorem \ref{th:main}.
	Note that by o-minimality of $\R_{\mathrm{an}, \exp}$, $|g_1|$, ..., $|g_k|$ are also $\K$-power constructible, as the locus $g_i(x)<0$ is a finite union of intervals and points.
	Hence $g(x)=|g_1(x)|+|g_2(x)|+...+|g_k(x)|$ is also constructible and $g(x) \sim \sup_y |f(x,y)|$ on $\R$.
	
	By simultaneous preparation for subanalytic functions (Theorem~\ref{th:prep_subanalytic}), and only looking at the unique resulting cell that is unbounded to the right, we may then write $g$ as a finite sum
	\[ g(x) = \sum_{l} (\log x)^l \sum_{i}  u_{il}(x) x^{r_{il}} .\]
	for $x$ sufficiently large. Here $l \in \N$, $r_{il} \in \K$ and the $u_{il}(x)$ are subanalytic units.
	Let $a_1,\ldots,a_k$ be such that the classes $a_i + \Q \in \R/\Q$ are pairwise distinct and contain the images of all $r_{il}$.
	Then we may write
	\[ g(x) = \sum_{il} (\log x)^l  x^{a_i} \sum_{j=1}^k w_{ijl}(x) x^{r_{ijl}}, \] 
	where all the $w_{ijl}$ are still special units and the $r_{ijl}$ are now all rational. 
	In particular, all the sums $ \sum_{j=1}^N w_{ijl}(x) x^{r_{ijl}}$ are subanalytic and we can apply simultaneous preparation once more to obtain that for all sufficiently large $x$
	\[ g(x) =  \sum_{rl} v_{rl}(x)  x^r (\log x)^l , \]
	for certain special units $v_{lr}$, $r \in \mathbb{K}$ and $l \in \N$. 
	Denote by $(r_0,l_0)$ the (lexicographic) maximum among all $(r,l)$.
	Then $g(x) \sim x^{r_0} (\log x)^{l_0}$, and we are done.
\end{proof}

\section{Applications of the main theorem}\label{sec:corollaries}

\subsection{Measure of flatness}
As explained in the introduction, Corollary \ref{conj cluckers adiceam} above resolves a conjecture of Adiceam and Cluckers stated in \cite{AM2023}. That paper addresses the following question: in \cite{S97}, Sarnak conjectured an upper bound for the number of integer lattice points satisfying a system of polynomial inequalities, see Conjecture \ref{conj: sarnak} above. However, Marmon and Adiceam show that an additional condition related to the measure of flatness is necessary ~\cite[Thm.\,6.1]{AM2023}. For applications of this theorem, an effective way to compute the measure of flatness is useful. Corollary \ref{conj cluckers adiceam} guarantees that this can be done in a finite number of steps, as worked out in \cite{AM2023}. 

Now we can present Theorem 6.1 from \cite{AM2023}, which formulates an answer to Question \ref{question AM} if $\alpha > 1$. Recall that $F = (F_1, \cdots, F_p)$, where $\{F_1(x), \cdots, F_p(x)\}$ is a set of real homogeneous forms of the same degree on $\R^n$, with $p \geq 1$ and $n \geq 2$, and denote $\operatorname{Vol}_n$ for the Lebesgue measure on $\R^n$.
\begin{theorem}[Adiceam-Marmon] \label{thm: solution to sarnaks conjecture for alpha > 1}
	Consider a real number $\alpha > 1$. Assume that $K$ satisfies the assumptions of Conjecture \ref{conj: sarnak}. Moreover, assume that $K$ is semi-algebraic and generic enough so that the singularities of the algebraic variety $\{x \in \R^n: F(x)=0\}$ do not lie on its boundary (see~\cite[p.\,10]{AM2023}). Let $\mathcal{K}$ be a set containing $K$ in its interior, and contained in a set $U$ defined from a support restriction condition (see~\cite[p.\,11]{AM2023}). Denote $\tau_F(\mathcal{K}) = \dim(\{x \in \mathcal{K}\mid F(x) = 0\})$.

	There is some exponent $\delta_F(K, \mathcal{K})$ such that
\[
\begin{aligned}
	\#\Big(
	\{x \in T \cdot K : \norm{F(x)} \le T^{d-\alpha}\}
	\cap \mathbb{Z}^n
	\Big)
	\ll\;&
	\operatorname{Vol}_n\Big(
	\{x \in T \cdot K : \norm{F(x)} \le T^{d-\alpha}\}
	\Big) \\
	&+ T^{\tau_F(\mathcal{K})-\delta_F(K,\mathcal{K})}
\end{aligned}
\]
	holds, if an additional assumption involving the measure of flatness is satisfied.
%
%
%
\end{theorem}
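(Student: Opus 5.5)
The plan is to follow Sarnak's strategy from \cite[App.\,1]{S97}, as reworked in \cite{AM2023}. We compare the lattice point count in the thin region
\[
R_T=\{x \in T \cdot K : \norm{F(x)} \le T^{d-\alpha}\}
\]
with its volume through a Davenport-type (Lipschitz principle) estimate. The measure of flatness then controls the resulting error terms.

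\textbf{Rescaling and localization.} First rescale $x=Tu$. By homogeneity, $R_T/T=\{u \in K : \norm{F(u)} \le T^{-\alpha}\}$, which is a tubular neighbourhood of width $\asymp T^{-\alpha}$ of $V_K=\{u\in K: F(u)=0\}$ wherever $\nabla F_1,\dots,\nabla F_p$ are independent. The genericity assumption says the singular points of $V$ avoid $\partial K$. Together with the support restriction ($\mathcal{K}\subset U$), this lets us cover $K$ by finitely many semi-algebraic pieces of two kinds: pieces where $V$ is a smooth graph over an $(n-p)$-dimensional coordinate subspace, and pieces shrinking around the singular locus.

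\textbf{Smooth pieces.} On a smooth piece, $R_T$ is, in graph coordinates, a fibration over an $(n-p)$-dimensional region of size $T$ with fibres of diameter $\asymp T^{1-\alpha}<1$, since $\alpha>1$. Davenport's lemma for semi-algebraic sets of bounded complexity applies here. It bounds $\#(R_T\cap\Z^n)$ by $\operatorname{Vol}_n(R_T)$ plus the sum of the volumes of the projections of $R_T$ onto all coordinate subspaces, so the main term is exactly the volume term of the theorem. Because the fibres are thinner than the lattice spacing, each projection onto a subspace of dimension $j$ has volume at most about $T^{j}$ times the $j$-dimensional measure of a $T^{-\alpha}$-neighbourhood of the projection of $V_K$. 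Hence only projections of dimension at most $\tau_F(\mathcal{K})$ contribute non-trivially.

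\textbf{Singular pieces and effective exponents.} On the pieces near the singular locus, decompose dyadically by the distance $\rho$ to the singular set. The volume of the $\rho$-neighbourhood, and the quantitative non-degeneracy of the gradients there, are constructible functions of $\rho$ and of the width parameter. This is where Corollary~\ref{conj cluckers adiceam} enters: each relevant supremum over the fibre is $\sim\rho^{r}\abs{\log\rho}^{l}$ with explicit $r,l$, so summing over the dyadic scales gives a power saving. In this way every error contribution becomes $\ll T^{\tau_F(\mathcal{K})-\delta}$ for an explicit $\delta$, and $\delta_F(K,\mathcal{K})$ is taken as the minimum of these savings.

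\textbf{Main obstacle.} The hard step is bounding the projection volumes when $V_K$ is ``too flat'', i.e.\ nearly contained in a lower-dimensional (rational) affine subspace. In that case some projection does not shrink, and a $\tau_F$-dimensional projection can carry mass comparable to $T^{\tau_F}$, which would cost the saving $\delta$. The additional flatness hypothesis is designed to rule this out. I would first encode flatness as the exponent governing how the measure of $\{u\in\mathcal{K}: \operatorname{dist}(u,L)\le\varepsilon\}\cap V$ decays as $\varepsilon\to0$, uniformly over affine subspaces $L$. By Corollary~\ref{conj cluckers adiceam} this quantity has a genuine asymptotic $\varepsilon^{r}\abs{\log\varepsilon}^{l}$ rather than only a $\liminf$. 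I would then check that a strictly positive such exponent gives the required $T^{-\delta}$ saving in every projection term.
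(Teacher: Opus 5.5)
The paper does not prove this statement. It is Theorem~6.1 of \cite{AM2023}, quoted with its flatness hypothesis left unspecified, so there is nothing in the paper to compare with and your sketch has to stand on its own. It does not, because the Davenport step can never produce a power saving. Davenport's lemma bounds $\abs{\#(R_T\cap\Z^n)-\operatorname{Vol}_n(R_T)}$ by the volumes of the projections of $R_T$ onto all coordinate subspaces. As you note yourself, a projection onto a coordinate subspace of dimension $\tau=\tau_F(\mathcal K)$ has volume about $T^{\tau}$ times the measure of the projection of $V_K$. As soon as $V_K$ has dimension $\tau$, that measure is positive for some coordinate subspace, however curved $V$ is. So this route returns at best the trivial bound $T^{\tau}$, which is what one gets by covering $T\cdot V_K$ with unit cubes, with no factor $T^{-\delta}$. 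Concretely, take $F=x_1^2+x_2^2-x_3^2$ on $\R^3$ and $K$ a small closed ball around $(1,0,1)$. Theorem~\ref{thm sarnak intro} gives $\ll T^{3-\alpha}+T^{2-\delta}$, while the projection of $R_T$ onto the $(x_1,x_2)$-plane has area $\asymp T^{2}$. The saving comes from curvature, i.e.\ from oscillation, which the Lipschitz principle does not see. No flatness hypothesis can make coordinate projections shrink, so your final paragraph assigns the flatness assumption a task it cannot perform.

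The missing ingredient is a Fourier-analytic argument, sketched as follows. Majorize $\mathds{1}_K$ by a smooth bump supported in $\mathcal K$ (this is what $K\subset\operatorname{int}\mathcal K$ is for), smooth the condition $\norm{F(u)}\le T^{-\alpha}$ at its own scale, and apply Poisson summation. The zero frequency gives the volume term. The non-zero frequencies need a power decay, uniform in the direction, of the Fourier transform of the smoothed sublevel set. This is where flatness genuinely enters. The Fourier transform of a set $S$ in a direction $\theta$ is the one-dimensional Fourier transform of $t\mapsto\operatorname{Vol}_{n-1}(S\cap\{\theta\cdot u=t\})$. These are the hyperplane sections of sublevel sets from which $\mathcal M_F(\mathcal K,\varepsilon)$ is built.

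Relatedly, your surrogate notion of flatness is phrased through $V$ and neighbourhoods of affine subspaces, and you give no argument for the condition that the exponent be strictly positive. Near points where $\nabla F_1,\dots,\nabla F_p$ become dependent, which is exactly what Sarnak's gradient hypothesis excluded, the sublevel sets $\{\norm{F}\le\varepsilon\}$ are not comparable to tubes around $V$. A condition on $V$ alone therefore does not capture what is needed.

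Finally, Corollary~\ref{conj cluckers adiceam} is not the engine here. It was still Conjecture~6.9 of \cite{AM2023} when Theorem~6.1 there was proved. In this paper it only upgrades the $\liminf$ defining $q_F(\mathcal K)$ to a limit with explicit error term. An asymptotic $\rho^{r}\abs{\log\rho}^{l}$ gives no saving in your dyadic sum unless you control the sign and size of $r$, which you do not.
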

In order to use Theorem \ref{thm: solution to sarnaks conjecture for alpha > 1}, it is necessary to determine the measure of flatness. This is where Conjecture \ref{conj cluckers adiceam} comes into play. 
The measure of flatness quantifies the maximal level of flatness that can arise when intersecting the sublevel set 
\[
\{x \in K : F(x) \le \varepsilon\}
\]
with affine hyperplanes. In particular, the smaller this measure, the flatter the intersection of the sublevel set in some direction. It is defined as follows:
\[q_{F}(\mathcal{K}) = \liminf_{\varepsilon \to 0^+} \frac{\log \mathcal{M}_F(\mathcal{K}, \varepsilon)}{\log \varepsilon},\]
where the function $\mathcal{M}_F(\mathcal{K}, \cdot): \varepsilon > 0 \to \mathcal{M}_F(\mathcal{K}, \varepsilon)$ can be shown to be the parametric supremum of a constructible function. By Corollary \ref{conj cluckers adiceam}, there are constants $c \in \R, \delta \in \R^+_0, a \in \Q, l \in \Z$, such that
\[\delta \cdot c \cdot \abs{\log \varepsilon}^l \cdot \varepsilon^a \leq \mathcal{M}_F(\mathcal{K}, \varepsilon) \leq c \cdot \abs{\log \varepsilon}^l \cdot \varepsilon^a,\]
from which one can deduce, as further explained on p117 of \cite{AM2023}, that
\[\frac{\log \mathcal{M}_F(\mathcal{K}, \varepsilon)}{\log \varepsilon} = q_F(\mathcal{K}) + O\biggl( \frac{\log \abs{\log \varepsilon}}{\log \varepsilon}\biggr).\]
This then shows that $q_F(\mathcal{K})$ is an actual limit instead of a $\liminf$, with explicit error term.

\subsection{Polynomial boundedness of constructible functions} \label{subsec: poly bounded}
Recall that an o-minimal structure is polynomially bounded if every definable univariate function is eventually bounded by a polynomial for sufficiently large $x$. For example, $\mathbb{R}_{\an}$ is polynomially bounded, whereas $\mathbb{R}_{\an, \exp}$ is not. 
\\
In this section, we focus on polynomially bounded classes of functions, rather than o-minimal structures, as even if $\K = \Q$, $\R_{\an, \exp}$, which is the smallest o-minimal structure containing $\mathcal{C}$, is not polynomially bounded. Within this context, we consider a stronger notion of polynomial boundedness: we require that for each multivariate function $f(x)$ in the class, there exists a univariate polynomial $P(\Vert x \Vert)$ that bounds $f(x)$ for all $x$. To see that the multivariate notion is essentially stronger, one can look at the class of continuous log-analytic functions; this class is polynomially bounded in the unary case, but do not remain so in the multivariate setting, as shown in \cite{K23}. 
\\
As an almost immediate consequence of Theorem \ref{th:main}, we obtain that continuous power-constructible functions are polynomially bounded in this stronger sense, as shown in the following corollary. While this statement may seem evident, it is not easily proven directly without the framework developed in this work. Note that the multivariate case does not follow immediately from the unary case. Moreover, continuity is indeed a necessary condition, as demonstrated by the function $f:\mathbb{R}^2 \to \mathbb{R}:(x,y) \mapsto \frac{1}{x-y} \cdot \mathds{1}_{x \neq y}$. Even when restricted to sufficiently large values of $\norm{(x,y)}$, $f$ fails to be polynomially bounded.
\begin{corollary}\label{cor:poly_bdd}
	Continuous power-constructible functions are polynomially bounded: for each continuous $f \in \cC^{\mathbb{K}}(\R^n)$ there exists a univariate polynomial $P(r) \in \R[r]$ such that
	\[ \abs{f(x)} \le P(\norm{x}) \]
	for all $x \in \R^n$.
\end{corollary}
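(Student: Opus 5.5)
The idea is to reduce to a one-parameter statement and then apply the main theorem together with Corollary~\ref{cor:CA-almost}. For $f\in\cC^{\K}(\R^n)$ continuous, I define
\[ Y=\{(r,x)\in \R_{>0}\times\R^n \mid \norm{x}\le r\}, \qquad F(r,x)=f(x). \]
Then $Y$ is subanalytic (even semialgebraic). The closed ball $\{\norm{x}\le r\}$ is compact and $f$ is continuous, so each fiber $F_r$ is bounded. Also $F\in\cC^{\K}(Y)$, since composing the generators with the subanalytic projection $(r,x)\mapsto x$ keeps us inside the class. Put $M(r)=\sup_{\norm{x}\le r}\abs{f(x)}$. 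This function is non-decreasing in $r$, and it dominates $\abs{f(x)}$ whenever $r\ge\norm{x}$.

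The key step applies Theorem~\ref{th:main} to $F$, giving $g_1,\dots,g_k\in\cC^{\K}(\R_{>0})$ with $M(r)\sim\max_i\abs{g_i(r)}$. If $\K\subset\R$, Corollary~\ref{cor:CA-almost} applies directly and yields $M(r)\le C r^{s}(\log r)^l$ for $r$ large, which is at most $C r^{\lceil \abs{s}\rceil+l}$. When $\K\not\subset\R$ that corollary is not stated, so I bound each $\abs{g_i}$ separately. Each $g_i$ is a finite sum of products of subanalytic functions, logarithms of positive subanalytic functions, and complex powers $h^{\lambda}$ with $h>0$ subanalytic. In one variable, $\R_{\an}$ is polynomially bounded, so for $r$ large every subanalytic function of $r$ is eventually of the form $c\,r^{q}u(r)$, with $q\in\Q$ and $u$ a unit tending to a nonzero limit (or is eventually $0$). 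From this, $\abs{h^\lambda}=h^{\Re\lambda}$ is eventually polynomially bounded, and so is $\abs{\log h}\le \abs{q}\log r+O(1)$. Hence each $\abs{g_i(r)}\le C_i r^{A_i}$ for $r\ge R_0$.

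It remains to cover small radii and produce a genuine polynomial. For $r\le R_0$, continuity of $f$ on the compact ball $\{\norm{x}\le R_0\}$ gives a constant bound $D$. Combining the two ranges, I take $P(r)=D+C' r^{A}$ with $A$ a nonnegative integer at least every $A_i$ and $C'$ absorbing the implicit constant of $\sim$. Then for any $x$, setting $r=\norm{x}$ gives $\abs{f(x)}\le M(\norm{x})\le P(\norm{x})$. At $x=0$ the fiber is $r=0\notin\R_{>0}$, but that point is already covered by $D$.

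I expect the main obstacle to be the justification that one-variable power-constructible functions are eventually polynomially bounded in the complex-exponent case. This needs the preparation of one-variable subanalytic functions at $+\infty$, obtained from Theorem~\ref{th:prep_subanalytic} after substituting $r\mapsto 1/r$, plus $\abs{h^{\lambda}}=h^{\Re\lambda}$. Everything else is routine. The essential content, passing from the multivariate $f$ to the univariate $M$, is supplied by Theorem~\ref{th:main}, which guarantees that $M$ is comparable to power-constructible functions rather than something outside the class.
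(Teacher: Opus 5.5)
Your proof is correct and follows essentially the paper's route. You apply Theorem~\ref{th:main} to a one-parameter family (suprema over the balls $\{\norm{x}\le r\}$, where the paper instead uses $g(r,y)=f(ry)$ over the unit sphere), bound the resulting univariate power-constructible functions at infinity via polynomial boundedness of $\R_{\an}$ and $\abs{h^\lambda}=h^{\Re\lambda}$, and handle small radii by continuity on a compact ball; your explicit univariate argument just fills in what the paper calls the ``essentially immediate'' case $n=1$.
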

\begin{proof}
	Note that the case $n = 1$ is essentially immediate from the fact that the o-minimal structure $\R_\an$ is polynomially bounded.
	So assume that $n \ge 2$ and define
	\[ g \colon \R \times \R^n \to \R \colon (r,y) \mapsto f(r \cdot y)  .\]
	Then $g$ is also of class $\cC^{\mathbb{K}}$ and satisfies $f(x) = g(\norm{x},x/\norm{x})$ whenever $x \neq 0$. 
	Moreover, by continuity of $f$, $\sup_{\norm{y} =1} \abs{g(r, y)}$ is finite, for all $r \in \R_{>0}$.
	From Theorem~\ref{th:main} we deduce the existence of constructible $g_1,\ldots,g_m  \colon \R \to \R$ such that
	\[ \sup_{y}\abs{g(r,y)} \le \abs{g_1(r)} + \dots + \abs{g_m(r)}  . \]
	Since the power-constructible univariate functions $g_i$ are continuous at infinity, we may invoke the case $n = 1$ to obtain that each of them is polynomially bounded at infinity. 
	Hence, $f$ is also polynomially bounded at infinity, and hence everywhere, by continuity.
\end{proof}

\subsection{Application to distributions}
In~\cite[Def.\,5.3, Sec.\,8.2]{ACRS24}, the authors introduce $\cC^{\exp}$-class distributions and ask whether they are tempered.
We give partial evidence towards a positive answer: from Corollary~\ref{cor:poly_bdd}, one almost immediately obtains that a certain subclass of the $\cCexp$-class is tempered.

We give a precise definition of this subclass, called the \emph{strict} $\cC$-class below. 
First, recall that the space of continuous functions $\cC^0(\R^n)$ injects into the space of distributions $\mathscr{D}'(\R^n)$.
For any $f \in C^{0}(\R^n)$, we denote by $\partial_i f \in \mathscr{D}'(\R^n)$ its distributional derivative with respect to the $i$-th coordinate.
Given $i \in \N^n$, write $\partial_i \coloneqq \partial_{i_1} \circ \ldots \circ \partial_{i_n}$.

\begin{definition}
	A distribution $u \in \mathscr{D}'(\R^n)$ is of \emph{strict $\cC^{\mathbb{K}}$-class}, if there exists some continuous power-constructible $f \in \cC^{\mathbb{K}}(\R^n)$ along with some $i \in \N^n$ such that
	\[ \partial_{i} f = u. \]
\end{definition} 

\begin{lemma}
	Any distribution of strict $\cC$-class is of $\cC^{\exp}$-class.
\end{lemma}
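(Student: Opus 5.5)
The plan is to unwind the definition of $\cC^{\exp}$-class distributions from \cite[Def.\,5.3]{ACRS24}. We then check that a distribution $u=\partial_i f$, with $f\in\cC(\R^n)$ continuous, fits that definition.

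The first step is the base case $i=0$. Recall that in \cite{ACRS24} a distribution is of $\cC^{\exp}$-class if, roughly, its pairings with a definable family of Schwartz (or test) functions are given by a $\cC^{\exp}$-function of the parameters. For instance, one can pair against the $\cC^{\exp}$-family of test functions used there, such as Gaussians or bump functions with definable parameters, or Fourier characters $e^{i\langle \xi,\cdot\rangle}$ times such bumps. In this case $u$ is given by the locally integrable function $f$, so the pairing
\[\langle f,\varphi_\lambda\rangle=\int_{\R^n} f(x)\varphi_\lambda(x)\,dx\]
is a parametric integral of a product of a constructible function with a $\cC^{\exp}$-function of $(x,\lambda)$. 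Since $\cC\subset\cC^{\exp}$ and $\cC^{\exp}$ is stable under products and parametric integration (Cluckers--Comte--Miller--Rolin--Servi), this pairing is of $\cC^{\exp}$-class in $\lambda$. Local integrability, which is needed for the integral to make sense, comes from continuity of $f$. If the definition in \cite{ACRS24} requires integrability for all parameters, I would first multiply by the compactly supported test function, so the integrand is integrable; Corollary~\ref{cor:poly_bdd} would handle the Schwartz-type case.

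The second step handles derivatives. I would use the definition of distributional derivative, $\langle\partial_i f,\varphi_\lambda\rangle=(-1)^{\abs{i}}\langle f,\partial_i\varphi_\lambda\rangle$. It then suffices to observe that if $\lambda\mapsto\varphi_\lambda$ is an admissible $\cC^{\exp}$-family, so is $\lambda\mapsto\partial_i\varphi_\lambda$. This holds because derivatives of functions in the family are again of the same explicit form: polynomials times Gaussians or exponentials with definable coefficients. Alternatively, one can use that the class of $\cC^{\exp}$-distributions in \cite{ACRS24} is shown there to be closed under differentiation, in which case the step is immediate.

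I expect the main obstacle to be matching the precise definition in \cite{ACRS24}, in particular which test families are allowed and whether integrability holds uniformly. This is why continuity of $f$ is assumed: it gives local integrability, and the polynomial bound of Corollary~\ref{cor:poly_bdd} covers rapidly decreasing test functions.
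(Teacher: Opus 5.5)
Your proposal follows the paper's proof: there one takes a mother wavelet $\Psi$ as in \cite[Def.\,4.1, 4.3]{ACRS24} and rewrites $u(\Psi(\lambda(x-\cdot)))$, via the definition of the distributional derivative, as $(-1)^{\abs{i}}\int_{\R^n}\partial_i\bigl[\Psi(\lambda(x-y))\bigr]f(y)\,dy$ (derivative in $y$); it then concludes by stability of $\cCexp$ under differentiation and parametric integration \cite[Thm.\,2.12]{CCMRS18}, which are exactly your two steps. Two remarks: Gaussians and smooth bump functions are not of $\cCexp$-class (the test family is the dilated and translated $\cCexp$ mother wavelet), so your derivative step should rest on closure of $\cCexp$-functions under differentiation rather than on an explicit form of the family; and your use of Corollary~\ref{cor:poly_bdd} to make the integral converge against a non-compactly supported wavelet makes explicit a point the paper leaves implicit.
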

\begin{proof}
	Let $f \in \cC(\R^n)$ be continuous, let $i \in \N^n$ be arbitrary, and write $u = \partial_i f$.
	We verify the conditions of~\cite[Def.\,4.3]{ACRS24}. We need to check that for a mother wavelet $\Psi$ (\cite[Def.\,4.1]{ACRS24}) it holds that
	\[ (x,\lambda) \mapsto u(\Psi(\lambda (x - \cdot) ))   \]
	is a function of $\cCexp$-class on $\R^n \times \R_{>0}$.
	By definition of distributional derivatives, the above function is of the form
	\[ \int_{y \in \R^n} (-1)^{\abs{i}} (\partial_i \Psi(\lambda (x - y))) f(y) \, dy.  \]
	The claim now follows from the fact that $\cCexp$-functions are stable under differentiation and parametric integration (see~\cite[Thm.\,2.12]{CCMRS18}).
\end{proof}

\begin{corollary}
	All distributions on $\R^n$ of strict $\cC$-class are tempered.
\end{corollary}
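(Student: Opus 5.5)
The approach is to show that any distribution $u$ of strict $\cC$-class has finite order and is controlled by a polynomial weight; that is the standard characterization of tempered distributions. By definition, $u=\partial_i f$ for some continuous $f\in\cC(\R^n)$ and some $i\in\N^n$. Since $\mathscr{S}'(\R^n)$ is stable under distributional differentiation (for $\phi\in\mathscr{S}$ we have $\partial_i u(\phi)=(-1)^{\abs{i}}u(\partial_i\phi)$ and $\partial_i$ is continuous on $\mathscr{S}$), it suffices to show that the distribution $T_f\colon\phi\mapsto\int f\phi$ extends continuously to $\mathscr{S}(\R^n)$.

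The key input is Corollary~\ref{cor:poly_bdd}, applied with $\K=\Q$ (here $\cC=\cC^{\Q}$). It gives a univariate polynomial $P$ with $\abs{f(x)}\le P(\norm{x})$ for all $x\in\R^n$. Enlarging $P$ if necessary, we may assume $\abs{f(x)}\le D(1+\norm{x})^{k}$ for some $D>0$ and $k\in\N$.

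For $\phi\in\mathscr{S}(\R^n)$ we then estimate
\[\abs{T_f(\phi)}\le D\int_{\R^n}(1+\norm{x})^{k}\abs{\phi(x)}\,dx\le D\Bigl(\int_{\R^n}(1+\norm{x})^{-n-1}\,dx\Bigr)\sup_{x}(1+\norm{x})^{k+n+1}\abs{\phi(x)}.\]
The integral on the right is finite and the supremum is a Schwartz seminorm. Hence $T_f$ is well-defined on $\mathscr{S}$ and continuous, so $T_f\in\mathscr{S}'$. Applying $\partial_i$ gives $u\in\mathscr{S}'$. Its restriction to $C_c^\infty$ agrees with the original $u$, because the distributional derivative on $\mathscr{D}'$ and on $\mathscr{S}'$ are defined by the same formula.

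There is no real obstacle. All the difficulty sits in Corollary~\ref{cor:poly_bdd}, which requires the multivariate polynomial bound and hence Theorem~\ref{th:main}. The only point to check is that the continuity hypothesis in the definition of strict class is exactly what that corollary needs.
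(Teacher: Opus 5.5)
Your proof is correct and follows the same route as the paper: you reduce to the continuous function $f$ using stability of tempered distributions under differentiation, then invoke Corollary~\ref{cor:poly_bdd} for a polynomial bound on $\abs{f}$. You additionally write out the standard estimate showing that a polynomially bounded continuous function defines an element of $\mathscr{S}'(\R^n)$, which the paper leaves implicit.
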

\begin{proof}
	As distributional derivatives of tempered distributions are tempered (\cite[Thm.\,7.13]{Rud91}), it suffices to show that each continuous $f \in \cC(\R^n)$ determines a tempered distribution.
	It suffices to show that $\abs{f(x)}$ is bounded above by a polynomial, but this follows from Corollary~\ref{cor:poly_bdd}.
\end{proof}

One can similarly define a notion of distributions of strict $\cCexp$-class, as distributional derivatives of continuous $\cCexp$-functions.
As in the proof above, temperedness of the $\cCexp$-class would follow from combining a polynomial boundedness for $\cCexp$-functions with a proof that all $\cCexp$-class distributions are strict.
Both of these questions are open.


\subsection{A uniform bound for pushforward measures}

As mentioned in~\cite[Rem.\,6.11]{GHS24}, our main Theorem~\ref{th:main} allows for an improvement of Theorem 6.10 in loc. cit., making the exponent in the bound independent of the measures under consideration.
We recall some of the definitions and indicate how the proof changes.

Let $X,Y$ be two smooth $\R$-varieties and let $\varphi \colon X \to Y$ be a jet-flat map. 
Equivalently, $\varphi$ is flat with fibers of semi-log canonical singularities (cf.~\cite[Lem.\,6.5]{GHS24}).
Let $\mu$ be a smooth and compactly supported measure on $X(\R)$. Here, smooth means that for every chart $\psi \colon U \subset X(\R) \to \R^n$, the pushforward measure $\psi_*\mu$ has smooth density with respect to the Lebesgue measure. 
Also take any smooth and non-vanishing measure $\tau$ on $Y(\R)$.

\begin{corollary}\label{corol: pushforward measures}
	There exists a constant $M_\varphi$, only depending on $\varphi$ (and not on $\mu,\tau$) such that for every $p \in \N$ there exists a constant $C_{\varphi,\mu,\tau,p}$ such that for $0<r<1/2$ one has
	\[ \int_{Y(\R)} \left( \frac{(\varphi_*\mu)(B(y,r))}{\tau(B(y,r))} \right)^p \, d\tau(y) < C_{\varphi,\mu,\tau,p} \abs{\log(r)}^{p M_\varphi}. \]
\end{corollary}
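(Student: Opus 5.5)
The idea is to follow the proof of Theorem 6.10 in \cite{GHS24} and replace only the step where the supremum over the measures enters by an application of Theorem~\ref{th:main}. That step is what makes their exponent depend on $\mu$ and $\tau$.

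\textbf{Step 1: reduce to charts and a density function.} Using partitions of unity on $X(\R)$ and $Y(\R)$, together with semi-algebraic charts, we reduce to $X$, $Y$ open in $\R^n$, $\R^m$ and $\varphi$ a polynomial (hence subanalytic) map. After these reductions, $\mu$ has a smooth compactly supported density $\rho$ bounded by $\norm{\rho}_\infty$, and $\tau$ has a density bounded above and below on compacts. Then
\[(\varphi_*\mu)(B(y,r)) \le \norm{\rho}_\infty\, \mathrm{Vol}\bigl(\varphi^{-1}(B(y,r))\cap K\bigr)\]
for a compact $K$ containing the support, and $\tau(B(y,r))\sim r^m$. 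It therefore suffices to bound the quantity
\[F(y,r)=\frac{\mathrm{Vol}_n(\varphi^{-1}(B(y,r))\cap K)}{r^m}.\]
By parametric integration, $F$ is a constructible function of $(y,r)$ in the class $\cC$, and it depends only on $\varphi$ and $K$. The dependence on $\mu$ and $\tau$ has now been moved entirely into constants, apart from the choice of compact $K$. We fix $K$ as a large ball, so that $F$ depends on $\varphi$ alone; this costs only constants.

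\textbf{Step 2: the key uniform bound.} \cite{GHS24} shows that, for jet-flat $\varphi$, $F(y,r)$ is bounded for each fixed $r$, and in fact $F(y,r)\lesssim \abs{\log r}^{M}$ for some $M$ depending on the data. Uniformity is the point here. We apply Theorem~\ref{th:main} to $F$ on $\{(r,y): 0<r<1/2,\ y\in \varphi(K)^{+1}\}$, with $r$ as the parameter and $y$ as the supremum variable. This gives constructible $g_1,\dots,g_k$ of $r$ with
\[\sup_y F(y,r)\sim \max_i\abs{g_i(r)}.\]
We then apply Corollary~\ref{conj cluckers adiceam} (equivalently, Corollary~\ref{cor:CA-almost} after $r\mapsto 1/r$) to obtain
\[\sup_y F(y,r)\sim r^{a}\abs{\log r}^{l}\]
for small $r$. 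The exponents $a$ and $l$ come only from $\varphi$ (and the fixed $K$). The known boundedness result, namely that jet-flatness gives at most logarithmic growth, forces $a\ge 0$; for $a=0$ we set $M_\varphi=l$, and for $a>0$ we set $M_\varphi=0$. On $[r_0,1/2)$ compactness and boundedness give the estimate trivially.

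\textbf{Step 3: integrate.} Since the integrand is supported in a bounded set depending on $\mathrm{supp}\,\mu$, we obtain
\[\int (\cdots)^p\,d\tau \le \tau(\text{supp})\,\bigl(C\sup_y F\bigr)^p\le C_{\varphi,\mu,\tau,p}\abs{\log r}^{pM_\varphi}.\]

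The main obstacle is Step 1's claim that the sup-function genuinely depends only on $\varphi$, with $\mu$ and $\tau$ entering only through constants. If the relevant estimate in \cite{GHS24} is stated pointwise with $\mu$-dependent exponents, I would instead apply Theorem~\ref{th:main} to the whole family of local models indexed by the finitely many charts. I expect to need the $\sim$ lower bound to rule out negative $a$, using the non-uniform bound from \cite{GHS24} for one fixed $\mu$ with positive density.
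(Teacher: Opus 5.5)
There is a genuine gap, and it sits exactly where the corollary has content: the independence of the exponent from $\mu$.

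In Step 1 you need $K\supseteq\operatorname{supp}\mu$, and you then ``fix $K$ as a large ball, so that $F$ depends on $\varphi$ alone''. Unless $X(\R)$ is compact (it is not, e.g., for affine $X$), no single compact set contains the supports of all smooth compactly supported $\mu$. So $K$, hence $F=F_K$, depends on $\mu$, and so do the exponents $(a,l)$ that Corollary~\ref{cor:CA-almost} attaches to $r\mapsto\sup_yF_K(y,r)$. Your Steps 2--3 therefore only reprove the non-uniform bound, with an exponent depending on $K$. Nothing in your argument prevents these exponents from growing without bound as $K$ exhausts $X(\R)$. Your fallback with ``finitely many charts'' has the same defect, since the compact pieces you need still grow with $\operatorname{supp}\mu$. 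Moreover, Corollary~\ref{cor:CA-almost} is a one-parameter statement and gives no information on how its exponents vary in a family.

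The missing idea is to put the family of measures into the parameter space of a single constructible function, as the paper does:
\begin{enumerate}
\item Reduce to $X$ affine, $Y=\mathbb{A}^m_\R$ and $\tau$ Lebesgue.
\item Fix a non-vanishing smooth $\tilde\mu$ on $X$, and note that every $\mu$ is dominated by $C_\mu\,\mu_l$ with $\mu_l=\tilde\mu|_{B(0,l)\cap X(\R)}$ for some $l$.
\item Consider $G(y,l,r)=(\varphi_*\mu_l)(B(y,r))/r^m$. It is constructible in $(y,l,r)$, non-negative and fiberwise bounded.
\item Apply Corollary~\ref{cor:fpos} (or the ``moreover'' part of Theorem~\ref{th:main}, which makes the $g_i$ non-negative). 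This gives one constructible $g(l,r)\sim\sup_yG(y,l,r)$ depending only on $\varphi$ and the fixed $\tilde\mu$. You cannot instead take absolute values as in the proof of Corollary~\ref{cor:CA-almost}, because $\abs{g_i}$ need not be constructible in two variables.
\end{enumerate}
One then argues as in \cite[Prop.\,6.8]{GHS24}. Concretely, write $g=\sum_ic_i\prod_{j\le J}\log c_{ij}$ with subanalytic $c_i,c_{ij}$. For each fixed $l$, the Puiseux expansions at $r=0^+$ give $g(l,r)=\sum_{k\le J}h_k(r)(\log r)^k$ with subanalytic germs $h_k$. The non-uniform bound for $\mu_l$ (the same input you use to get $a\ge 0$) rules out negative leading exponents, so every $h_k$ is bounded and $g(l,r)\le C_l\abs{\log r}^{J}$. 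Since $J$ depends only on $g$, $M_\varphi=J$ is uniform in $l$, hence in $\mu$ and $\tau$, and your Step 3 then yields the factor $\abs{\log r}^{pM_\varphi}$.
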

\begin{proof}[Proof sketch]
	One reduces to the case where $X$ is affine, $Y = \mathbb{A}^m_\R$ and $\tau$ is the Lebesgue measure on $\R^m$. 
	Now take a non-vanishing smooth measure $\tilde{\mu}$ on $X$ (e.g. induced by a non-vanishing top differential form).
	It suffices to prove the bound for measures of the form $\mu_l =\tilde{\mu}\mid_{B(0,l) \cap X(\R)}$. Now define
	\[  G(y,l,r)  = \frac{(\varphi_*\mu_l)(B(y,r))}{r^m},  \]
	and proceed as in the non-archimedean case (\cite[Prop.\,6.8]{GHS24}), now using Theorem~\ref{th:main} instead of the non-archimedean approximate suprema result.
	Note that $G(y,l,r)$ is fiberwise bounded and non-negative, so that Corollary~\ref{cor:fpos} applies.
\end{proof}

\section{Counterexamples}\label{sec:counterexamples}
                                                           
In this section we show that it is not possible to do better than Theorem \ref{th:main_intro} in general.
More precisely, we will show the following theorem.

\begin{theorem}\label{th:counterexamples}
	The following are true:
	
	\begin{enumerate}
		\item\label{it:not_sup_closed} There exists a nonnegative, fiberwise bounded constructible function $f: \R^2 \to \R$ such that $S_f(x)$ is not constructible.
		
		\item\label{it:not_abs} There is no constructible function $f: \R^2 \to \R$ for which there exists a constant $C>0$ such that
		
		$$\frac{\abs{x-\log y}}{C} \le f(x,y) \le C \abs{x-\log y}$$
		
		for all $(x,y) \in \R^2$.
		
		\item\label{it:not_max} There is no constructible function $f: \R^2 \to \R$ for which there exists a constant $C>0$ such that
		
		$$\frac{\abs{x-\log y}+1}{C} \le \abs{f(x,y)} \le C (\abs{x-\log y}+1)$$
		
		for all $(x,y) \in \R^2$.
	\end{enumerate}
\end{theorem}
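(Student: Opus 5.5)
For item~(\ref{it:not_sup_closed}) I will give an explicit example whose supremum is attained at an implicitly defined point. Take
\[ f(x,y)=(x-y)\log y\cdot \mathds{1}_{\{x\ge e,\ 1\le y\le x\}}(x,y). \]
This is constructible, nonnegative, and fiberwise bounded. For $x\ge e$ the maximum over $y$ is attained at the unique $y^*(x)$ with
\[ y^*(\log y^*+1)=x. \]
Hence $S_f(x)=(x-y^*)\log y^*$, where $y^*=x/(\log x-\log\log x+\dots)$ has the Lambert-$W$ type asymptotics.

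To show $S_f$ is not constructible, I will argue as in the proof of Corollary~\ref{cor:CA-almost}. Any univariate constructible function equals, for large $x$, a finite sum $\sum_{r,l} v_{rl}(x)x^r(\log x)^l$ with the $v_{rl}$ subanalytic units, i.e.\ convergent series in $x^{-1/p}$. On the other hand, expanding $S_f(x)$ gives
\[ S_f(x)=x\log x - x\log\log x+O(x). \]
A term $x\log\log x$ cannot be absorbed into such a sum. To see this, subtract $x\log x$ and divide by $x$; a constructible function is then forced to be $\sim \log\log x$, which is impossible because constructible functions are asymptotic to some $x^r(\log x)^l$.

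For items~(\ref{it:not_abs}) and~(\ref{it:not_max}), note first that $x-\log y$ is itself constructible. So the obstruction must come from, respectively, the absolute value and the nonvanishing of $f$. Suppose such an $f$ exists. Apply Proposition~\ref{prop:prep for constructible} to $f$ with $x$ as parameter and $y$ as cell variable. Only finitely many cells occur and their boundaries are subanalytic in $x$, whereas the curve $y=e^x$ is not subanalytic. Hence for large $x$ the point $(x,e^x)$ lies in an unbalanced $l$-normalized cell, which after the $l$-bijection has the form $1<y<a(x)$. In these coordinates $\log a(x)$ and $\log y$ differ from the original ones only by subanalytic (hence at most $O(\log x)$) shifts. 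Moreover, $e^x$ sits deep inside, in the sense that both $x-\log(\text{lower boundary})$ and $\log a(x)-x$ are $\gg \log x$ along a sequence, or else we pass to $C_{N,\varepsilon}$.

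Theorem~\ref{th:PrepUnbalanced power constr} then writes $f=f^-+f^0+f^+$ on $C_{N,\varepsilon}$, where
\[ f^0(x,y)=\sum_{\gamma\le d}A_\gamma(x)(\log y)^\gamma \]
(with $\K=\Q$ there are no oscillatory factors). By the decay bounds of Lemma~\ref{le:uniformUnboundedNeg}, $f^\pm$ decay like $y^{-A}$ or $(a(x)/y)^{-A}$ relative to $\max|f|$ at sampled points. Hence on the window $\log y\in[x-M,x+M]$, with $M$ fixed but large, $f(x,e^s)$ equals a polynomial $P_x(s)$ of degree $\le d$ up to an error $O(e^{-c(x-M)})\cdot(\text{size})$. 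The size is controlled by Lemma~\ref{le:uniformUnboundedZeroconstr}, which gives the norm equivalence used to compare $\sup|P_x|$ with sampled values.

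Now rescale $s=x+Mt$, $t\in[-1,1]$. The hypothesis gives
\[ |P_x(x+Mt)|\asymp M|t|+1 \quad\text{(resp. }\asymp M|t|\text{)}. \]
Dividing by $M$, the family of degree-$\le d$ polynomials $Q_{x,M}(t)=P_x(x+Mt)/M$ is bounded in norm, so by compactness it has a limit $Q$ as $M\to\infty$ along suitable $x$. This limit satisfies $|t|/C\le |Q(t)|\le C|t|$ on $[-1,1]$. In case~(\ref{it:not_abs}), $Q\ge0$ and $Q$ vanishes only at $0$ to first order, which forces $Q'(0)\ne0$ and hence a sign change; this is a contradiction. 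In case~(\ref{it:not_max}), the same limit shows $Q$ changes sign at $0$ (its real zeros have odd order, and the comparison with $|t|$ forces order one). So for large $M$ and $x$ the real continuous function $s\mapsto f(x,e^s)$ changes sign on the window, and by the intermediate value theorem it vanishes. This contradicts $|f|\ge 1/C$.

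The main obstacle is making the error from $f^\pm$ genuinely negligible uniformly in $x$ on a window of size $M$ around $\log y=x$, which requires that point to stay far from the cell ends. I would handle this by shrinking to $C_{N,\varepsilon}$ and taking $x\to\infty$ with $M$ fixed, so that the distance to the ends grows faster than any fixed $M$.
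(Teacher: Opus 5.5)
Your item (1) is fine and is essentially the paper's argument. The paper uses $x\log y-y\log x+x^2$ with explicit maximizer $y=x/\log x$, while your maximizer is implicit; in both cases one isolates an $x\log\log x$ term and uses that a univariate constructible function is $\sim x^r(\log x)^l$ at infinity.

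For items (2) and (3) there is a genuine gap in the step producing $f=f^-+f^0+f^+$. Cell boundaries are subanalytic, hence polynomially bounded, and every subanalytic function of $x$ is eventually smaller than $e^x$. So the cell containing $(x,e^x)$ for large $x$ is the top cell, and after the $l$-bijection its upper boundary is $a=\infty$. But Theorem~\ref{th:PrepUnbalanced power constr} assumes $f$ is fiberwise bounded, and here $\abs{f(x,y)}\ge\abs{x-\log y}/C$ is unbounded in $y$. Worse, for $a=\infty$ that theorem concludes that $f^0=A_{000}(x)$ is independent of $y$, since the $(\log y)^\gamma$ terms with $\gamma\ge1$ are killed exactly by fiberwise boundedness. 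That conclusion would contradict your hypothesis outright, which shows the theorem is being used outside its scope. The form $f^0=\sum_{\gamma\le d}A_\gamma(x)(\log y)^\gamma$ you rely on is therefore not supplied by anything in the paper.

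Your error control also cites the wrong tool. Lemma~\ref{le:uniformUnboundedNeg} bounds $f^-$ on the window only relative to $\max_j\abs{f^-(x,2Nd_j)}$, i.e.\ values near the bottom of the cell. To beat the $e^{-cx}$ decay you need a bound on these values that is polynomial in $x$, and Lemma~\ref{le:uniformUnboundedZeroconstr} says nothing about that.

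Both points can be repaired by doing the expansion on the top cell by hand.
\begin{enumerate}
\item There $\psi=(x,x_{>l}^{q_1}y^{-1})$. Expand each $\psi$-function in $U=x_{>l}^{q_1}y^{-1}$ near $U=0$, uniformly over the base by compactness.
\item For large $y$ this gives $f=\sum_{j\ge0,\gamma}B_{j\gamma}(x)y^j(\log y)^\gamma+R$, with finitely many terms and $\abs{R}\le c\,x^k y^{-1}(\log y)^d$. Here $x$ is the original variable; the factors $x_{>l}^r(\log x_{>l})^s$ are polynomially bounded in it.
\item For fixed $x$ the hypothesis gives $f(x,y)=O(\log y)$ as $y\to\infty$. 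This forces $B_{j\gamma}=0$ for $j>0$, and for $j=0,\gamma\ge2$.
\item Note that $\log y_{\mathrm{orig}}=n\log y+\log b(x)$ with $\log b(x)=O(\log x)$; there is a slope $n$ from the substitution $y=y'^n$, not just a shift. Hence on the window $\log y_{\mathrm{orig}}\in[x-M,x+M]$ you get $f=A_0(x)+A_1(x)\log y_{\mathrm{orig}}+o(1)$ as $x\to\infty$ with $M$ fixed.
\end{enumerate}
So $P_x$ is actually affine, and the sign-change contradiction is immediate; your compactness argument also works.

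With this repair your route is valid, but it differs from the paper's, which never uses Section~\ref{sec:preparation results}. The paper's Lemma~\ref{le:prepCountEx} writes $f=\sum_i(x-\log y)^iU_i$ via Euclidean division by $x-T$. Each $U_i$ is a sum of special units times monomials $x^ry^\beta(\log x)^s$. For (2) it deduces $U_0\equiv0$ from $U_0(x,e^x)=0$ by distinct growth rates, then uses a sign change of the continuous cofactor. For (3) it computes $\lim_{x\to\infty}$ of $U_0$ and of the cofactor along $y=Ke^x$, then lets $K\to0$ and $K\to\infty$. This is the same idea as your window, with $\log K$ playing the role of $Mt$, but it needs only elementary preparation and no uniform decay estimates.
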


Note that the first point implies that the set of nonnegative, fiberwise bounded constructible functions is not closed under taking parametric suprema, even though it is fully closed under taking approximate parametric suprema by Theorem \ref{th:main_intro}.
The second point implies that there does not exist a constructible function $f$ such that, for $g: \R^3 \to \R: (x,y,z) \to x-\log y$, $f \sim S_g(x,y)$.
The third point implies that there does not exist a constructible function $f$ such that $|f| \sim S_g(x,y)$ for

$$g: \R^3 \to \R: (x,y,z) \mapsto \begin{cases}
	x-\log y & z\ge 0\\
	 1 & z<0
\end{cases}.$$
In other words, in Theorem \ref{th:main_intro}, we cannot leave out the absolute values inside the maximum and we also cannot leave out the maximum itself.

\begin{remark}
	In \cite{K23}, Kaiser provides an example of a constructible function $f(x,y)$ such that $\sup_{y>0} f(x,y)$ is not constructible.
	Notably, that function $f$ is not fiberwise bounded, and hence $\sup_{y>0}\abs{f(x,y)} = + \infty$.
	In particular, that example does not prove item~(\ref{it:not_sup_closed}) of Theorem~\ref{th:counterexamples}. 
\end{remark}

The following lemma will be crucial for the proof of items~(\ref{it:not_abs}) and (\ref{it:not_max}) of Theorem~\ref{th:counterexamples}.

\begin{lemma}\label{le:prepCountEx}
	Let $f: \R^2 \to \R$ be constructible.
	Then there exist a constant $c$ and a subanalytic analytic map $a: (c, +\infty) \to \R$ such that on the cell
	$$C=\{(x,y) \in (c, +\infty) \times \R \mid y>a(x)\},$$
	we get that
	$$f(x,y)=(x-\log y)^k U_k(x,y)+(x- \log y)^{k-1} U_{k-1}(x,y)+...+U_0(x,y),$$
	where all $U_i$ are constructible.
	Moreover, for each $U_i$ there are a finite set $K_i \subset \Q \times \N \times \Q$ (for which the projection $K_i \to \N$ is injective) and special units $V_{rs\beta}$ such that
	$$U_i(x,y)=\sum_{(r,s, \beta) \in K_i} V_{rs\beta}(x,y) x^r y^\beta (\log x)^s.$$
\end{lemma}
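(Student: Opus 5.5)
We take $f(x,y)=\sum_j c_j(x,y)\prod_m \log c_{j,m}(x,y)$ with $c_j$ subanalytic and $c_{j,m}>0$ subanalytic, and prepare all these functions on a cell unbounded in both directions. First we apply simultaneous tight preparation (Theorem~\ref{th:prep_subanalytic}) with respect to $y$ to the finitely many functions $c_j,c_{j,m}$. Among the resulting cells over $\R$, exactly one is of the form $\{(x,y): x\in I,\ y>a_0(x)\}$ with $I$ unbounded to the right. The center $\theta$ on that cell is a subanalytic function of $x$ with $\theta(x)<y$. Since $y\to\infty$ on the cell while $\theta$ is fixed in $x$, after enlarging the lower boundary to $a(x)=\max(a_0(x),2\abs{\theta(x)}+1)$ we may assume $y\sim y-\theta(x)$. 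Then $\abs{y-\theta(x)}^{q}=y^{q}(1-\theta(x)/y)^{q}$, and $(1-\theta/y)^q$ is a unit in the variable $\theta(x)y^{-1}$, which is bounded. So on this cell every $c_j$ and $c_{j,m}$ has the form $d(x)\,y^{\beta}\,V(x,y)$ with $\beta\in\Q$ and $V$ a special unit (a $\varphi$-unit with $\varphi=(A(x),B(x)y^{-1/p},C(x)y^{1/p})$, where the last component is absent or bounded, hence can be absorbed). Moreover $c=\inf I$.

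Next we prepare in $x$. We apply the univariate preparation (Theorem~\ref{th:prep_subanalytic} with $n=0$, or simply polynomial boundedness and Puiseux expansions at $+\infty$) to the finitely many subanalytic functions of $x$ occurring: the $d(x)$ and the components $A,B,C$ of $\varphi$. For large $x$ each becomes $x^{r}u(x)$ with $u$ a unit in $x^{-1/p}$; increasing $c$ costs nothing. Taking logarithms gives
\[
\log c_{j,m}(x,y)=\log d(x)+\beta\log y+\log V(x,y)=r\log x+\beta\log y+\log u(x)+\log V(x,y),
\]
where $\log u$ and $\log V$ are again analytic functions of the bounded data, hence special-unit-type functions, possibly plus a constant. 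Now we substitute $\log y=x-(x-\log y)$. Each $\log c_{j,m}$ becomes a polynomial of degree at most one in $(x-\log y)$, with coefficients of the form $\beta x+r\log x+(\text{bounded analytic data})$. Expanding the product $\prod_m\log c_{j,m}$ and multiplying by $c_j=d_j(x)y^{\beta_j}V_j$ yields $f=\sum_i (x-\log y)^i U_i$. Each $U_i$ is a finite sum of terms $x^{r}y^{\beta}(\log x)^s\,W(x,y)$, with $W$ a $\varphi$-function, i.e.\ an analytic function of bounded arguments. These $U_i$ are clearly constructible.

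The remaining step, and the main obstacle, is the required shape: special \emph{units} $V_{rs\beta}$ and injectivity of $K_i\to\N$, i.e.\ for each power $s$ of $\log x$ a single monomial $x^r y^\beta$. The $W$ are only $\varphi$-functions, not units, and several monomials share a given $s$. We will handle this as in the proof of Theorem~\ref{th:PrepUnbalanced power constr}. For fixed $i$ and $s$, group all terms $\sum x^{r}y^{\beta}W_{r\beta}(x,y)$. After a power substitution $x=x'^{p}$ making exponents integral, we write this group as an analytic function $G$ of $(x^{-1/p},\, B(x)y^{-1/p},\ldots)$ times one common monomial. We then apply the Weierstrass-type Lemma~\ref{veralgemening 4.12 Denef} in the $y$-variable, combined with a one-variable Weierstrass/tight preparation in $x$, to factor the group as a single monomial times a unit. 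Equivalently, the whole group is a subanalytic function of $(x,y)$ in $\R_{\an}$. Applying simultaneous tight preparation once more to these finitely many subanalytic sums, as in the proof of Corollary~\ref{cor:CA-almost} (with $y\sim y-\theta$ and then $x$ prepared), puts each such sum in the form $V(x,y)x^{r}y^{\beta}$ with $V$ a special unit, after shrinking the cell again. Finitely many of these refinements produce one unbounded cell $\{x>c,\ y>a(x)\}$, and grouping by $s$ gives injectivity of $K_i\to\N$.
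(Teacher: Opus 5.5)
Your proposal is correct and follows essentially the same route as the paper. You prepare the subanalytic data first in $y$ and then in $x$ on the unique cell $\{x>c,\ y>a(x)\}$, rewrite $\log y=x-(x-\log y)$ (the paper does this as Euclidean division of $P(T)=\sum_{sj}g_{sj}(\log x)^sT^j$ by $x-T$), collect the coefficient of each $(x-\log y)^i(\log x)^s$ into a subanalytic function $h_{is}$, and apply simultaneous tight preparation once more to the $h_{is}$. This last step is what yields a single monomial times a special unit for each $s$.

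Two small remarks. By tightness the center on that top cell is automatically $\theta=0$, so enlarging $a(x)$ is unnecessary. You should also drop the detour through a power substitution and Lemma~\ref{veralgemening 4.12 Denef}: that lemma on its own only yields a sum of monomial-times-unit terms, not a single one, and it is the re-preparation of the $h_{is}$ that actually does the job.
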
 

\begin{proof}
	Suppose that
	$$f(x,y)=\sum_i f_i(x,y) \prod_j \log \abs{f_{ij}(x,y)}.$$
	We use simultaneous tight preparation on all the $f_i$ and $f_{ij}$.
	Next, for every cell $D$ in the resulting cell decomposition, if $f_i(x,y)=a_i(x) \abs{y-\theta(x)}^{q_i} M_i(x,y)$ and $f_{ij}(x,y)=a_{ij}(x) \abs{y-\theta(x)}^{q_{ij}} M_{ij}(x,y)$ are the prepared forms of $f_i$ and $f_{ij}$ on $D$, we use simultaneous tight preparation on all the $a_i$ and $a_{ij}$.
	
	After rewriting everything, on a fixed cell $D$ we get that $f$ is of the form
	$$f(x,y)=\sum_{sj} g_{s j}(x,y) (\log \abs{x-d})^s (\log\abs{y-\theta(x)})^j,$$
	for subanalytic $g_{s j}$, a constant $d$ and a subanalytic analytic map $\theta$.
	Now, for the remainder of the proof, we work on the unique cell $C'$ in this decomposition such that
	$$C'=\{(x, y) \in (c', +\infty) \times \R \mid x>c', y>a'(x)\},$$
	for a constant $c'$ and a subanalytic analytic map $a': (c', +\infty) \to \R$.
	By tightness of preparation, on this cell we get $d=\theta=0$.
	Hence
	$$f(x,y)=\sum_{sj} g_{s j}(x,y) (\log x)^s (\log y)^j.$$
	
	Now consider the polynomial $P \in \mathcal{C}(C)[T]$ given by
	$$P(T)=\sum_{sj} g_{sj}(x,y) (\log x)^s T^j.$$
	Using repeated Euclidean division of $P$ by $x-T$ gives us constructible functions $U_i(x,y)$ such that
	$$P(T)=(x-T)^d U_d(x,y)+...+U_0(x,y).$$
	Hence we get
	$$f(x,y)=(x-\log y)^d U_d(x,y)+...+U_0(x,y),$$
	where
	$$U_i(x,y)=\sum_s h_{is}(x,y) (\log x)^s,$$
	for subanalytic $h_{is}$.
	
	Next, we use simultaneous tight preparation on all the $h_{is}$; similar to before, first with respect to the $y$-variable and then with respect to the $x$-variable.
	Let $C$ be the unique cell in this preparation which is of the required form.
	Then by tightness of the preparation, we have written $f$ in the required form.
\end{proof}

\begin{proof}[Proof of Theorem~\ref{th:counterexamples}]

		(\ref{it:not_sup_closed}) Consider the function
		\[ f(x,y) = \begin{cases}
			x \log(y) - y \log(x) + x^2 & \text{if } x > e \text{ and } 1 < y < x, \\
			0							& \text{else}.
		\end{cases} \]
		Note that $f(x,y) \geq 0$ for all $(x,y)$ and that
		\[ \pdv{f}{y} = \frac{x}{y} - \log(x). \]
		Since $\pdv{f}{y} > 0$ for $1 < y < x/\log(x)$ and $\pdv{f}{y} < 0$ for $x/\log(x) < y < x$, it follows that for $x > e$, $f$ attains a unique maximum at $y = x/\log(x)$.
		Hence, for $x > e$
		\[ \sup_y \abs{f(x,y)} = x (\log(x) - \log(\log(x))) - x + x^2. \]
		If the left-hand side were constructible, then $x \log(\log(x))$ would also be constructible on $(e,+\infty)$.
		This is impossible, as any constructible function grows like $x^r \log(x)^l$ at infinity, for some $r \in \Q$ and $l \in \Z$.
	
		(\ref{it:not_abs}) 	Suppose there does exist such a constructible function $f$.
		Using Lemma \ref{le:prepCountEx} (and its notation), we may write
		
		$$f(x,y)=(x-\log y)^k U_k(x,y)+...+U_0(x,y)$$
		
		on a cell $C=\{(x,y) \in (c, +\infty) \times \R \mid y>a(x)\}$.
		By assumption, we get that $U_0(x, \exp x)=0$, meaning that
		
		$$\sum_{(r, \beta, s) \in K_0} V_{0j}(x, \exp x) x^{r} \exp(\beta x) (\log x)^s=0,$$
		
		for all $x$ sufficiently large (where the $V_i$ are special units) (indeed: note that by subanalyticity of $a$, $(x, \exp x) \in C$ for $x$ large enough).
		However, every term in this sum has a different growth rate; in particular, the only way this sum is zero is if $U_0=0$.
		Hence $f(x,y)=(x-\log y) U(x,y)$ for a continuous function $U$.
		But then $U$ is a continuous function, changing sign past the curve $x=\log y$ such that $\abs{U} \ge 1/C$ if $x \neq \log y$, a clear contradiction.
		
		(\ref{it:not_max}) Suppose there does exist such a function $f$.
		Using Lemma \ref{le:prepCountEx}, we may write
		
		$$f(x,y)=(x-\log y)^k U_k(x,y)+...+U_0(x,y)$$
		
		on a cell $C=\{(x,y) \in (c, +\infty) \times \R \mid y>a(x)\}$.
		In particular, $f$ is continuous.
		Since $f \neq 0$ on $C$ by the given inequality, we may assume $f>0$ on all of C

		Note that for any $K>0$, substituting $(x, K \exp x) \in C$ for $x$ sufficiently large.
		Upon substituting $y=K \exp x$ in the given inequality for $f$, we get
		\begin{equation}\label{eq:countExMain}
			\frac{\abs{\log K}+1}{C} \le (- \log K)^k U_k(x, K \exp x)+...+U_0(x, K \exp x) \le C(1+ \abs{\log K}).
		\end{equation}
		Substituting $K=1$ in this equation implies, using notation from Lemma \ref{le:prepCountEx}, that
		$$1/C \le \sum_{(r, s, \beta) \in K_0} V_{rs\beta}(x, \exp x) x^r \exp(\beta x) (\log x)^s \le C.$$
		
		If this is to be true for arbitrarily large $x$, we need $(r,s, \beta) \le (0,0,0)$ in reverse lexicographic order for all $(r, s, \beta) \in K_0$.
		Moreover, we need $(0,0,0) \in K_0$ for $U_0(x, \exp x)$ to be bounded below by a positive constant.
		Now write $V_{rs\beta}(x,y)=W_{rs\beta}(A_{rs\beta}(x), B_{rs\beta}(x) y^{-1/p})$, where $A_{rs\beta}(x), B_{rs\beta}(x)y^{-1/p}$ are subanalytic, analytic and bounded on $C$, and $W_{rs\beta}$ is analytic on the closure of the image of $(A_{rs\beta}(x), B_{rs\beta}(x) y^{-1/p})$ (this is possible by the definition of special unit).
		We then see, using the information on $K_0$, that for all $K$,
		
		\begin{equation}\label{eq:counterExU0}
			\lim_{x \to \infty} U_0(x, K \exp x)=W_{00}(A_{00}, 0):=q,
		\end{equation}
		
		where $A_{00}=\lim_{x \to \infty} A_{00}(x)$.
		
		Next, upon defining
		
		$$U(x,y)=(x- \log y)^{k-1} U_k(x, y)+...+U_1(x, y)$$
		
		and dividing Equation \eqref{eq:countExMain} by $\abs{\log K}$, we find that
		
		\begin{equation}\label{eq:counterEx}
			\frac1C+\frac{1}{C \abs{\log K}} \le \mathrm{sign}(-\log K) U(x, K \exp x)+\frac{U_0(x, K \exp x)}{\abs{\log K}} \le C+\frac{C}{\abs{\log K}}.\end{equation}
		
		Note that from this inequality it follows that $-U(x, \log(\log x) \exp x)$ is bounded above and below by positive constants for large $x$.
		Using the notation from Lemma \ref{le:prepCountEx}, we then get that
		\[-\sum_{i=1}^d (-\log (\log (\log x)))^{i-1} \sum_{(r, s, \beta) \in K_i} V_{is}(x, \log(\log x)\exp x)x^r (\log(\log x))^\beta \exp(\beta x) (\log x)^s\]
		is bounded above and below by positive constants.
		
		Now note that the terms $(\log (\log (\log x)))^{i-1} x^r (\log (\log x))^\beta \exp(\beta x) (\log x)^s$ all have different growth rates.
		If this expression is to be bounded above, then we need $\beta \le 0$ for all $(r, s, \beta) \in K_i$, and moreover if $\beta=0$, we need $r \le 0$.
		If also $r=0$, then necessarily $s=i-1=0$.
		In other words, we need $(i-1, r, s, \beta) \le (0,0,0,0)$ in reverse lexicographic order.
		
		Finally, we see that for $U(x, \log(\log x) \exp x)$ to be bounded below by a positive constant, we need $(0, 0, 0) \in K_1$.
		
		We then get, using the above information on $(i-1, r, s, \beta)$:
		
		\begin{align*}
			\lim_{x \to \infty} U(x, K \exp x)&=\lim_{x \to \infty} \sum_{i=1}^d (-\log K)^{i-1} \sum_{(r, \beta, s) \in K_i} V_{is}(x, K \exp x) x^r K^\beta \exp(\beta x) (\log x)^s\\
			&= \lim_{x \to \infty} W_{10}(A(x), 0)=W_{10}(A, 0):=u,
		\end{align*}
		
		where $A=\lim_{x \to \infty} A(x)$.
		
		Taking the limit as $x \to \infty$ of \eqref{eq:counterEx} then yields (by the above equality and Equation \eqref{eq:counterExU0}) that
		
		$$\frac1C+\frac{1}{C \abs{\log K}} \le \mathrm{sign}(-\log K) u+\frac{q}{\abs{\log K}} \le C+\frac{C}{\abs{\log K}}$$
		
		for all $K>0$.
		Letting $K \to 0$ yields $u>0$, but letting $K \to \infty$ yields $u<0$, a contradiction.
\end{proof}

	\bibliographystyle{amsplain}
	\bibliography{ref}

\end{document}